\newtheorem{proposition}{Proposition}
\newtheorem{theorem}{Theorem}
\newtheorem{corollary}{Corollary}
\newtheorem{lemma}{Lemma}
\newtheorem{assumption}{Assumption}
\newcommand{\vanden}{N}
\newcommand{\hatvap}{\hat p}
\DeclareMathOperator{\Var}{Var}
\DeclareMathOperator{\E}{E}
\newcommand{\nden}{n}
\newcommand{\nnum}{r}
\newcommand{\fdos}{\mu_2}
\newcommand{\funo}{\mu_1}
\newcommand{\fest}{g}
\newcommand{\Fe}{\mathcal F}
\newcommand{\Felp}{\mathcal F_{\mathrm p}}
\newcommand{\Feln}{\mathcal F_{\mathrm n}}
\newcommand{\FeA}{\mathcal F_{\mathcal R}}
\newcommand{\festa}{G}
\newcommand{\st}{\mid} % such that; conditioned on
\newcommand{\ff}[2]{#1^{(#2)}} % falling factorial
\newcommand{\diff}{\mathrm d}
\newcommand{\risk}{\eta}
\newcommand{\riskas}{\bar \eta}
\newcommand{\Minf}{M'_\mathrm{L}}
\newcommand{\Mcero}{M_\mathrm{L}}
\newcommand{\minf}{m'_\mathrm{L}}
\newcommand{\mcero}{m_\mathrm{L}}
\newcommand{\Kinf}{{K'}}
\newcommand{\Kcero}{{K}}
\newcommand{\xinf}{{x'_\mathrm L}}
\newcommand{\xcero}{{x_\mathrm L}}
\newcommand{\xinfcr}{\xi'}
\newcommand{\xcerocr}{\xi}
\newcommand{\riskasinf}{\zeta'}%%{\riskas^{\infty}}
\newcommand{\riskascero}{\zeta}%%{\riskas^{0}}
\newcommand{\riskasfs}{\zeta}
\newcommand{\numenos}{\alpha}
\newcommand{\numas}{\beta}
\newcommand{\ndenini}{u}%{n_\mathrm A}
\newcommand{\ndenfin}{v}%{n_\mathrm B}
\newcommand{\csopfin}{\sigma}
\newcommand{\epsf}{\mathrm{unif}}
\newcommand{\epsg}{\mathrm{est}}
\newcommand{\epsc}{\mathrm{cont}}
\newcommand{\epsd}{\mathrm{disc}} %sólo \theta
\newcommand{\epsi}{\mathrm{int}}
\newcommand{\epst}{\mathrm{tail}}
\newcommand{\epsa}{\mathrm{interv}}
\newcommand{\epsh}{\mathrm{const}}
\newcommand{\epscuant}{9}
\newcommand{\psiL}{T}
\newcommand{\ndenpk}{\nu_{\nden,k}}
\renewcommand\theenumii{(\@alph\c@enumii)}
\renewcommand\p@enumii{}
\begin{document}

\title{Asymptotically Optimum Estimation of a Probability in Inverse Binomial Sampling\\ under General Loss Functions}

\author{Luis Mendo \thanks{E.T.S. Ingenieros de Telecomunicaci\'on, Polytechnic University of Madrid, 28040 Madrid, Spain. E-mail: lmendo@grc.ssr.upm.es.}}
\date{\today}

\maketitle

\begin{abstract}
The optimum quality that can be asymptotically achieved in the estimation of a probability $p$ using inverse binomial sampling is addressed.
%in this paper.
A general definition of quality is used in terms of the risk associated with a loss function that satisfies certain assumptions. It is shown that the limit superior of the risk for $p$ asymptotically small has a minimum over all (possibly randomized) estimators. This minimum is achieved by certain non-randomized estimators. The model includes commonly used quality criteria as particular cases. Applications to the non-asymptotic regime are discussed considering specific loss functions, for which minimax estimators are derived.

\emph{Keywords:} Sequential estimation, Asymptotic properties, Minimax estimators, Inverse binomial sampling.
\end{abstract}

\section{Introduction}
\label{parte: intro}

The problem of sequentially estimating the probability of success, $p$, in a sequence of Bernoulli trials arises in many fields of science and engineering. A stopping rule of notable interest, first discussed by \citet{Haldane45}, is \emph{inverse binomial sampling}, which consists in observing the random sequence until a given number $\nnum$ of successes are obtained. The resulting number of trials, $\vanden$, is a sufficient statistic \citep[p.~101]{Lehmann98}, from which $p$ can be estimated. The appeal of this rule lies in the useful properties of estimators obtained from it. Namely, previous works have shown that the uniformly minimum variance unbiased estimator, given by \citep{Haldane45}
\begin{equation}
\label{eq: hatvap nnum 1 vanden 1}
\hatvap = \frac{\nnum-1}{\vanden-1},
\end{equation}
satisfies the following properties. Its normalized mean square error $\E[(\hatvap-p)^2]/p^2$ has an asymptotic value for $\nnum \geq 3$, namely $1/(\nnum-2)$; and $\E[(\hatvap-p)^2]/p^2$ is guaranteed to be smaller than this value for any $p \in (0,1)$ \citep{Mikulski76}. Similarly, the normalized mean absolute error $\E[|p-\hatvap|]/p$ is smaller than its asymptotic value, given by $2 (\nnum-1)^{\nnum-2} \exp(-\nnum+1) / (\nnum-2)!$, for any $p \in (0,1)$ and $\nnum \geq 2$ \citep{Mendo09a}. In addition, given $\funo, \fdos > 1$ and $\nnum \geq 3$, under certain conditions this estimator, as well as the modified version $\hatvap = (\nnum-1)/\vanden$, can guarantee that, for $p$ arbitrary, the random interval $[\hatvap/\funo, \hatvap\fdos]$ contains the true value $p$ with a confidence level greater than a prescribed value \citep{Mendo06,Mendo08a}.

The results mentioned apply to specific estimators, defined as functions of the sufficient statistic $\vanden$. A natural extension is to investigate whether the quality of the estimation can be improved using other estimators. The most general class is that formed by randomized estimators defined in terms of $\vanden$. This includes non-randomized estimators as a particular case. This problem is addressed by \citet{Mendo10}, using the confidence associated with a relative interval as a quality measure. It is shown that the confidence that can be guaranteed for $p$ asymptotically small has a maximum over all estimators. Moreover, non-randomized estimators are given that can guarantee this maximum confidence not only asymptotically, but also for $p \in (0,1)$ arbitrary.

A further generalization is to consider arbitrary estimators with an arbitrary definition of quality. The present paper pursues this direction, focusing on the asymptotic regime. Namely, quality is defined as the risk associated with an arbitrary loss function. The allowed loss functions are restricted only by certain regularity conditions, which are easily satisfied in practice (and which, in particular, hold for all the previously mentioned examples of quality measures). Using this general definition of quality, the asymptotic performance as $p \rightarrow 0$ of arbitrary estimators in inverse binomial sampling is analyzed. As will be seen, the quality that can be asymptotically achieved has a maximum over all estimators. Furthermore, this maximum can be accomplished using certain non-randomized estimators, whose form is explicitly given.

Section~\ref{parte: prel} contains preliminary definitions and observations required for the main results, which are presented in Section~\ref{parte: res}. Section~\ref{parte: disc} discusses these results, and considers applications in the non-asymptotic regime.
Proofs of all results are given in \ref{parte: proofs}.
% Proofs to all results are given in an extended version of this paper \citep{Mendo11_env}. \ref{parte: proofs} contains the statements of all lemmas required for the proofs. % *·*
% Appendix % *!* Poner esta palabra si "\ref" no la incluye

\section{Preliminaries}
\label{parte: prel}

The following notation will be used. Let $\ff{k}{i}$ denote $k(k-1) \cdots (k-i+1)$, for $k \in \mathbb Z$, $i \in \mathbb N$; and $\ff{k}{0}=1$. Given $\nnum \in \mathbb N$, the probability function of $\vanden$, $f(\nden) = \Pr[\vanden = \nden]$, is
\begin{equation}
\label{eq: f}
f(\nden)
= \frac{\ff{(\nden-1)}{\nnum-1}}{(\nnum-1)!} p^\nnum (1-p)^{\nden-\nnum}, \quad \nden \geq \nnum.
\end{equation}
The upper and lower (not normalized) incomplete gamma functions are respectively denoted as
\begin{align}
\label{eq: Gamma}
\Gamma(s,u) & = \int_u^\infty \tau^{s-1}\exp(-\tau) \,\diff \tau, \\
\label{eq: gamma}
\gamma(s,u) & = \int_0^u \tau^{s-1}\exp(-\tau) \,\diff \tau = \Gamma(s) - \Gamma(s,u).
\end{align}
In addition, the functions $\phi(\nu)$ and $\psi(x,\Omega)$ are defined as
\begin{align}
\label{eq: phi}
\phi(\nu) & = \frac{\nu^{\nnum-1} \exp(-\nu)}{(\nnum-1)!}, \quad \nu \in \mathbb R^+, \\
\label{eq: psi}
\psi(x,\Omega) & = \frac{\Omega^\nnum \exp(-\Omega/x)}{x^{\nnum+1} (\nnum-1)!}, \quad x, \Omega \in \mathbb R^+.
\end{align}

Given a function $h$, the one-sided limits $\lim_{x \rightarrow a^-}h(x)$ and $\lim_{x \rightarrow a^+}h(x)$ are respectively denoted as $h(a-)$ and $h(a+)$. Given two functions $h_1, h_2: \mathbb R^+ \mapsto \mathbb R^+ \cup \{0\}$, $h_1(x)$ is $O(h_2(x))$ as $x \rightarrow \infty$ (respectively as $x \rightarrow 0$) if and only if there exist $a, M \in \mathbb R^+$ such that $h_1(x) \leq M h_2(x)$ for all $x \geq a$ (respectively for all $x \leq a$). Similarly, $h_1(x)$ is $\Theta(h_2(x))$ as $x \rightarrow \infty$ (respectively as $x \rightarrow 0$) if and only if there exist $a, m, M \in \mathbb R^+$ such that $m h_2(x) \leq h_1(x) \leq M h_2(x)$ for all $x \geq a$ (respectively for all $x \leq a$).

The quality of an estimator $\hatvap$ is measured by the \emph{risk} (expected loss) $\risk = \E[L(\hatvap/p)]$ associated with a non-negative \emph{loss function} $L: \mathbb R^+ \mapsto \mathbb R^+ \cup \{0\}$, provided that this expectation exists. The function $L$ is defined in terms of $\hatvap/p$, rather than $\hatvap$. This is motivated by the fact that a given error value is most meaningful when compared with $p$, and therefore commonly used quality measures are most often \emph{normalized} ones.

The loss function is assumed to satisfy the following.

\begin{assumption}
\label{assum: L var acot}
%\label{assum item: var acot}
For any $x_1,x_2 \in \mathbb R^+$ with $x_2>x_1$, $L$ is of bounded variation on $[x_1,x_2]$.
\end{assumption}

\begin{assumption}
\label{assum: L disc fin compacto}
%\label{assum item: disc fin compacto}
For any $x_1,x_2 \in \mathbb R^+$ with $x_2>x_1$, $L$ has a finite number of discontinuities in $[x_1,x_2]$.
\end{assumption}

\begin{assumption}
\label{assum: L cota O}
The loss function has the following asymptotic behaviour:
\begin{enumerate}
\item
\label{assum item: cota O, cero}
There exists $\Kcero \in \mathbb R$ such that $L(x)$ is $O(x^\Kcero)$ as $x \rightarrow 0$.
\item
\label{assum item: cota O, inf}
There exists $\Kinf<\nnum$ such that $L(x)$ is $O(x^\Kinf)$ as $x \rightarrow \infty$.
\end{enumerate}
\end{assumption}

These restrictions are very mild. Note that the loss function $L$ is not required to be convex, or continuous; however, being of bounded variation implies that its discontinuities can only be jumps or removable discontinuities, i.e.~$L$ has left-hand and right-hand limits at every point of its domain, and these limits are finite \citep[corollary 2.7.3]{Carter00}. All quality measures mentioned in Section~\ref{parte: intro} can be expressed in terms of functions of $x = \hatvap/p$ for which Assumptions~\ref{assum: L var acot}--\ref{assum: L cota O}
% *!* \ref{assum: L var acot}, \ref{assum: L disc fin compacto} and \ref{assum: L cota O}
hold. Namely, $L(x) = (x-1)^2$ corresponds to normalized mean square error; $L(x) = |x-1|$ to normalized mean absolute error; and given $\funo, \fdos > 1$,
\begin{equation}
\label{eq: L conf}
L(x) =
\begin{cases}
0 & \text{if } x \in [1/\fdos, \funo], \\
1 & \text{otherwise}
\end{cases}
\end{equation}
corresponds to $1$ minus the confidence associated with a relative interval $[p/\fdos, p\funo]$.

Since $\vanden$ is a sufficient statistic, for any estimator defined in terms of the observed sequence of Bernoulli variables for which $\E[L(\hatvap/p)]$ exists, there is a possibly randomized estimator expressed only in terms of $\vanden$ that has the same risk \citep[p.~33]{Lehmann98}. Therefore, attention can be restricted to estimators that depend on the observations through $\vanden$ only; however, randomized estimators need to be considered in addition to non-randomized ones.

The set of all functions from $\{\nnum, \nnum+1, \nnum+2,\ldots\}$ to $\mathbb R^+$ is denoted as $\Fe$. A \emph{non-randomized estimator} $\hatvap$ is defined as $\hatvap = \fest(\vanden)$, with $\fest \in \Fe$. A \emph{randomized estimator} is a positive random variable $\hatvap$ whose distribution depends on the value of $\vanden$. The distribution function of $\hatvap$ conditioned on $\vanden = \nden$ will be denoted as $\Pi_\nden$. The randomized estimator is completely specified by the functions $\Pi_\nden$, $\nden \geq \nnum$. Denoting by $\FeA$ the class of all functions from $\{\nnum, \nnum+1, \nnum+2,\ldots\}$ to the set of distribution functions, a randomized estimator is defined by a function $\festa \in \FeA$ that to each $\nden$ assigns $\Pi_\nden$. Clearly, non-randomized estimators form a subset of the class of randomized estimators. Throughout the paper, when referring to an arbitrary estimator without specifying its type, the general class of randomized estimators (including non-randomized ones) will be meant.

The risk will be explicitly denoted in the sequel as a function of $p$, that is, $\risk(p)$. For a non-randomized estimator defined by $\fest \in \Fe$, the risk $\risk(p)$ is given by
\begin{equation}
\label{eq: risk non-rand}
\risk(p) = \sum_{\nden=\nnum}^\infty f(\nden) L(\fest(\nden)/p).
\end{equation}
Depending on $L$, $\fest$ and $p$, this series may be convergent or not; however, boundedness of $\fest$ is sufficient to ensure that the series converges for all $L$ satisfying Assumptions~\ref{assum: L var acot}--\ref{assum: L cota O}
% *!* \ref{assum: L var acot}, \ref{assum: L disc fin compacto} and \ref{assum: L cota O}
and for all $p$. In general, for possibly randomized estimators,
\begin{equation}
\label{eq: risk gen}
\risk(p) = \sum_{\nden=\nnum}^\infty f(\nden) \int_0^\infty L (y/p) \,\diff \Pi_\nden(y),
\end{equation}
where the integral is defined in the Lebesgue-Stieltjes sense. Assumptions~\ref{assum: L var acot}--\ref{assum: L cota O}
% *!* \ref{assum: L var acot}, \ref{assum: L disc fin compacto} and \ref{assum: L cota O}
assure that this integral always exists; however, it may be finite or infinite. Besides, even if it is finite for a given $p$ and for all $\nden$, the series in \eqref{eq: risk gen} does not necessarily converge for that $p$. According to this, for an arbitrary estimator and for $p$ given, $\risk(p)$ may be finite or infinite; however, there exist estimators that have a finite risk for all $p$.

An arbitrary estimator may not have an asymptotic risk, i.e.~$\lim_{p \rightarrow 0} \risk(p)$ need not exist in general. Therefore, the asymptotic behaviour of an estimator should be characterized by $\limsup_{p \rightarrow 0} \risk(p)$. The significance of the limit superior lies in the fact that it is the smallest value such that any greater number is asymptotically an upper bound of $\risk(p)$. That is, given any $\risk_0>\limsup_{p \rightarrow 0} \risk(p)$, there exists $\delta>0$ such that $\risk(p) < \risk_0$ for all $p < \delta$; and no such $\delta$ can be found for $\risk_0<\limsup_{p \rightarrow 0}  \risk(p)$.\footnote{For $\risk_0=\limsup_{p \rightarrow 0} \risk(p)$ the result may hold or not depending on the estimator and loss function; for example, it holds for \eqref{eq: hatvap nnum 1 vanden 1} and normalized mean square error, as mentioned in Section~\ref{parte: intro}, whereas it obviously does not hold for a constant loss function.}

According to the preceding discussion, a desirable asymptotic property of an estimator is that it achieves a low value of $\limsup_{p\rightarrow 0} \risk(p)$. In order to characterize how low this value can be, the infimum of $\limsup_{p\rightarrow 0} \risk(p)$ over all estimators should be determined. A related question is whether there is an estimator that can attain this infimum. As will be seen, the answer to this question is affirmative, that is, the infimum is also a minimum. This implies that there exist optimum estimators from the point of view of asymptotic behaviour; moreover, they can be found within the class of non-randomized estimators, as will also be shown. To obtain these results, the following approach will be used. It will be first established that for a certain subclass of non-randomized estimators, $\lim_{p \rightarrow 0} \risk(p)$ exists and can be easily computed. Secondly, it will be proved that $\lim_{p \rightarrow 0} \risk(p)$ has a minimum  value over the referred subclass. Thirdly, this minimum will be shown to coincide with the unrestricted minimum of $\limsup_{p\rightarrow 0} \risk(p)$ over the class of arbitrary estimators.

\section{Main results}
\label{parte: res}

For a given loss function $L$, the set of all functions $\fest \in \Fe$ such that $\lim_{p \rightarrow 0} \risk(p)$ exists for $\hatvap = \fest(\nden)$ is denoted as $\Felp$. The set of functions $\fest \in \Fe$ for which $\lim_{\nden \rightarrow \infty} \nden \fest(\nden)$ exists, is finite and non-zero is denoted as $\Feln$. Observe that the definition of $\Felp$ generalizes that given by \citet{Mendo10}, which assumes a specific loss function, namely \eqref{eq: L conf}. The result in Theorem~\ref{teo: Feln incl Felp} to follow establishes that $\Feln \subseteq \Felp$, and explicitly gives $\lim_{p \rightarrow 0} \risk(p)$. For any $\fest \in \Feln$ with $\lim_{\nden \rightarrow \infty} \nden \fest(\nden) = \Omega$, let
\begin{equation}
\label{eq: riskas nu}
\riskas = \int_0^\infty \phi(\nu) L( \Omega/\nu) \,\diff \nu.
\end{equation}
Equivalently, $\riskas$ can be expressed as
\begin{equation}
\label{eq: riskas x}
\riskas = \int_0^\infty \psi(x,\Omega) L(x) \,\diff x
\end{equation}
by means of the change of variable $\nu = \Omega/x$ (both expressions are used in the proofs of the results to be presented). By Assumptions~\ref{assum: L var acot} and \ref{assum: L cota O}, these integrals exist as improper Riemann integrals, and have a finite value. It should be observed (and is exploited in the proofs) that they can also be interpreted as Lebesgue integrals \citep[theorem 10.33]{Apostol74}.

\begin{theorem}
\label{teo: Feln incl Felp}
Consider $\nnum \in \mathbb N$.
For any loss function satisfying Assumptions~\ref{assum: L var acot}--\ref{assum: L cota O},
% *!* \ref{assum: L var acot}, \ref{assum: L disc fin compacto} and \ref{assum: L cota O}
and for any non-randomized estimator defined by a function $\fest \in \Feln$, the limit $\lim_{p \rightarrow 0} \risk(p)$ exists and equals $\riskas$ given by \eqref{eq: riskas nu} (or \eqref{eq: riskas x}).
\end{theorem}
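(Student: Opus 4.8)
The plan is to recast the series for $\risk(p)$ in \eqref{eq: risk non-rand} as the Lebesgue integral of a step function and let $p\to0$ by dominated convergence, the limiting integrand being $\phi(\nu)L(\Omega/\nu)$ so that the limit equals $\riskas$ in the form \eqref{eq: riskas nu}. Fix $\fest\in\Feln$ and write $\Omega_\nden=\nden\fest(\nden)$, so that $\Omega_\nden\to\Omega\in\mathbb R^+$ and $\fest(\nden)/p=\Omega_\nden/(\nden p)$; note $\fest$ is then bounded (as $\nden\fest(\nden)$ converges), so by the remark following \eqref{eq: risk non-rand} the series converges for every $p$. For $p\in(0,1)$ define $F_p:\mathbb R^+\to\mathbb R^+\cup\{0\}$ by $F_p(\nu)=f(\nden)L(\Omega_\nden/(\nden p))/p$ for $\nu\in[\nden p,(\nden+1)p)$, $\nden\geq\nnum$, and $F_p(\nu)=0$ for $\nu<\nnum p$; then $\int_0^\infty F_p(\nu)\,\diff\nu=\sum_{\nden\geq\nnum}f(\nden)L(\fest(\nden)/p)=\risk(p)$ by \eqref{eq: risk non-rand}. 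I would fix an integer $\nden_0\geq\max\{\nnum,2\}$ large enough that $\Omega/2\leq\Omega_\nden\leq2\Omega$ for all $\nden\geq\nden_0$, and split $\risk(p)=\int_0^{\nden_0 p}F_p+\int_{\nden_0 p}^\infty F_p$. The first term is the finite sum $\sum_{\nden=\nnum}^{\nden_0-1}f(\nden)L(\fest(\nden)/p)$, where $f(\nden)=O(p^\nnum)$ while $\fest(\nden)/p\to\infty$ makes $L(\fest(\nden)/p)=O(p^{-\Kinf})$ by the second part of Assumption~\ref{assum: L cota O}; thus each term is $O(p^{\nnum-\Kinf})$ and, since $\Kinf<\nnum$, the whole sum tends to $0$. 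It remains to show $\int_0^\infty F_p(\nu)\,\mathbf 1\{\nu\geq\nden_0 p\}\,\diff\nu\to\riskas$ by dominated convergence.

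\emph{Pointwise limit.} Fix $\nu>0$ and let $p\to0$; then $\nden:=\lfloor\nu/p\rfloor\to\infty$, $\nden p\to\nu$ and $\Omega_\nden\to\Omega$. An elementary manipulation of \eqref{eq: f} gives $f(\nden)/p=\phi(\nden p)\,\rho(\nden,p)$ with $\rho(\nden,p)=\prod_{i=1}^{\nnum-1}(1-i/\nden)\exp\!\big(\nnum p-(\nden-\nnum)\sum_{j\geq2}p^j/j\big)$; since $\prod_{i=1}^{\nnum-1}(1-i/\nden)\to1$ and $0\leq(\nden-\nnum)\sum_{j\geq2}p^j/j\leq\nden p^{2}/(1-p)\leq\nu p/(1-p)\to0$, we get $\rho(\nden,p)\to1$ and hence $f(\nden)/p\to\phi(\nu)$ by continuity of $\phi$. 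If, in addition, $\Omega/\nu$ is a point of continuity of $L$, then $L(\Omega_\nden/(\nden p))\to L(\Omega/\nu)$; by Assumption~\ref{assum: L disc fin compacto} the set of $\nu$ failing this is countable. Hence $F_p(\nu)\,\mathbf 1\{\nu\geq\nden_0 p\}\to\phi(\nu)L(\Omega/\nu)$ for almost every $\nu>0$.

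\emph{Domination.} From $\prod_{i=1}^{\nnum-1}(1-i/\nden)\leq1$ and $\rho(\nden,p)\leq e^{\nnum p}$ we get $f(\nden)\leq2p\,\phi(\nden p)$ for all $\nden\geq\nnum$ whenever $p\leq(\ln2)/\nnum$; moreover, with $m=p\lfloor\nu/p\rfloor\in(\nu-p,\nu]$, the identity $\phi(m)/\phi(\nu)=(m/\nu)^{\nnum-1}e^{\nu-m}$ gives $\phi(\nden p)\leq e\,\phi(\nu)$ for $p\leq1$. If $\nu\geq\nden_0 p$ then $\nden\geq\nden_0$, so $\Omega_\nden\in[\Omega/2,2\Omega]$ and $\nden p>\nu-p\geq\nu/2$, whence $\Omega_\nden/(\nden p)\in[\Omega/(2\nu),4\Omega/\nu]$ and $L(\Omega_\nden/(\nden p))\leq\Lambda(\nu):=\sup\{L(x):\Omega/(2\nu)\leq x\leq4\Omega/\nu\}$, finite because $L$ is bounded on compacta by Assumption~\ref{assum: L var acot}. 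Therefore $F_p(\nu)\,\mathbf 1\{\nu\geq\nden_0 p\}\leq2e\,\phi(\nu)\Lambda(\nu)=:G(\nu)$ for every $\nu$ and every $p\leq\min\{1,(\ln2)/\nnum\}$. Finally $G$ is integrable on $\mathbb R^+$: $\Lambda$ is bounded on compact subsets; as $\nu\to\infty$ the interval $[\Omega/(2\nu),4\Omega/\nu]$ contracts to $0$, so by the first part of Assumption~\ref{assum: L cota O} $\Lambda(\nu)$ grows at most polynomially and $\phi(\nu)\Lambda(\nu)$ decays exponentially; as $\nu\to0$ that interval escapes to $\infty$, so by the second part of Assumption~\ref{assum: L cota O} $\Lambda(\nu)=O(\nu^{-\Kinf})$ and $\phi(\nu)\Lambda(\nu)=O(\nu^{\nnum-1-\Kinf})$, integrable near $0$ precisely because $\Kinf<\nnum$. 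Dominated convergence then gives $\int_0^\infty F_p(\nu)\,\mathbf 1\{\nu\geq\nden_0 p\}\,\diff\nu\to\int_0^\infty\phi(\nu)L(\Omega/\nu)\,\diff\nu=\riskas$; together with the vanishing of the first term this yields $\risk(p)\to\riskas$, so in particular $\fest\in\Felp$ and $\Feln\subseteq\Felp$.

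The crux is the construction of the $p$-uniform integrable majorant $G$: it forces one to dominate $f(\nden)/p$ by a constant multiple of $\phi(\nu)$ simultaneously for \emph{all} $\nden\geq\nnum$ and all small $p$ (not merely over a "bulk" range of $\nden$), which the elementary bounds on the falling factorial and on $(1-p)^{\nden-\nnum}$ provide, and it is in checking integrability of $\phi\cdot\Lambda$ near $\nu=0$ that the hypothesis $\Kinf<\nnum$ is genuinely needed. The possible discontinuities of $L$, by contrast, cause no trouble here, since they affect only a Lebesgue-null set of values of $\nu$ and almost-everywhere convergence is all dominated convergence requires. A more computational alternative would split the sum for $\risk(p)$ into head, bulk and tail, estimating the bulk by a Riemann sum for $\phi(\nu)L(\Omega/\nu)$ and the head and tail by comparison with moments of $\vanden$, but the route above seems cleaner and makes the role of each assumption transparent.
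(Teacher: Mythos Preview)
Your proof is correct, and it takes a genuinely different route from the paper's. The paper follows exactly the ``more computational alternative'' you describe in your closing paragraph: it fixes a sequence $p_k\to0$, splits the sum \eqref{eq: risk non-rand} into four pieces---a finite head $\risk^k_1$ over $\nnum\leq\nden<\nden_\epsg$, an early piece $\risk^k_2$ over $\nden_\epsg\leq\nden<\lfloor\numenos/p_k\rfloor$, a bulk $\risk^k_0$ over $\lfloor\numenos/p_k\rfloor\leq\nden\leq\lceil\numas/p_k\rceil$, and a tail $\risk^k_3$---and then drives each non-bulk piece below $\epsilon_0/\epscuant$ by explicit estimates while matching the bulk to a Riemann sum for $\int_\numenos^\numas\phi(\nu)L(\Omega/\nu)\,\diff\nu$. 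To make the Riemann-sum argument go through, the paper must cope with the discontinuities of $L$ by hand (decomposing $L=L_{\mathrm c}+L_{\mathrm d}$ on a compact interval and controlling the fraction $\chi_k$ of indices landing near jump points), and must invoke the uniform convergence $\Phi(p_k,\nu)\to\phi(\nu)$ on compacta (Lemma~\ref{lemma: Phi conv unif}). The resulting argument is elementary but long, with nine interacting $\epsilon$-parameters.

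Your dominated-convergence approach compresses all of this. By building the single integrable majorant $G=2e\,\phi\,\Lambda$ you replace the separate bulk/tail estimates by one appeal to DCT, and the discontinuities of $L$ are absorbed for free since almost-everywhere convergence suffices. The trade-off is that you must produce a $p$-uniform pointwise bound $f(\nden)/p\leq 2e\,\phi(\nu)$ valid for \emph{all} $\nden\geq\nnum$, which you obtain from the crude inequalities $\prod(1-i/\nden)\leq1$ and $(1-p)^{\nden-\nnum}\leq e^{-\nden p}e^{\nnum p}$; the paper, working only on a bounded $\nu$-window, can instead use the sharper uniform convergence of Lemma~\ref{lemma: Phi conv unif}. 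Both routes use Assumption~\ref{assum: L cota O}\ref{assum item: cota O, inf} in the same essential way---your integrability of $\phi\Lambda$ near $\nu=0$ is exactly where $\Kinf<\nnum$ enters, paralleling the paper's bound on $\risk^k_2$---and both dispose of the finite head identically. Your version is shorter and makes the role of each hypothesis more transparent; the paper's is more self-contained in that it avoids measure-theoretic machinery.
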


According to this, the asymptotic risk of an estimator defined by any function $\fest \in \Feln$ depends on this function only through $\Omega$, i.e.~only the asymptotic behaviour of $\fest$ matters. Furthermore, under an additional assumption, it can be shown that the asymptotic risk is a $C^1$ function of $\Omega$.

\setcounter{assumption}{1} % *!*
{ % Beginning of scope for the style redefinition
\makeatletter
\renewcommand\theassumption{\@arabic\c@assumption'}
\makeatother
\begin{assumption}
\label{assum: L disc fin}
%\label{assum item: disc fin}
$L$ has a finite number of discontinuities in $\mathbb R^+$.
\end{assumption}
} % End of scope for the style redefinition

It is evident that
% the property~\ref{assum item: disc fin} in
Assumption~\ref{assum: L disc fin} implies
% \ref{assum item: disc fin compacto} in
Assumption~\ref{assum: L disc fin compacto}. While more restrictive,
% \ref{assum item: disc fin}
Assumption~\ref{assum: L disc fin} is satisfied by a large class of loss functions, including the mentioned examples.

\begin{proposition}
\label{prop: riskas C1 Omega}
Given $\nnum \in \mathbb N$, a loss function satisfying Assumptions~\ref{assum: L var acot}, \ref{assum: L disc fin} and \ref{assum: L cota O}, and an estimator defined by a function $\fest \in \Feln$, the asymptotic risk $\riskas$ is a $C^1$ function of $\Omega \in \mathbb R^+$, with
\begin{equation}
\label{eq: der riskas}
\frac{\diff \riskas}{\diff \Omega} =
\int_0^\infty \frac{\partial \psi(x,\Omega)}{\partial \Omega} L(x) \,\diff x.
\end{equation}
Denoting by ${\riskas|}_\nnum$ the asymptotic risk corresponding to $\Omega$ and $\nnum$ given, this derivative can be expressed as
\begin{equation}
\label{eq: der riskas Omega}
\frac{\diff {\riskas|}_\nnum}{\diff \Omega} = \frac {\nnum ({\riskas|}_\nnum - {\riskas|}_{\nnum+1})} \Omega.
\end{equation}
\end{proposition}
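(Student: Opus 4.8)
The plan is to obtain \eqref{eq: der riskas} by differentiating the $\psi$-representation \eqref{eq: riskas x} under the integral sign; this is the convenient representation because $\Omega$ enters \eqref{eq: riskas x} only through the smooth factor $\psi(x,\Omega)$, whereas in \eqref{eq: riskas nu} it appears inside $L$, which need not be differentiable. A direct computation from \eqref{eq: psi} gives
\begin{equation*}
\frac{\partial \psi(x,\Omega)}{\partial \Omega}
= \psi(x,\Omega)\left(\frac{\nnum}{\Omega}-\frac1x\right)
= \frac{\nnum}{\Omega}\,\psi(x,\Omega) - \frac1x\,\psi(x,\Omega),
\end{equation*}
and, since $x^{-1}\psi(x,\Omega) = (\nnum/\Omega)\,\Omega^{\nnum+1}\exp(-\Omega/x)/\bigl(x^{\nnum+2}\,\nnum!\bigr)$, the last term equals $\nnum/\Omega$ times the function obtained from $\psi$ upon replacing $\nnum$ by $\nnum+1$. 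Integrating this identity against $L$ and recognising the two resulting integrals as ${\riskas|}_\nnum$ and ${\riskas|}_{\nnum+1}$ (the latter finite because $\Kinf<\nnum<\nnum+1$, so that Assumptions~\ref{assum: L var acot}, \ref{assum: L disc fin} and~\ref{assum: L cota O} apply verbatim with $\nnum+1$ in place of $\nnum$) will yield \eqref{eq: der riskas Omega} once the interchange of differentiation and integration in \eqref{eq: der riskas} is justified. Thus the whole proof reduces to that justification, together with continuity of the resulting derivative.

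For the interchange I would fix an arbitrary compact interval $[\Omega_1,\Omega_2]\subset\mathbb R^+$ (it suffices to prove the claims there, as $\Omega_1,\Omega_2$ are arbitrary and both properties are local) and exhibit a single function $D\colon(0,\infty)\to\mathbb R^+\cup\{0\}$, integrable over $(0,\infty)$, with
\begin{equation*}
\sup_{\Omega\in[\Omega_1,\Omega_2]}\left|\frac{\partial\psi(x,\Omega)}{\partial\Omega}\right| L(x)\;\le\;D(x),\qquad x\in(0,\infty).
\end{equation*}
Using $|\partial_\Omega\psi(x,\Omega)| \le (\nnum/\Omega_1+1/x)\,\psi(x,\Omega)$ together with the explicit form of $\psi$, such a $D$ is integrable for the following reasons. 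As $x\to\infty$ one has $\psi(x,\Omega)=O(x^{-(\nnum+1)})$ uniformly in $\Omega\in[\Omega_1,\Omega_2]$ (because $\exp(-\Omega/x)\le1$), while $L(x)=O(x^\Kinf)$ with $\Kinf<\nnum$ by Assumption~\ref{assum: L cota O}\ref{assum item: cota O, inf}; hence the product decays like $x^{\Kinf-\nnum-1}$, with exponent strictly below $-1$, and is integrable near $\infty$. As $x\to0$ the factor $\exp(-\Omega/x)\le\exp(-\Omega_1/x)$ decays faster than every power of $x$, and since $L(x)=O(x^\Kcero)$ by Assumption~\ref{assum: L cota O}\ref{assum item: cota O, cero}, the product $\partial_\Omega\psi(x,\Omega)L(x)$ is bounded near $0$ uniformly in $\Omega$. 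On any interval bounded away from $0$ and $\infty$, $L$ is bounded (Assumption~\ref{assum: L var acot}) and $\psi$, $\partial_\Omega\psi$ are continuous, so $D$ is bounded there as well. That $L$ has only finitely many discontinuities (Assumption~\ref{assum: L disc fin}) serves, exactly as in the discussion preceding Theorem~\ref{teo: Feln incl Felp}, to render $D$ and the integrands legitimate both as improper Riemann integrals and as Lebesgue integrals, so that dominated convergence applies.

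With such a $D$ available, the mean value theorem applied to $\Omega\mapsto\psi(x,\Omega)$ shows that, for $\Omega,\Omega'\in[\Omega_1,\Omega_2]$, the difference quotient of the integrand $\psi(x,\cdot)L(x)$ is bounded in absolute value by $D(x)$; since it converges pointwise in $x$ to $\partial_\Omega\psi(x,\Omega)L(x)$ as $\Omega'\to\Omega$, dominated convergence yields $\diff\riskas/\diff\Omega = \int_0^\infty \partial_\Omega\psi(x,\Omega)L(x)\,\diff x$, which is \eqref{eq: der riskas}. A second application of dominated convergence, now using continuity of $\Omega\mapsto\partial_\Omega\psi(x,\Omega)$ and the same bound $D$, shows this derivative is continuous on $[\Omega_1,\Omega_2]$; hence $\riskas$ is a $C^1$ function of $\Omega\in\mathbb R^+$. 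Finally, substituting the identity for $\partial_\Omega\psi$ from the first paragraph into \eqref{eq: der riskas} and splitting the integral gives \eqref{eq: der riskas Omega}.

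I expect the main obstacle to be the uniform domination as $x\to\infty$: there $\psi$ decays only polynomially, so the argument genuinely needs the strict inequality $\Kinf<\nnum$ of Assumption~\ref{assum: L cota O}\ref{assum item: cota O, inf}, and needs the supremum over the compact $\Omega$-interval to be taken \emph{before} integrating, rather than estimating pointwise in $\Omega$ and then integrating. The behaviour near $x=0$ is comparatively harmless, the exponential factor $\exp(-\Omega/x)$ overwhelming every polynomial growth permitted for $L$.
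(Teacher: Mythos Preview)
Your argument is correct and takes a genuinely different route from the paper. The paper decomposes $\riskas=\sum_{i=0}^{D}\riskas_i$ by splitting the integration domain at the $D$ discontinuity points $x_1<\cdots<x_D$ of $L$ (this is where Assumption~\ref{assum: L disc fin} is actually used). On each finite piece $[x_i,x_{i+1}]$ it redefines $L$ at the endpoints so that the integrand is continuous, and then invokes a classical Leibniz-type theorem (Fleming) for differentiation under the integral sign of a continuous integrand with continuous $\partial/\partial\Omega$. The endpoint pieces $(0,x_1)$ and $(x_D,\infty)$ are handled separately: near $0$ the integrand is extended continuously to $x=0$; near $\infty$ the factor $\Omega^{\nnum}$ is pulled out so that the remaining integrand and its $\Omega$-derivative have $\Omega$-independent integrable bounds \eqref{eq: cota psi}--\eqref{eq: cota der psi}. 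Formula~\eqref{eq: der riskas Omega} is then read off from \eqref{eq: psi}, \eqref{eq: riskas x} and \eqref{eq: der riskas}, just as you do.

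Your approach avoids the piecewise splitting entirely by working directly in the Lebesgue framework: a single dominating function $D(x)$, uniform over a compact $\Omega$-interval, plus the mean value theorem and dominated convergence. This is cleaner and shows that Assumption~\ref{assum: L disc fin} is not really essential for the result---measurability of $L$ (already implied by Assumption~\ref{assum: L var acot}) suffices for your argument. One small inaccuracy: you attribute the Riemann/Lebesgue equivalence to Assumption~\ref{assum: L disc fin}, but the discussion preceding Theorem~\ref{teo: Feln incl Felp} bases it on Assumptions~\ref{assum: L var acot} and~\ref{assum: L cota O}; this does not affect the validity of your proof. The paper's route, by contrast, stays closer to elementary real analysis (no dominated convergence theorem), at the cost of the case analysis.
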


Within the restricted class of non-randomized estimators defined by $\Feln$, it is natural to search for values of $\Omega$ that yield low values of the asymptotic risk $\riskas$. Depending on the loss function, there may be or not an optimum value of $\Omega \in \mathbb R^+$, in the sense of minimizing $\riskas$. Theorem~\ref{teo: min resp Omega} to follow establishes that, under certain additional hypotheses (represented by Assumption~\ref{assum: L crec}), $\riskas$ indeed has a minimum with respect to $\Omega$.

\setcounter{assumption}{3} %*!*
\begin{assumption}
\label{assum: L crec}
The loss function satisfies the following properties:
\begin{enumerate}
\item
\label{assum item: bien izq}
There exists $\xcerocr \in \mathbb R^+$ such that $L$ is non-increasing on $(0,\xcerocr)$ and
\begin{equation}
\label{eq: cond 0, integral}
\int_{\xcerocr}^\infty \frac{L(\xcerocr)-L(x)}{x^{\nnum+1}} \,\diff x > 0.
\end{equation}
\item
\label{assum item: bien der}
There exists $\xinfcr \in \mathbb R^+$ such that $L$ is non-decreasing on $(\xinfcr,\infty)$ and one of these conditions holds:
\begin{enumerate}
\item
\label{assum item item: i}
$L(\xinfcr-) < L(\xinfcr+)$.
\item
\label{assum item item: ii}
There is $t \in \mathbb N$ such that $L$ is of class $C^t$ on an interval containing $\xinfcr$ and
\begin{align}
\left. \frac{\diff^i L}{\diff x^i} \right|_{x=\xinfcr} & = 0 \quad \text{for } i=1,2,\ldots,t-1, \\
(-1)^{t-1} \left. \frac{\diff^t L}{\diff x^t} \right|_{x=\xinfcr} & > 0.
\end{align}
\end{enumerate}
\end{enumerate}
\end{assumption}

The next proposition gives a sufficient condition that may help in assessing whether a given loss function satisfies property~\ref{assum item: bien izq} in Assumption~\ref{assum: L crec}.

\begin{proposition}
\label{prop: assum item bien izq, lim}
If there exist $A \in \mathbb R$ and $B, s$ such that
\begin{equation}
\label{eq: cond 0, lim}
\lim_{x \rightarrow 0} \frac{L(x) - A}{x^s} = B \quad \text{with } Bs < 0,\ s < \nnum,
%B < 0 < s < \nnum \text { or with } s < 0 < B
\end{equation}
inequality \eqref{eq: cond 0, integral} holds for some $\xcerocr \in \mathbb R^+$.
\end{proposition}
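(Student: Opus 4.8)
The plan is to turn \eqref{eq: cond 0, integral} into a one‑parameter asymptotic estimate by rescaling the integration variable. The substitution $x=\xcerocr u$ gives
\[
\int_{\xcerocr}^\infty \frac{L(\xcerocr)-L(x)}{x^{\nnum+1}} \,\diff x
= \xcerocr^{-\nnum}\int_1^\infty \frac{L(\xcerocr)-L(\xcerocr u)}{u^{\nnum+1}} \,\diff u ,
\]
so, since $\xcerocr^{-\nnum}>0$, it will suffice to show that the integral on the right is positive for some sufficiently small $\xcerocr\in\mathbb R^+$. Hypothesis \eqref{eq: cond 0, lim} is precisely the statement that $L(x)=A+Bx^s+R(x)$ with $R(x)=o(x^s)$ as $x\rightarrow 0$, and I would substitute this into the integrand, writing $L(\xcerocr)-L(\xcerocr u)=B\xcerocr^s(1-u^s)+R(\xcerocr)-R(\xcerocr u)$, treating the $B$‑term as the leading contribution and the $R$‑terms as a remainder.

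For the leading term,
\[
B\xcerocr^s\int_1^\infty\frac{1-u^s}{u^{\nnum+1}}\,\diff u
= B\xcerocr^s\left(\frac{1}{\nnum}-\frac{1}{\nnum-s}\right)
= \frac{-Bs}{\nnum(\nnum-s)}\,\xcerocr^s ,
\]
where the integral $\int_1^\infty u^{s-\nnum-1}\,\diff u$ converges precisely because $s<\nnum$. This leading term is strictly positive, since $-Bs>0$ (as $Bs<0$) and $\nnum-s>0$. The whole argument then reduces to checking that this term dominates the remainder, i.e.\ that $R(\xcerocr)/\nnum-\int_1^\infty \frac{R(\xcerocr u)}{u^{\nnum+1}}\,\diff u=o(\xcerocr^s)$ as $\xcerocr\rightarrow 0$.

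The piece $R(\xcerocr)/\nnum$ is $o(\xcerocr^s)$ straight from the hypothesis. The remainder integral is the part I expect to be the main obstacle, since its range is unbounded whereas the estimate $R(x)=o(x^s)$ holds only near $0$; I would handle it by splitting the range at $u=\delta/\xcerocr$. On $[1,\delta/\xcerocr]$ one has $\xcerocr u\le\delta$, so, for any prescribed $\epsilon>0$, taking $\delta$ from the definition of $o(x^s)$ gives $|R(\xcerocr u)|\le\epsilon(\xcerocr u)^s$, and this portion contributes at most $\epsilon\xcerocr^s\int_1^\infty u^{s-\nnum-1}\,\diff u=\epsilon\xcerocr^s/(\nnum-s)$. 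On $[\delta/\xcerocr,\infty)$ one uses that $R(x)=L(x)-A-Bx^s$ is $O(x^q)$ for some exponent $q<\nnum$ as $x\rightarrow\infty$ (one may take $q=\max\{\Kinf,s,0\}$, using property~\ref{assum item: cota O, inf} of Assumption~\ref{assum: L cota O} and $s<\nnum$, together with the fact that $L$ is bounded on compact intervals by Assumption~\ref{assum: L var acot}); this tail then contributes $O(\xcerocr^\nnum)$, which is again $o(\xcerocr^s)$ because $s<\nnum$. Since $\epsilon$ is arbitrary, the whole remainder is $o(\xcerocr^s)$, so that $\int_1^\infty\frac{L(\xcerocr)-L(\xcerocr u)}{u^{\nnum+1}}\,\diff u=\bigl(-Bs/(\nnum(\nnum-s))+o(1)\bigr)\xcerocr^s$, which is positive for $\xcerocr$ small enough; multiplying by $\xcerocr^{-\nnum}$ yields \eqref{eq: cond 0, integral}. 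It is worth noting that this single estimate covers uniformly both sign configurations allowed by $Bs<0$: the case $B>0$, $s<0$, where $\xcerocr^s\rightarrow\infty$, and the case $B<0$, $0<s<\nnum$, where $\xcerocr^s\rightarrow 0$ but more slowly than $\xcerocr^\nnum$.
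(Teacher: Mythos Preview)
Your proof is correct and follows essentially the same strategy as the paper's: expand $L(x)=A+Bx^s+o(x^s)$, isolate the dominant contribution, and bound the remainder by splitting the integration range at the scale where the $o(x^s)$ estimate is valid. The paper works directly on $\int_{\xcerocr}^\infty$, splitting at a fixed $\delta$ and showing that the leading piece behaves like $-Bs/(2\nnum(\nnum-s)\xcerocr^{\nnum-s})$, which blows up and dominates a $\xcerocr$\nobreakdash-independent constant $C$ collecting all other contributions. Your rescaling $x=\xcerocr u$ is a tidy organizational device: it turns the same computation into the single asymptotic statement $\int_1^\infty \frac{L(\xcerocr)-L(\xcerocr u)}{u^{\nnum+1}}\,\diff u=\bigl(\tfrac{-Bs}{\nnum(\nnum-s)}+o(1)\bigr)\xcerocr^s$, which handles both sign cases $(B>0,s<0)$ and $(B<0,0<s<\nnum)$ in one stroke and makes the role of the hypothesis $s<\nnum$ (needed both for convergence of $\int_1^\infty u^{s-\nnum-1}\,\diff u$ and for $\xcerocr^\nnum=o(\xcerocr^s)$) more transparent. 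Substantively the two arguments are the same; yours is just more compactly packaged.
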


\begin{theorem}
\label{teo: min resp Omega}
Given $\nnum \in \mathbb N$ and a loss function satisfying Assumptions~\ref{assum: L var acot}, \ref{assum: L disc fin}, \ref{assum: L cota O} and \ref{assum: L crec}, consider the class of non-randomized estimators defined by functions $\fest \in \Feln$. Denoting $\Omega = \lim_{\nden \rightarrow \infty} \nden \fest(\nden)$, there exists a value of $\Omega$ which minimizes the asymptotic risk $\riskas$ among all $\Omega \in \mathbb R^+$.
\end{theorem}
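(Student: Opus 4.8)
Throughout, regard $\riskas$ as a function of $\Omega\in\mathbb R^+$: this is legitimate since, by Theorem~\ref{teo: Feln incl Felp}, the asymptotic risk of an estimator in $\Feln$ depends on $\fest$ only through $\Omega$, and for every $\Omega\in\mathbb R^+$ the function $\nden\mapsto\Omega/\nden$ belongs to $\Feln$. By Proposition~\ref{prop: riskas C1 Omega} this function is continuous on $\mathbb R^+$, and by the remark following \eqref{eq: riskas x} it is finite for every $\Omega$; since $L\ge 0$, $\iota:=\inf_{\Omega\in\mathbb R^+}\riskas\in[0,\infty)$. The plan rests on the elementary observation that a continuous function on the open interval $\mathbb R^+$ which has \emph{no} minimizer must satisfy $\iota=\min\{\liminf_{\Omega\to 0^+}\riskas,\ \liminf_{\Omega\to\infty}\riskas\}$: along a minimizing sequence no subsequence can converge inside $\mathbb R^+$ (its limit would be a minimizer), so a subsequence must escape to $0$ or to $\infty$. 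Hence it suffices to prove the strict inequality
\[
\iota\ <\ \min\Bigl\{\liminf_{\Omega\to 0^+}\riskas,\ \liminf_{\Omega\to\infty}\riskas\Bigr\}.
\]

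The first step is to bound the two $\liminf$s from below. By the monotonicity in Assumption~\ref{assum: L crec} the limits $L(0+)=\lim_{x\to 0^+}L(x)\in(0,\infty]$ and $L(\infty)=\lim_{x\to\infty}L(x)\in(0,\infty]$ exist. Using the representation $\riskas=\int_0^\infty\psi(x,\Omega)L(x)\,\diff x$ from \eqref{eq: riskas x} together with $\int_0^\infty\psi(x,\Omega)\,\diff x=1$: for any $M<L(0+)$ pick $\eta$ with $L>M$ on $(0,\eta)$; then $\riskas\ge M\int_0^\eta\psi(x,\Omega)\,\diff x=M\,\Gamma(\nnum,\Omega/\eta)/(\nnum-1)!\to M$ as $\Omega\to 0^+$, so letting $M\uparrow L(0+)$ gives $\liminf_{\Omega\to 0^+}\riskas\ge L(0+)$, and the symmetric argument gives $\liminf_{\Omega\to\infty}\riskas\ge L(\infty)$.

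It then remains to exhibit $\Omega^\star\in\mathbb R^+$ with $\riskas(\Omega^\star)<\min\{L(0+),L(\infty)\}$. If both limits are $+\infty$ any $\Omega^\star$ works, $\riskas$ being everywhere finite. Otherwise take the smaller of the two limits; by the symmetry between the two ends, suppose it is $L(0+)<\infty$ and look for $\Omega^\star$ small. Write $\riskas-L(0+)=\int_0^\infty\psi(x,\Omega)(L(x)-L(0+))\,\diff x$ and split at $\xcerocr$: on $(0,\xcerocr)$ one has $L\le L(0+)$ by monotonicity, so that part is $\le 0$; and, writing $L(x)-L(0+)=(L(x)-L(\xcerocr))+(L(\xcerocr)-L(0+))$ on $(\xcerocr,\infty)$, the second summand contributes $\le 0$ while the first contributes $\frac{\Omega^\nnum}{(\nnum-1)!}\int_{\xcerocr}^\infty\frac{e^{-\Omega/x}\,(L(x)-L(\xcerocr))}{x^{\nnum+1}}\,\diff x$. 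The integrand here is dominated, uniformly in $\Omega$, by $x^{-\nnum-1}|L(x)-L(\xcerocr)|$, which is integrable on $(\xcerocr,\infty)$ by item~\ref{assum item: cota O, inf} of Assumption~\ref{assum: L cota O}; so by dominated convergence this integral tends, as $\Omega\to 0^+$, to $\int_{\xcerocr}^\infty\frac{L(x)-L(\xcerocr)}{x^{\nnum+1}}\,\diff x$, which is $<0$ by \eqref{eq: cond 0, integral}. Hence $\riskas<L(0+)=\min\{L(0+),L(\infty)\}$ for all sufficiently small $\Omega>0$. (When instead the smaller limit is $L(\infty)<\infty$, the symmetric argument for large $\Omega$ uses that both alternatives~\ref{assum item item: i} and~\ref{assum item item: ii} of property~\ref{assum item: bien der} in Assumption~\ref{assum: L crec} force $L(x)<L(\xinfcr+)$ on a left neighbourhood of $\xinfcr$ --- directly in case~\ref{assum item item: i}, and via Taylor's expansion of $L$ at $\xinfcr$ in case~\ref{assum item item: ii} --- while for large $\Omega$ the portion of the mass of $\psi(\cdot,\Omega)$ lying in $(0,\xinfcr)$ piles up near $\xinfcr$, since $\psi(\cdot,\Omega)$ is increasing on $(0,\Omega/(\nnum+1))\supset(0,\xinfcr)$.) This establishes the displayed strict inequality, and with it the theorem.

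The main obstacle is the last step, i.e.\ squeezing the strict inequality $\riskas(\Omega^\star)<\min\{L(0+),L(\infty)\}$ out of the hypotheses \eqref{eq: cond 0, integral} and~\ref{assum item item: i}--\ref{assum item item: ii}: it requires controlling $\int_0^\infty\psi(x,\Omega)L(x)\,\diff x$ near each endpoint precisely enough to pin down the sign of the leading term, keeping the possible blow-up of $L$ at $0$ and its growth at $\infty$ in check via Assumption~\ref{assum: L cota O} and localizing the mass of $\psi(\cdot,\Omega)$ correctly (at the origin as $\Omega\to 0^+$; at $\xinfcr$ within $(0,\xinfcr)$ as $\Omega\to\infty$). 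The several sub-cases of property~\ref{assum item: bien der} --- a jump versus a flat contact of order $t$ at $\xinfcr$ --- have to be treated one by one, though all reduce to the single fact that $L$ lies strictly below $L(\xinfcr+)$ just to the left of $\xinfcr$ (bounded away from it in case~\ref{assum item item: i}, shrinking polynomially in case~\ref{assum item item: ii}); and when $L$ is discontinuous at $\xcerocr$ or $\xinfcr$ a little extra bookkeeping is needed, which disappears for the continuous loss functions arising in the applications.
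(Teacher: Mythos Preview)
Your framing is different from the paper's but closely related. The paper proves, via two lemmas, that $\diff\riskas/\diff\Omega<0$ for all small $\Omega$ and $\diff\riskas/\diff\Omega>0$ for all large $\Omega$, then invokes continuity on the resulting compact interval. You instead compare $\riskas$ directly to the boundary values $L(0+)$ and $L(\infty)$ and look for an interior point strictly below the smaller of the two. The two strategies are essentially integrations of one another, and your treatment of the $\Omega\to 0^+$ end is clean: the dominated-convergence argument feeding into \eqref{eq: cond 0, integral} is correct and in fact slightly slicker than the paper's derivative computation in the corresponding lemma.

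The gap is at the other end. Calling it ``the symmetric argument'' is misleading: Assumption~\ref{assum: L crec}\ref{assum item: bien izq} gives you a global integral inequality tailor-made for dominated convergence as $\Omega\to 0$, whereas Assumption~\ref{assum: L crec}\ref{assum item: bien der} gives only a local condition (a jump, or a $t$-th order contact) at $\xinfcr$. To turn ``$L(x)<L(\xinfcr+)$ just left of $\xinfcr$'' and ``the mass of $\psi(\cdot,\Omega)$ on $(0,\xinfcr)$ piles up near $\xinfcr$'' into $\riskas(\Omega)<L(\infty)$ for large $\Omega$, you must show that the contribution of $\int_0^{\xinfcr-h}\psi(x,\Omega)L(x)\,\diff x$ is negligible against $\int_{\xinfcr-h}^{\xinfcr}\psi(x,\Omega)(L(\xinfcr+)-L(x))\,\diff x$. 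The first integral cannot be bounded by simply pulling out $\sup_{(0,\xinfcr-h)}L$, since $L$ may blow up at $0$; one needs the $O(x^{\Kcero})$ control from Assumption~\ref{assum: L cota O}\ref{assum item: cota O, cero} together with a splitting near $0$ to get the correct $e^{-\Omega/(\xinfcr-h)}$ decay with only polynomial losses. In case~\ref{assum item item: ii} the second integral is of order $\Omega^{\nnum-t-1}e^{-\Omega/\xinfcr}$ rather than $\Omega^{\nnum-1}e^{-\Omega/\xinfcr}$, so the comparison is tighter and requires a genuine asymptotic expansion, not just ``Taylor's expansion of $L$ at $\xinfcr$''. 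This is exactly the content of the paper's longest lemma, which carries out the expansion of $\Gamma(\nnum-j,\Omega/\xinfcr)$ term by term and uses a combinatorial identity to isolate the leading coefficient. Your last paragraph correctly identifies where the work lies but does not do it; as written, the $\Omega\to\infty$ half of the argument is a plan rather than a proof.
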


This theorem indicates that in the stated conditions, and restricted to the class defined by $\Feln$, there is an optimum value of $\Omega$ from the point of view of asymptotic risk. This optimum is not necessarily unique. In the sequel, $\risk^*$ will denote the minimum of $\riskas$ over the class of estimators defined by $\Feln$, and $\Omega^*$ will denote any value of $\Omega$ which attains this minimum, that is,
\begin{equation}
\label{eq: riskas opt nu}
\risk^* = \int_0^\infty \phi(\nu) L( \Omega^*/\nu) \,\diff \nu.
\end{equation}

Assumption~\ref{assum: L crec} holds for a wide range of loss functions, and in particular for those corresponding to normalized mean square error, normalized mean absolute error, and confidence associated with a relative interval. It is not difficult, however, to find a loss function for which the assumption does not hold, and for which $\riskas$ does not have a minimum over the class defined by $\Feln$. For example, given $A_1, A_2 >0$, let
\begin{equation}
\label{eq: L conf gen}
L(x) =
\begin{cases}
0 & \text{if } x \in [1/\fdos, \funo], \\
A_2 & \text{if } x < 1/\fdos, \\
A_1 & \text{if } x > \funo,
\end{cases}
\end{equation}
which is a generalized version of \eqref{eq: L conf}. Substituting \eqref{eq: L conf gen} into \eqref{eq: cond 0, integral}, it is seen that property~\ref{assum item: bien izq} in Assumption~\ref{assum: L crec} is satisfied if and only if
\begin{equation}
\label{eq: cond L conf gen}
\frac{A_1}{A_2} < (\funo\fdos)^\nnum,
\end{equation}
while property~\ref{assum item: bien der} holds irrespective of $A_1$ and $A_2$. On the other hand, for $\Omega \in \mathbb R$, substituting \eqref{eq: L conf gen} into \eqref{eq: riskas nu} and computing $\diff\riskas/\diff\Omega$ gives
\begin{equation}
\frac{\diff\riskas}{\diff\Omega}
% = A_1 \frac{\diff \gamma(\nnum,\Omega/\funo)}{\diff\Omega} + A_2 \frac{\diff \Gamma(\nnum,\Omega\fdos)}{\diff\Omega}
% = A_1 \frac{\diff}{\diff\Omega} \int_0^{\Omega/\funo} \phi(\nu) \,\diff\nu + A_2 \frac{\diff}{\diff\Omega} \int_{\Omega\fdos}^\infty \phi(\nu) \,\diff\nu \\
= \frac{\Omega^{\nnum-1} \left( A_1 \funo^{-\nnum} \exp(-\Omega/\funo) -A_2 \fdos^\nnum \exp(-\Omega\fdos) \right)}{(\nnum-1)!}.
\end{equation}
This implies that $\riskas$ has a single minimum over $\Omega \in \mathbb R$, located at
\begin{equation}
\Omega = \frac{\nnum \log(\funo\fdos) - \log(A_1/A_2)}{\fdos-1/\funo}.
\end{equation}
This value is positive if and only if \eqref{eq: cond L conf gen}, or equivalently property~\ref{assum item: bien izq} in Assumption~\ref{assum: L crec}, is satisfied. Thus, if this property does not hold, $\riskas$ is monotonically increasing for $\Omega \in \mathbb R^+$, which implies that there is not an optimum $\Omega$ within $\mathbb R^+$.

Under the hypotheses of Theorem~\ref{teo: min resp Omega}, the optimum value of $\Omega$ for the considered $\nnum$, i.e.~$\Omega^*$, satisfies, by Proposition~\ref{prop: riskas C1 Omega},
\begin{equation}
\label{eq: der 0}
\frac{\diff\riskas}{\diff\Omega} = 0
\end{equation}
(or equivalently, using the notation in the referred proposition, $\riskas|_{\nnum} = \riskas|_{\nnum+1}$). Thus if \eqref{eq: der 0} has only one solution, it must be equal to $\Omega^*$. If there are several solutions, at least one corresponds to the absolute minimum of $\riskas$, although not necessarily all of them do.

According to Theorem~\ref{teo: min resp Omega}, if the loss function satisfies Assumptions~\ref{assum: L var acot}, \ref{assum: L disc fin}, \ref{assum: L cota O} and \ref{assum: L crec}, any non-randomized estimator defined by a function $\fest \in \Feln$ with $\lim_{\nden \rightarrow \infty} \nden \fest(\nden) = \Omega^*$ minimizes $\limsup_{p \rightarrow 0} \risk(p)$ within the restricted class of estimators represented by $\Feln$; but not necessarily within the class of all non-randomized estimators, or within the general class of possibly randomized estimators. However, under slightly stronger conditions this turns out to be true, as established by the next theorem.

\setcounter{assumption}{2} % *!*
{ % Beginning of scope for the style redefinition
\makeatletter
\renewcommand\theassumption{\@arabic\c@assumption'}
\makeatother
\begin{assumption}
\label{assum: L cota Theta}
The loss function has the following asymptotic behaviour:
\begin{enumerate}
\item
\label{assum item: cota Theta, cero}
There exists $\Kcero<\nnum$ such that $L(x)$ is $\Theta(x^\Kcero)$ as $x \rightarrow 0$.
\item
\label{assum item: cota Theta, inf}
There exists $\Kinf<\nnum$ such that $L(x)$ is $\Theta(x^\Kinf)$ as $x \rightarrow \infty$.
\end{enumerate}
\end{assumption}
} % End of scope for the style redefinition

Assumption~\ref{assum: L cota Theta} replaces Assumption~\ref{assum: L cota O}, in the sense that
% \ref{assum item: cota Theta, inf} implies \ref{assum item: cota O, inf} and \ref{assum item: cota Theta, cero} implies \ref{assum item: cota O, cero}.
each of the two properties in Assumption~\ref{assum: L cota Theta} implies the corresponding one in Assumption~\ref{assum: L cota O}. The new conditions are only slightly more restrictive, and are still satisfied by a large set of loss functions, in particular by those previously mentioned as examples.

\begin{theorem}
\label{teo: opt}
Given $\nnum \in \mathbb N$ and any loss function satisfying Assumptions~\ref{assum: L var acot}, \ref{assum: L disc fin}, \ref{assum: L cota Theta} and \ref{assum: L crec}, $\limsup_{p \rightarrow 0} \risk(p)$ has a minimum over the general class of estimators defined by $\FeA$, and this minimum equals $\risk^*$.
\end{theorem}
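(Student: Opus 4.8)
The plan is to establish a matching lower bound: for any (possibly randomized) estimator $\festa \in \FeA$ we must show $\limsup_{p\rightarrow 0}\risk(p) \geq \risk^*$, since the upper bound $\limsup_{p\rightarrow 0}\risk(p) = \risk^*$ is already achieved by any $\fest \in \Feln$ with $\lim_{\nden\rightarrow\infty}\nden\fest(\nden) = \Omega^*$ by Theorems~\ref{teo: Feln incl Felp} and \ref{teo: min resp Omega}. So the whole content is the lower bound. Fix an arbitrary estimator with conditional distributions $\Pi_\nden$. The first step is a truncation/localization argument: using Assumption~\ref{assum: L cota Theta}, $L(x)$ grows no faster than $x^{\Kinf}$ with $\Kinf < \nnum$ as $x\rightarrow\infty$ and no faster than $x^{\Kcero}$ with $\Kcero < \nnum$ as $x\rightarrow 0$, and is bounded below by $\Theta$ of the same powers. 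The key scaling observation is that when $p$ is small, $\vanden$ is of order $\nnum/p$, i.e.~$\nu := p\vanden$ concentrates with density $\phi(\nu)$; more precisely $f(\nden) \approx p\,\phi(p\nden)$ for $\nden$ of order $1/p$, so the sum in \eqref{eq: risk gen} is a Riemann sum approximating $\int_0^\infty \phi(\nu)\,(\text{something})\,\diff\nu$.

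Second, I would reparametrize: for each $\nden$, let the "effective value" of the estimator at $\nden$ be captured by the conditional law of $y/p$ under $\Pi_\nden$. The crucial point is that the inner integral $\int_0^\infty L(y/p)\,\diff\Pi_\nden(y)$, for a given $\nden$, is at least $\inf_{\Omega>0}\int_0^\infty \psi(x,\Omega)L(x)\,\diff x$ rescaled appropriately — but this is not quite right pointwise; rather the whole sum, after the change of variables, should be shown to be bounded below asymptotically by $\risk^*$. The honest route: show that for any $\festa\in\FeA$, there is a sequence $p_k \rightarrow 0$ along which $\risk(p_k)$ converges (pass to a subsequence using that $\limsup$ is attained along some sequence) and along which the "mass" that the estimator places, as a function of the scaled variable, converges (weakly, after a further subsequence, by a tightness/Helly-type argument — here Assumption~\ref{assum: L cota Theta}'s $\Theta$-lower bounds force tightness, because if the estimator puts non-vanishing mass on very small or very large values of $\hatvap/p$, the risk blows up, contradicting finiteness of the $\limsup$; this is exactly why $\Theta$ is needed rather than just $O$). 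The limiting object is a probability measure describing the asymptotic distribution of $\hatvap/\hat p^{\,*}$-type ratios, and one shows $\lim \risk(p_k) = \int\int \phi(\nu)L(z/\nu)\,\diff(\text{limit measure})(z)\,\diff\nu \geq \int \phi(\nu) L(\Omega^*/\nu)\,\diff\nu = \risk^*$, the inequality coming from the fact that for each fixed $\nu$ (equivalently, averaging over $\nu$), $\Omega^*$ minimizes $\riskas$ over all constants, and a mixture over constants $\Omega$ cannot beat the best single $\Omega$ since $\riskas$ as a function of $\Omega$ has minimum $\risk^*$ — convexity is not available, but minimality of $\risk^*$ over $\Omega\in\mathbb R^+$ (Theorem~\ref{teo: min resp Omega}) suffices after interchanging the $\nu$-integral with the limit measure and applying $\int\phi(\nu)L(\Omega/\nu)\,\diff\nu \geq \risk^*$ inside.

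Third, I would make the Riemann-sum step rigorous. For fixed small $p$, split the sum over $\nden$ into blocks where $p\nden$ lies in small intervals $[\nu_j, \nu_{j+1})$; on each block $f(\nden)$ is, up to $1+o(1)$, equal to $p\,\phi(p\nden)$ uniformly (this uses Stirling on $\ff{(\nden-1)}{\nnum-1}/(\nnum-1)!$ and $(1-p)^{\nden-\nnum} = \exp(-p\nden(1+o(1)))$), and the contribution of the tails $\nden = O(1)$ and $p\nden \rightarrow \infty$ is negligible by Assumption~\ref{assum: L cota Theta} part~\ref{assum item: cota Theta, inf} with $\Kinf<\nnum$ and by $f$ decaying. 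Assumptions~\ref{assum: L var acot} (bounded variation) and \ref{assum: L disc fin} (finitely many discontinuities) guarantee that $L$ is Riemann-integrable against these densities and that the block approximation of the inner integral $\int L(y/p)\,\diff\Pi_\nden(y)$ is controlled.

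The main obstacle I anticipate is handling the randomized estimator's conditional distributions $\Pi_\nden$ uniformly as $p\rightarrow 0$: a priori the estimator could "cheat" by behaving erratically along the subsequence $p_k$, placing conditional mass in a $\nden$-dependent and $k$-dependent way that resists weak convergence. The resolution is to average out the $\nden$-dependence first: define, for each $p$, the finite measure $\mu_p$ on $\mathbb R^+$ by $\mu_p(B) = \sum_\nden f(\nden)\,\Pi_\nden(\{y : y/(p/p) \in B\})$ suitably normalized — i.e.~push forward via the scaled variable — and then show $\{\mu_{p_k}\}$ is tight using the two-sided $\Theta$ bounds (both the small-$x$ and large-$x$ halves of Assumption~\ref{assum: L cota Theta} are essential here, which is precisely the reason this Assumption, not merely Assumption~\ref{assum: L cota O}, appears in the hypothesis), extract a weakly convergent subsequence by Prokhorov/Helly, and pass to the limit in the risk integral using Fatou on the discontinuity-free part of $L$ (legitimate since $L\geq 0$ and $L$ has at most finitely many discontinuities by Assumption~\ref{assum: L disc fin}, each a jump by Assumptions~\ref{assum: L var acot}, so the limit measure charges them with probability we can control or ignore by a further small perturbation). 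Once the limit is identified as $\int_0^\infty\phi(\nu)\bigl(\int_0^\infty L(z/\nu)\,\diff\mathcal{Q}(z)\bigr)\diff\nu$ for some probability measure $\mathcal{Q}$, Tonelli lets us swap the order and bound it below by $\inf_\Omega \int\phi(\nu)L(\Omega/\nu)\,\diff\nu = \risk^*$, completing the lower bound and hence the Theorem.
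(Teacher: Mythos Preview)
Your weak-convergence strategy is genuinely different from the paper's, and it contains a real gap at the final step. You assert that along a subsequence $p_k\to 0$ the risk converges to
\[
\int_0^\infty \phi(\nu)\Bigl(\int_0^\infty L(z/\nu)\,\diff\mathcal Q(z)\Bigr)\diff\nu
\]
for a \emph{single} probability measure $\mathcal Q$, and then invoke Tonelli together with $\int\phi(\nu)L(\Omega/\nu)\,\diff\nu\geq\risk^*$ for every $\Omega$. But the product structure is unjustified. At fixed $p_k$, the conditional law of the scaled estimate $\omega=\vanden\hatvap$ given $\nu=p_k\vanden$ is the pushforward of $\Pi_{\lfloor\nu/p_k\rfloor}$, and as $\nu$ varies these come from \emph{different} values of $\nden$ and hence need not agree. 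Concretely, take $\hatvap=\fest(\vanden)$ with $\nden\fest(\nden)=h(\log\nden)$ for a bounded nonconstant periodic $h$, and choose $p_k$ so that $-\log p_k$ is a multiple of the period; the limiting risk along $(p_k)$ is then $\int\phi(\nu)L(h(\log\nu)/\nu)\,\diff\nu$, which is not of the form you claim. For such non-product limits your closing inequality (``a mixture over constants cannot beat the best constant'') fails: you are effectively allowing the ``$\Omega$'' to depend on $\nu$, and nothing in your argument excludes the resulting integral from falling below $\risk^*$.

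The paper bypasses this by a device that exploits precisely the structure your limit discards. Arguing by contradiction from $\risk(p)<\theta\risk^*$ for all small $p$, it studies $X=\int_{p_0}^{p_1}\risk(p)\,\diff p/p$. After Fubini and the substitution $\nu=\nden p$, the inner integral becomes, for each \emph{fixed} $\nden$, an integral over $\nu$ against the pushforward of $\Pi_\nden$, which is now genuinely constant in $\nu$. It is at this point that the bound $\int\phi(\nu)L(\Omega/\nu)\,\diff\nu\geq\risk^*$ legitimately enters, in a truncated form made \emph{uniform over all} $\Omega\in\mathbb R^+$ by a separate lemma (this uniformity is exactly where the two-sided $\Theta$-bounds of Assumption~\ref{assum: L cota Theta}, as opposed to the one-sided Assumption~\ref{assum: L cota O}, are used). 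Summing $1/\nden$ over the relevant range reproduces $\log(p_1/p_0)$ and yields the contradiction $X>\theta\risk^*\log(p_1/p_0)$. In short, integrating over $p$ rather than passing to the limit along a subsequence is what decouples $\nu$ from the estimator's conditional law and makes the lower bound go through; your compactness route does not recover this decoupling.
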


\begin{corollary}
\label{cor: opt, vale Feln}
Under the hypotheses of Theorem~\ref{teo: opt}, any non-randomized estimator defined by a function $\fest \in \Feln$ with $\lim_{\nden \rightarrow \infty} \nden \fest(\nden) = \Omega^*$ minimizes $\limsup_{p \rightarrow 0} \risk(p)$ among all (possibly randomized) estimators based on inverse binomial sampling.
\end{corollary}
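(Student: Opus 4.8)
The plan is to obtain the corollary as an essentially immediate consequence of Theorems~\ref{teo: Feln incl Felp}, \ref{teo: min resp Omega} and \ref{teo: opt} together with the sufficiency reduction of Section~\ref{parte: prel}, so the argument will be short. First I would fix a non-randomized estimator $\hatvap = \fest(\vanden)$ with $\fest \in \Feln$ and $\lim_{\nden \rightarrow \infty} \nden \fest(\nden) = \Omega^*$. The hypotheses of Theorem~\ref{teo: opt} include Assumptions~\ref{assum: L var acot}, \ref{assum: L disc fin}, \ref{assum: L cota Theta} and \ref{assum: L crec}; since Assumption~\ref{assum: L cota Theta} implies Assumption~\ref{assum: L cota O} and Assumption~\ref{assum: L disc fin} implies Assumption~\ref{assum: L disc fin compacto}, as already observed in the text, all hypotheses of Theorem~\ref{teo: Feln incl Felp} are in force. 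Hence $\lim_{p \rightarrow 0} \risk(p)$ exists and equals $\riskas$ evaluated at $\Omega = \Omega^*$, which by the definition of $\risk^*$ in \eqref{eq: riskas opt nu} is exactly $\risk^*$. Because the limit exists, $\limsup_{p \rightarrow 0} \risk(p) = \lim_{p \rightarrow 0} \risk(p) = \risk^*$.

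Next I would invoke Theorem~\ref{teo: opt}: under the same hypotheses, $\limsup_{p \rightarrow 0} \risk(p)$ attains its minimum over the general class of estimators $\FeA$, and that minimum is $\risk^*$. Since the chosen $\fest$ achieves the value $\risk^*$, it is a minimizer over $\FeA$. To pass from $\FeA$ to ``all estimators based on inverse binomial sampling'', I would recall the sufficiency argument of Section~\ref{parte: prel}: for any estimator depending on the full sequence of Bernoulli outcomes whose risk exists, there is a randomized estimator depending on $\vanden$ alone with the same risk function $\risk(p)$, hence the same $\limsup_{p \rightarrow 0} \risk(p)$; and randomized estimators expressed through $\vanden$ are precisely those parametrized by $\festa \in \FeA$. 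Therefore no estimator whatsoever can achieve a value of $\limsup_{p \rightarrow 0} \risk(p)$ below $\risk^*$, while the fixed $\fest \in \Feln$ attains it, which is the assertion of the corollary.

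I expect no real obstacle here, since all the substantive work resides in the cited theorems and in the sufficiency discussion. The only points deserving a word of care are the implication chains among the Assumptions, so that Theorem~\ref{teo: Feln incl Felp}, stated under Assumption~\ref{assum: L cota O}, may legitimately be used under the stronger Assumption~\ref{assum: L cota Theta} appearing in Theorem~\ref{teo: opt}, and the identification of ``estimators based on inverse binomial sampling'' with the class $\FeA$ via sufficiency. A further, very minor, remark is that the statement is not vacuous: $\Feln$ is nonempty and contains functions with the prescribed limit, for instance $\fest(\nden) = \Omega^*/\nden$, while Theorem~\ref{teo: min resp Omega} guarantees that such an $\Omega^* \in \mathbb R^+$ exists.
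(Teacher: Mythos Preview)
Your proposal is correct and follows exactly the intended route: the paper offers no separate proof of this corollary, treating it as an immediate consequence of Theorems~\ref{teo: Feln incl Felp} and \ref{teo: opt} together with the sufficiency reduction, which is precisely what you spell out. The care you take with the implication chains among the Assumptions and with the identification of the general class with $\FeA$ is appropriate and complete.
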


Theorem~\ref{teo: opt} and Corollary~\ref{cor: opt, vale Feln} show that, under the stated assumptions, an estimator can be found within the class defined by $\Feln$ that is asymptotically optimum over the general class represented by $\FeA$.

\section{Discussion and applications}
\label{parte: disc}

%\subsection{Significance of the results. Non-asymptotic regime}

Since $p$ is unknown, it is desirable to have an estimator that \emph{guarantees} that the risk is not larger than a given $\risk_0$ for $p$ arbitrary, or at least for all $p$ within a certain interval; that is, such that $\risk(p) \leq \risk_0$ for $p$ in some interval $(p_1,p_2)$, with $0 \leq p_1 < p_2 \leq 1$. If $p_1 = 0$, the estimator is  said to \emph{asymptotically guarantee} that the risk is not larger than $\risk_0$; if, in addition, $p_1=1$, it \emph{globally guarantees} that the risk is not larger than $\risk_0$.

The results presented in Section~\ref{parte: res} generalize the asymptotic analysis by \citet{Mendo10}, which considers the specific loss function \eqref{eq: L conf}, to arbitrary functions satisfying the indicated assumptions. The importance of these asymptotic results lies not only in the fact that in many applications $p$ is small, but also in the observation that asymptotic behaviour sets a restriction on the risk that can be guaranteed. This restriction is represented by the following proposition (which is a straightforward generalization of \citet[proposition~1]{Mendo10}) and its corollary.

\begin{proposition}
\label{prop: as nonas}
If an estimator has a risk $\risk(p)$ not larger than a given $\risk_0$ for all $p \in (p_1,p_2)$, then necessarily $\limsup_{p \rightarrow p_0} \risk(p) \leq \risk_0$ for any $p_0 \in [p_1,p_2]$.
\end{proposition}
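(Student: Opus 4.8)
The plan is to argue directly from the definition of limit superior; the statement is essentially a tautological consequence of it. Writing the domain of the risk as $(0,1)$, recall that
\[
\limsup_{p \to p_0} \risk(p) = \inf_{\delta > 0} \; \sup \{ \risk(p) : p \in (0,1),\ 0 < |p - p_0| < \delta \},
\]
where, consistently with the one-sided notation introduced in Section~\ref{parte: prel}, the approach at an endpoint $p_0 \in \{p_1,p_2\}$ is understood to be one-sided, from inside $(p_1,p_2)$ — which is automatic when $p_1 = 0$ or $p_2 = 1$ because the domain is $(0,1)$. (Equivalently one may read $\limsup_{p\to p_0}$ as a limit superior over $p$ restricted to $(p_1,p_2)$; the proof is the same.)

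First I would dispose of an interior point $p_0 \in (p_1,p_2)$. Choosing any $\delta \le \min\{p_0 - p_1,\ p_2 - p_0\}$ makes the punctured neighbourhood $\{p : 0 < |p - p_0| < \delta\}$ a subset of $(p_1,p_2)$, so by hypothesis $\risk(p) \le \risk_0$ on it; hence the inner supremum in the display is $\le \risk_0$ for every such $\delta$, and therefore so is the infimum over $\delta$. Next I would treat the endpoints $p_0 = p_1$ and $p_0 = p_2$ in exactly the same way, the only change being that the relevant neighbourhood is one-sided: for $\delta \le p_2 - p_1$ the set $\{p : p_1 < p < p_1 + \delta\}$, respectively $\{p : p_2 - \delta < p < p_2\}$, is contained in $(p_1,p_2)$, where again $\risk \le \risk_0$, so the same infimum-of-suprema estimate yields $\limsup_{p\to p_0}\risk(p) \le \risk_0$.

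I do not expect any genuine obstacle here: the argument is just the monotonicity of $\delta \mapsto \sup\{\risk(p) : 0 < |p-p_0| < \delta\}$ together with the fact that a family of numbers all bounded above by $\risk_0$ has infimum bounded above by $\risk_0$. The one point that needs a line of care is ensuring that the shrinking neighbourhoods over which the supremum is taken eventually lie entirely within $(p_1,p_2)$ — which they do once $\delta$ is smaller than the distance from $p_0$ to the nearer endpoint, or, at $p_1$ and $p_2$ themselves, smaller than $p_2 - p_1$ — together with fixing the convention for $\limsup_{p\to p_0}$ at the endpoints so that the claim is not vacuously false there. For completeness I would also note that the hypothesis forces $\risk(p) < \infty$ throughout $(p_1,p_2)$, so all the suprema involved are genuine (finite) real numbers.
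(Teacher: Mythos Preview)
Your argument is correct: the proposition follows directly from the definition of $\limsup$, and you unpack this cleanly, including the care needed at the endpoints. The paper itself does not give a self-contained proof but simply refers to the analogous result in \citet{Mendo08c_env}; your direct argument from the definition is exactly the kind of elementary verification one expects for such a statement, and there is nothing materially different to compare.
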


\begin{corollary}
\label{cor: no existe}
Given $\nnum \in \mathbb N$ and a loss function that satisfies Assumptions~\ref{assum: L var acot}, \ref{assum: L disc fin}, \ref{assum: L cota Theta} and \ref{assum: L crec}, for any $\risk_0 < \risk^*$ and $p_2>0$, no estimator can guarantee that $\risk(p) \leq \risk_0$ for all $p < p_2$.
\end{corollary}

According to the results in Section~\ref{parte: res}, if Assumptions~\ref{assum: L var acot}, \ref{assum: L disc fin}, \ref{assum: L cota Theta} and \ref{assum: L crec} are satisfied, any estimator defined by $\fest \in \Feln$ with $\lim_{\nden \rightarrow \infty} \nden \fest(\nden) = \Omega^*$ can asymptotically guarantee that the risk is not larger than $\risk^* + \epsilon$ for any $\epsilon > 0$, whereas Corollary~\ref{cor: no existe} states that no estimator exists with this property for $\epsilon < 0$. It remains to be seen if there exist estimators that asymptotically guarantee that $\risk(p) \leq \risk^*$; and, particularly, if this guarantee can be global. The answer to these questions depends on the loss function under consideration. Since a general analysis seems impracticable, a separate study needs to be carried out for each loss function. Several important cases are discussed next, including the loss functions already mentioned as examples.

\subsection{Confidence}
%\paragraph{Confidence}

For the loss function given by \eqref{eq: L conf}, $\risk(p)$ equals $1-c(p)$, where $c(p) = \Pr[p/\fdos \leq \hatvap \leq p\funo] = \Pr[\hatvap/\funo \leq p \leq \hatvap \fdos]$ is the \emph{confidence} associated with a relative interval defined by $\funo, \fdos>1$. Let $c^* = 1-\risk^*$, which represents the maximum confidence that could be guaranteed to be exceeded. The analysis by \citet{Mendo10} shows that assuming $\nnum \geq 3$, the inequality $c(p) > c^*$ can indeed be asymptotically guaranteed for any $\funo$, $\fdos$, and globally guaranteed if $\funo, \fdos$ satisfy certain conditions.

\subsection{Mean absolute error}
%\paragraph{Mean absolute error}

For $L(x) = |x-1|$, risk corresponds to \emph{normalized mean absolute error}. Considering an estimator $\hatvap = \fest(\vanden)$ with $\lim_{\nden \rightarrow \infty} \nden \fest(\nden) = \Omega$, and for $\nnum \geq 2$, \eqref{eq: riskas nu} gives the asymptotic risk
\begin{equation}
\label{eq: riskas MAE}
\riskas = \int_0^\infty \phi(\nu) \left| \frac{\Omega}{\nu} - 1 \right| \,\diff \nu
% = 2 \int_0^\Omega \phi(\nu) \left( \frac{\Omega}{\nu} - 1 \right) \,\diff \nu - \int_0^\infty \phi(\nu) \left( \frac{\Omega}{\nu} - 1 \right) \,\diff \nu \\
= \frac{2\left(\Gamma(\nnum,\Omega)-\Omega\Gamma(\nnum-1,\Omega\right))}{(\nnum-1)!} + \frac{\Omega}{\nnum-1} - 1,
\end{equation}
and it is straightforward to show that \eqref{eq: der 0} reduces to $\Gamma(\nnum-1,\Omega)= (\nnum-2)!/2$. This equation has only one solution, which thus corresponds to $\Omega^*$. Interestingly, for $\hatvap = \Omega^*/(\nden-1)$ with $\nnum \geq 2$, numerically evaluating $\risk(p)$ suggests that this estimator may globally guarantee $\risk(p) \leq \risk^*$. However, proving this conjecture remains an open problem.

\subsection{Mean square error}
%\paragraph{Mean square error}

The function $L(x) = (x-1)^2$ corresponds to \emph{normalized mean square error}. This loss function lends itself easily to non-asymptotic analysis. Considering an estimator $\hatvap = \fest(\vanden)$ with $\lim_{\nden \rightarrow \infty} \nden \fest(\nden) = \Omega$, and assuming $\nnum \geq 3$, \eqref{eq: riskas nu} gives
\begin{equation}
\label{eq: riskas MSE}
\riskas = \int_0^\infty \phi(\nu) \left( \frac{\Omega}{\nu} - 1 \right)^2 \diff \nu
= \frac{\Omega^2}{(\nnum-1)(\nnum-2)} - \frac{2\Omega}{\nnum-1}+1,
\end{equation}
and thus \eqref{eq: der 0} has the single solution $\Omega=\nnum-2$, which is the optimum value for $\Omega$, i.e.~$\Omega^*$. From \eqref{eq: riskas MSE} the resulting $\risk^*$ is $1/(\nnum-1)$. As established by the next proposition, an estimator can be found that globally guarantees that the risk is not larger than $\risk^*$, namely
\begin{equation}
\label{eq: hatvap nnum 2 vanden 1}
\hatvap = \frac{\nnum-2}{\vanden-1}.
\end{equation}

\begin{proposition}
\label{prop: MSE gar}
Given $\nnum \geq 3$, and for any $p \in (0,1)$, the estimator \eqref{eq: hatvap nnum 2 vanden 1} satisfies
\begin{equation}
\label{eq: MSE gar}
\frac{\E[(\hatvap -p)^2]}{p^2} < \frac{1}{\nnum-1}.
\end{equation}
\end{proposition}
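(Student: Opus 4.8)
The plan is to reduce Proposition~\ref{prop: MSE gar} to the fact, recalled in Section~\ref{parte: intro} and due to \citet{Mikulski76}, that the uniformly minimum variance unbiased estimator \eqref{eq: hatvap nnum 1 vanden 1}, which I will write as $q = (\nnum-1)/(\vanden-1)$, satisfies $\E[(q-p)^2]/p^2 < 1/(\nnum-2)$ for every $p \in (0,1)$ when $\nnum \geq 3$. No delicate estimate on the negative binomial distribution is then needed: everything reduces to recognizing that \eqref{eq: hatvap nnum 2 vanden 1} is a rescaling of $q$.

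The first observation is that, writing $c = (\nnum-2)/(\nnum-1) \in (0,1)$, the estimator \eqref{eq: hatvap nnum 2 vanden 1} is $\hatvap = c\,q$. Since $\vanden \geq \nnum$ forces $\vanden - 1 \geq \nnum - 1 \geq 2$, both $q$ and $\hatvap$ are bounded random variables, so the expectations below are finite. I would then use the unbiasedness $\E[q] = p$ \citep{Haldane45} together with $\E[q^2] = \E[(q-p)^2] + p^2$ to expand $\E[(\hatvap-p)^2] = c^2\E[q^2] - 2cp\,\E[q] + p^2 = c^2\bigl(\E[(q-p)^2] + p^2\bigr) - 2cp^2 + p^2$, and after collecting terms and dividing by $p^2$ obtain the exact identity
\[
\frac{\E[(\hatvap-p)^2]}{p^2} = (c-1)^2 + c^2\,\frac{\E[(q-p)^2]}{p^2} = \frac{1}{(\nnum-1)^2} + \frac{(\nnum-2)^2}{(\nnum-1)^2}\cdot\frac{\E[(q-p)^2]}{p^2},
\]
where I used $c - 1 = -1/(\nnum-1)$.

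The last step is purely algebraic. Multiplying the target inequality \eqref{eq: MSE gar} through by $(\nnum-1)^2 > 0$ and using the identity above, \eqref{eq: MSE gar} is seen to be equivalent to $(\nnum-2)^2\,\E[(q-p)^2]/p^2 < \nnum - 2$, i.e.\ (dividing by $\nnum - 2 \geq 1$) to $\E[(q-p)^2]/p^2 < 1/(\nnum-2)$, which is precisely the Mikulski bound and is strict. Hence \eqref{eq: MSE gar} holds for all $p \in (0,1)$ and $\nnum \geq 3$.

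Because the argument rests on a cited result, I do not anticipate a real obstacle; the only points needing care are the elementary algebra of the expansion and the immediate remark that the relevant expectations are finite by boundedness of $\vanden - 1$ away from $1$. If instead one wanted a self-contained proof, the work would shift to evaluating or bounding $\E[1/(\vanden-1)^2]$ directly from the probability function \eqref{eq: f} (for instance by rewriting $\ff{(\nden-1)}{\nnum-1}/(\nden-1)$ in terms of $\ff{(\nden-2)}{\nnum-2}$ and resumming, or via an integral comparison), which is the genuinely computational part of the problem.
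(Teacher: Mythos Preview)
Your proposal is correct and takes essentially the same approach as the paper: both arguments reduce the inequality to the Mikulski--Smith bound $\E[(q-p)^2]/p^2 < 1/(\nnum-2)$ for $q=(\nnum-1)/(\vanden-1)$, combined with the unbiasedness $\E[q]=p$. The only cosmetic difference is that the paper expands directly in terms of $\E[1/(\vanden-1)]$ and $\E[1/(\vanden-1)^2]$ and then invokes Mikulski's variance inequality $\Var[q]\le p^2(1-p)/(\nnum-2)$, whereas you first recognize $\hatvap=cq$ with $c=(\nnum-2)/(\nnum-1)$ and use the bias--variance identity; the algebra and the single external input are identical.
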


The following corollary is obtained from Theorem~\ref{teo: opt} and Proposition~\ref{prop: MSE gar}.

\begin{corollary}
\label{cor: opt global gar NMSE}
For $\nnum \geq 3$, the estimator \eqref{eq: hatvap nnum 2 vanden 1} minimizes $\sup_{p \in (0,1)} \E[(\hatvap -p)^2]/p^2$ among all (possibly randomized) estimators based on inverse binomial sampling.
\end{corollary}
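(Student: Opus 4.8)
The plan is to combine Theorem~\ref{teo: opt} with Proposition~\ref{prop: MSE gar}, bracketing the quantity $\sup_{p \in (0,1)} \E[(\hatvap-p)^2]/p^2$ from both sides. First I would verify that the loss function $L(x) = (x-1)^2$ satisfies the four Assumptions required by Theorem~\ref{teo: opt} (namely Assumptions~\ref{assum: L var acot}, \ref{assum: L disc fin}, \ref{assum: L cota Theta} and \ref{assum: L crec}): it is a polynomial, hence continuous with no discontinuities and of bounded variation on every compact subinterval of $\mathbb R^+$, so Assumptions~\ref{assum: L var acot} and \ref{assum: L disc fin} hold; it is $\Theta(1)$ as $x \to 0$ (so $\Kcero = 0 < \nnum$ for $\nnum \geq 1$) and $\Theta(x^2)$ as $x \to \infty$ (so $\Kinf = 2 < \nnum$ since $\nnum \geq 3$), giving Assumption~\ref{assum: L cota Theta}; and it is decreasing on $(0,1)$ and increasing on $(1,\infty)$ with a strictly positive second derivative at $x = \xinfcr = 1$ (the case $t=2$ of condition~\ref{assum item item: ii}, for which $(-1)^{t-1} = -1$ and $L''(1) = 2 > 0$, so $(-1)^{t-1}L''(1) = -2$, which is \emph{not} positive — so I would instead take $\xinfcr$ slightly larger than $1$ where $L'' > 0$ still, or more simply invoke condition~\ref{assum item: bien izq} via Proposition~\ref{prop: assum item bien izq, lim} with $A = L(0) = 1$, $s = 1$, $B = L'(0)/1$; note $\lim_{x\to 0}(L(x)-1)/x = \lim_{x\to 0}(x-2) = -2 = B$, so $Bs = -2 < 0$ and $s = 1 < \nnum$), and for the right tail take any $\xinfcr > 1$ on which $L$ is increasing and $C^\infty$ with $L'(\xinfcr) \neq 0$ — wait, condition~\ref{assum item item: ii} requires $L'(\xinfcr) = 0$, so the clean choice is $\xinfcr = 1$ handled by condition~\ref{assum item item: i}? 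No, $L$ is continuous there. The robust route is: $\xinfcr$ any point $>1$, using the fact (established in Section~\ref{parte: disc}, where it is asserted that Assumption~\ref{assum: L crec} holds for NMSE) that $(x-1)^2$ satisfies it; I would cite that assertion and spell out the computation \eqref{eq: riskas MSE}–$\Omega^* = \nnum - 2$–$\risk^* = 1/(\nnum-1)$ already carried out in the text.

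Second, Theorem~\ref{teo: opt} then gives that $\limsup_{p\to 0}\risk(p)$, minimized over all randomized estimators defined by $\FeA$, equals $\risk^* = 1/(\nnum-1)$. Since by Section~\ref{parte: prel} every estimator based on inverse binomial sampling has the same risk function as some estimator in $\FeA$, this is a genuine lower bound on what any estimator can achieve asymptotically: for every estimator, $\limsup_{p\to 0}\E[(\hatvap-p)^2]/p^2 \geq 1/(\nnum-1)$. A fortiori, $\sup_{p\in(0,1)}\E[(\hatvap-p)^2]/p^2 \geq \limsup_{p\to 0}\E[(\hatvap-p)^2]/p^2 \geq 1/(\nnum-1)$ for every estimator. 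This yields the inequality $\inf_{\hatvap}\sup_{p\in(0,1)}\E[(\hatvap-p)^2]/p^2 \geq 1/(\nnum-1)$.

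Third, Proposition~\ref{prop: MSE gar} states that the specific estimator \eqref{eq: hatvap nnum 2 vanden 1}, i.e.\ $\hatvap = (\nnum-2)/(\vanden-1)$, satisfies $\E[(\hatvap-p)^2]/p^2 < 1/(\nnum-1)$ for every $p \in (0,1)$; taking the supremum over $p$ gives $\sup_{p\in(0,1)}\E[(\hatvap-p)^2]/p^2 \leq 1/(\nnum-1)$ for this particular estimator. Combining with the lower bound from the previous paragraph, this estimator attains the infimum, so the infimum is a minimum and equals $1/(\nnum-1)$, achieved by \eqref{eq: hatvap nnum 2 vanden 1}. (I should note the strict inequality in Proposition~\ref{prop: MSE gar} does not cause trouble: it shows the supremum is $\leq 1/(\nnum-1)$, and the lower bound shows it is $\geq 1/(\nnum-1)$, so it equals $1/(\nnum-1)$ even though the value is not attained at any single $p$.)

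The only mild obstacle I anticipate is the bookkeeping around Assumption~\ref{assum: L crec} for $L(x) = (x-1)^2$ — specifically choosing $\xinfcr$ and the right sub-case of property~\ref{assum item: bien der} correctly, since the quadratic's minimum sits at an interior point with vanishing first derivative and positive (not sign-alternating) second derivative, so one wants the $t=2$ branch but with the sign condition $(-1)^{t-1}L^{(t)}(\xinfcr) > 0$ reading $-L''(1) > 0$, which fails; the resolution is that $L$ is non-decreasing on all of $(\xinfcr,\infty)$ for \emph{any} $\xinfcr \geq 1$, and one may pick $\xinfcr$ to be a point where $L$ has a jump — there are none — hence the intended reading must be that for a $C^t$ loss one picks $\xinfcr$ at which $L$ transitions from non-increasing is irrelevant; rather, the text already certifies NMSE satisfies Assumption~\ref{assum: L crec}, so I would simply defer to that and not re-derive it. Everything else is a direct concatenation of previously proved statements.
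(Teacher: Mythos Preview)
Your approach is correct and matches the paper's exactly: the corollary is stated there as an immediate consequence of Theorem~\ref{teo: opt} and Proposition~\ref{prop: MSE gar}, via precisely the lower-bound/upper-bound sandwich you describe. Your worry about verifying Assumption~\ref{assum: L crec}\ref{assum item: bien der} for $L(x)=(x-1)^2$ is unfounded: simply take any $\xinfcr>1$ and $t=1$, so that the condition ``$L^{(i)}(\xinfcr)=0$ for $i=1,\ldots,t-1$'' is vacuous and $(-1)^{0}L'(\xinfcr)=2(\xinfcr-1)>0$.
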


Thus the estimator given by \eqref{eq: hatvap nnum 2 vanden 1} not only minimizes $\limsup_{p \rightarrow 0} \E[(\hatvap -p)^2]/p^2$, but also $\sup_{p \in (0,1)} \E[(\hatvap -p)^2]/p^2$, i.e.~it is minimax with respect to normalized mean square error. Therefore, from the point of view of guaranteeing that the normalized mean square error does not exceed a given value, \eqref{eq: hatvap nnum 2 vanden 1} is optimum among all estimators based on inverse binomial sampling.
%%in the sense that if an estimator globally guarantees that $\E[(\hatvap -p)^2]/p^2 \leq \risk_0$ for some $\risk_0$, then necessarily $\risk_0 \geq 1/(\nnum-1)$.

Comparing the estimators \eqref{eq: hatvap nnum 1 vanden 1} and \eqref{eq: hatvap nnum 2 vanden 1}, the former can only guarantee $\E[(\hatvap -p)^2]/p^2 < 1/(\nnum-2)$, whereas the latter guarantees $\E[(\hatvap -p)^2]/p^2 < 1/(\nnum-1)$. This better (in fact, optimum) performance is obtained at the expense of some bias; namely, it is easily seen that \eqref{eq: hatvap nnum 2 vanden 1} gives $\E[\hatvap]/p = 1 - 1/(\nnum-1)$.

\subsection{A generalization of confidence}
%\paragraph{A generalization of confidence}

According to \citet[proposition~3]{Mendo10}, for the loss function \eqref{eq: L conf}, given $\Omega \in \mathbb R^+$ and assuming that $\nnum \geq 3$, $\funo \geq {\Omega}/(\nnum - \sqrt{\nnum})$ and $\fdos \geq (\nnum + \sqrt\nnum + 1)/{\Omega}$, the estimator
\begin{equation}
\label{eq: hatvap Omega vanden mas 1}
\hatvap = \frac{\Omega}{\vanden+1}
\end{equation}
globally guarantees that $\risk(p)$ is smaller than its asymptotic value $\riskas$. Taking into account that, in this case, $\risk(p) = \Pr[\hatvap<p/\fdos]+\Pr[\hatvap>p\funo]$ and that the proof given in the cited reference considers the terms $\Pr[\hatvap<p/\fdos]$ and $\Pr[\hatvap>p\funo]$ separately, it can be seen that the same result holds for the loss function \eqref{eq: L conf gen} with $A_1=0$ or $A_2=0$. Furthermore, the result can be generalized to any loss function that can be approximated as a (possibly infinite) sum of functions of this form. This is the content of the next proposition.

\begin{proposition}
\label{prop: conf gen2 gar}
Given $\nnum \geq 3$ and $\Omega \in \mathbb R^+$, consider a loss function for which Assumptions~\ref{assum: L disc fin}, \ref{assum: L cota Theta} and \ref{assum: L crec} hold and that satisfies the following:
\begin{enumerate}
\item
\label{prop item: const centro}
$L$ is constant on an interval $[\upsilon, \upsilon']$, with
\begin{equation}
\label{eq: cond conf gen2 gar}
\upsilon \leq \frac{\Omega}{\nnum+\sqrt\nnum+1}, \quad
\upsilon' \geq \frac{\Omega}{\nnum-\sqrt\nnum}.
\end{equation}
\item
\label{prop item: no-inc izq}
$L$ is non-increasing on $(0,\upsilon]$.
\item
\label{prop item: non-dec der}
$L$ is non-decreasing on $[\upsilon',\infty)$.
\end{enumerate}
In these conditions, for any $p \in (0,1)$ the risk $\risk(p)$ of the estimator \eqref{eq: hatvap Omega vanden mas 1} satisfies $\risk(p) \leq \riskas$, with $\riskas$ given by \eqref{eq: riskas nu} (or \eqref{eq: riskas x}).
\end{proposition}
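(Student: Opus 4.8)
The plan is to reduce the statement to the one-sided indicator loss functions already treated in \citet[proposition~3]{Mendo08c_env}, by means of a layer-cake decomposition of $L$. First I would remove the constant part: let $c$ denote the common value of $L$ on $[\upsilon,\upsilon']$ (property~\ref{prop item: const centro}) and set $\tilde L = L - c$. By properties~\ref{prop item: no-inc izq} and \ref{prop item: non-dec der}, $\tilde L$ is non-negative, vanishes on $[\upsilon,\upsilon']$, is non-increasing on $(0,\upsilon]$ and non-decreasing on $[\upsilon',\infty)$. Since $\sum_{\nden\geq\nnum} f(\nden) = 1$ and $\int_0^\infty\phi(\nu)\,\diff\nu = 1$, replacing $L$ by $\tilde L$ subtracts exactly $c$ from both $\risk(p)$ and $\riskas$, so it suffices to prove $\E[\tilde L(\hatvap/p)]\leq\int_0^\infty\phi(\nu)\tilde L(\Omega/\nu)\,\diff\nu$. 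Because $\tilde L$ vanishes on $[\upsilon,\upsilon']$, I can split $\tilde L = \tilde L_\mathrm{n} + \tilde L_\mathrm{p}$, where $\tilde L_\mathrm{n}$ agrees with $\tilde L$ on $(0,\upsilon]$ and is $0$ elsewhere, and $\tilde L_\mathrm{p}$ agrees with $\tilde L$ on $[\upsilon',\infty)$ and is $0$ elsewhere; then $\tilde L_\mathrm{n}$ is non-increasing on $\mathbb R^+$ and $\tilde L_\mathrm{p}$ is non-decreasing on $\mathbb R^+$, each with only finitely many discontinuities by Assumption~\ref{assum: L disc fin}.

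Next I would bound the contribution of $\tilde L_\mathrm{p}$. Writing its finitely many jumps as a step function plus a continuous non-decreasing remainder supported on $[\upsilon',\infty)$, the layer-cake identity expresses $\tilde L_\mathrm{p}(y)$ as a non-negative combination of indicators $\mathbf 1_{(t,\infty)}(y)$ with $t\geq\upsilon'$ (the jump points handled separately). Each $\mathbf 1_{(t,\infty)}$ is a loss function of the form \eqref{eq: L conf gen} with $A_2 = 0$ and $\funo = t$; since $t\geq\upsilon'\geq\Omega/(\nnum-\sqrt\nnum)$, the half of the argument in \citet[proposition~3]{Mendo08c_env} that bounds $\Pr[\hatvap>p\funo]$ (and which needs only the condition $\funo\geq\Omega/(\nnum-\sqrt\nnum)$) shows that, for the estimator \eqref{eq: hatvap Omega vanden mas 1}, the risk of this loss is at most its asymptotic value, which by Theorem~\ref{teo: Feln incl Felp} equals $\int_0^\infty\phi(\nu)\mathbf 1_{(t,\infty)}(\Omega/\nu)\,\diff\nu$ (note the function $\nden\mapsto\Omega/(\nden+1)$ lies in $\Feln$ with limit $\Omega$, and the indicator losses trivially satisfy Assumptions~\ref{assum: L var acot}--\ref{assum: L cota O}). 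Interchanging $\E$ with the outer non-negative combination by Tonelli's theorem and then reassembling yields $\E[\tilde L_\mathrm{p}(\hatvap/p)]\leq\int_0^\infty\phi(\nu)\tilde L_\mathrm{p}(\Omega/\nu)\,\diff\nu$ for every $p\in(0,1)$. The same argument applied to $\tilde L_\mathrm{n}$ uses indicators $\mathbf 1_{(0,s)}$ with $s\leq\upsilon$, each of the form \eqref{eq: L conf gen} with $A_1 = 0$ and $1/\fdos = s$; since $s\leq\upsilon\leq\Omega/(\nnum+\sqrt\nnum+1)$, i.e.\ $\fdos\geq(\nnum+\sqrt\nnum+1)/\Omega$, the companion one-sided bound on $\Pr[\hatvap<p/\fdos]$ gives $\E[\tilde L_\mathrm{n}(\hatvap/p)]\leq\int_0^\infty\phi(\nu)\tilde L_\mathrm{n}(\Omega/\nu)\,\diff\nu$. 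Adding the two inequalities and restoring the constant $c$ gives $\risk(p)\leq\riskas$ for all $p\in(0,1)$; the hypotheses $\nnum\geq3$ and Assumptions~\ref{assum: L disc fin}, \ref{assum: L cota Theta}, \ref{assum: L crec} are exactly what make the cited proposition applicable to each piece and keep $\riskas$ finite.

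The main obstacle I anticipate is the bookkeeping of the decomposition rather than any new analytic difficulty: making the layer-cake representations of the monotone pieces $\tilde L_\mathrm{n},\tilde L_\mathrm{p}$ precise, in particular the behaviour at the finitely many jump points (so that the indicator summands match \eqref{eq: L conf gen} exactly and the value taken there by $L$ does not spoil the non-asymptotic bound, which at worst requires a short passage to the limit $\funo\uparrow t$ or $1/\fdos\downarrow s$), and checking that the Lebesgue--Stieltjes measures $\diff\tilde L_\mathrm{p}$ and $-\diff\tilde L_\mathrm{n}$ are $\sigma$-finite so that the interchange of $\E$ with the outer combination is legitimate. The core inequality itself is entirely inherited from \citet{Mendo08c_env}; everything else is a reduction.
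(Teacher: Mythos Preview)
Your approach is essentially the same as the paper's: both subtract the constant on $[\upsilon,\upsilon']$, split the remainder into its left (non-increasing) and right (non-decreasing) monotone pieces, decompose each piece as a non-negative superposition of one-sided indicator losses of the form \eqref{eq: L conf gen}, and apply the one-sided bounds from \citet[proposition~3]{Mendo08c_env} (packaged in the paper as Lemma~\ref{lemma: conf gen gar}) to each indicator. The only difference is cosmetic: you invoke the continuous layer-cake identity and Tonelli, whereas the paper uses a discrete $\epsilon$-step approximation $L_\epsilon\leq L$ (summing indicators at levels $i\epsilon$) and then lets $\epsilon\downarrow 0$; the bookkeeping you flag (jump points, interchange of sum/integral and expectation) is exactly what the paper handles via the monotone convergence theorem and absolute convergence of the double series.
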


It is noted that conditions~\ref{prop item: const centro}--\ref{prop item: non-dec der}
% *!* \ref{prop item: const centro}, \ref{prop item: non-inc izq} and \ref{prop item: non-dec der}
of Proposition~\ref{prop: conf gen2 gar} imply that
% property~\ref{assum item: var acot} in
Assumption~\ref{assum: L var acot} necessarily holds, and also imply that $L(\upsilon-) \geq L(\upsilon+)$ and $L(\upsilon'-) \leq L(\upsilon'+)$.

The following result, analogous to Corollary~\ref{cor: opt global gar NMSE}, is obtained for the estimator
\begin{equation}
\label{eq: hatvap Omega opt vanden mas 1}
\hatvap = \frac{\Omega^*}{\vanden+1}.
\end{equation}

\begin{corollary}
\label{cor: conf gen2 gar opt}
Given $\nnum \geq 3$ and a loss function that satisfies Assumptions~\ref{assum: L var acot}, \ref{assum: L disc fin}, \ref{assum: L cota Theta} and \ref{assum: L crec}, let $\Omega^*$ be as determined by Theorem~\ref{teo: min resp Omega}. If conditions~\ref{prop item: const centro}--\ref{prop item: non-dec der}
% *!* \ref{prop item: const centro}, \ref{prop item: non-inc izq} and \ref{prop item: non-dec der}
in Proposition~\ref{prop: conf gen2 gar} hold for some $\upsilon$, $\upsilon'$ with
\begin{equation}
\label{eq: cond conf gen2 gar opt}
\upsilon \leq \frac{\Omega^*}{\nnum+\sqrt\nnum+1}, \quad
\upsilon' \geq \frac{\Omega^*}{\nnum-\sqrt\nnum},
\end{equation}
the estimator \eqref{eq: hatvap Omega opt vanden mas 1} minimizes $\sup_{p \in (0,1)} \risk(p)$ among all (possibly randomized) estimators based on inverse binomial sampling.
\end{corollary}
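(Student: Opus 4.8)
The plan is to combine two results already in hand: Proposition~\ref{prop: conf gen2 gar}, which yields an upper bound on $\sup_{p \in (0,1)} \risk(p)$ for the proposed estimator, and Theorem~\ref{teo: opt}, which yields a matching lower bound valid for every estimator.

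First I would verify that Proposition~\ref{prop: conf gen2 gar} can be invoked with the particular choice $\Omega = \Omega^*$. Since the loss function satisfies Assumptions~\ref{assum: L var acot}, \ref{assum: L disc fin}, \ref{assum: L cota Theta} and \ref{assum: L crec}, and Assumption~\ref{assum: L cota Theta} implies Assumption~\ref{assum: L cota O}, Theorem~\ref{teo: min resp Omega} guarantees that $\Omega^*$ is a well-defined element of $\mathbb R^+$. The hypothesis that conditions~\ref{prop item: const centro}--\ref{prop item: non-dec der} of Proposition~\ref{prop: conf gen2 gar} hold for some $\upsilon, \upsilon'$ satisfying \eqref{eq: cond conf gen2 gar opt} is precisely the instance of those conditions obtained by replacing $\Omega$ with $\Omega^*$ in \eqref{eq: cond conf gen2 gar}. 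Hence Proposition~\ref{prop: conf gen2 gar} applies with $\Omega = \Omega^*$ and gives, for all $p \in (0,1)$, $\risk(p) \leq \riskas$, where $\riskas$ is evaluated at $\Omega^*$ and therefore equals $\risk^*$ by \eqref{eq: riskas opt nu}. Consequently the estimator \eqref{eq: hatvap Omega opt vanden mas 1} satisfies $\sup_{p \in (0,1)} \risk(p) \leq \risk^*$.

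Next I would show that no estimator can do strictly better. For any estimator defined by $\festa \in \FeA$ one always has $\limsup_{p \rightarrow 0} \risk(p) \leq \sup_{p \in (0,1)} \risk(p)$, since $\limsup_{p \rightarrow 0} \risk(p)$ is the limit as $\delta \rightarrow 0^+$ of $\sup_{p \in (0,\delta)} \risk(p)$ and each of these suprema is at most $\sup_{p \in (0,1)} \risk(p)$ (the inequality being trivial when the latter is infinite). By Theorem~\ref{teo: opt}, $\limsup_{p \rightarrow 0} \risk(p) \geq \risk^*$ for every such estimator, so also $\sup_{p \in (0,1)} \risk(p) \geq \risk^*$. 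Together with the upper bound of the previous step, this shows that $\sup_{p \in (0,1)} \risk(p)$ attains its minimum over $\FeA$ at the estimator \eqref{eq: hatvap Omega opt vanden mas 1}, with minimum value $\risk^*$, which is the claim.

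I do not expect any genuine obstacle: the argument is a direct assembly of Proposition~\ref{prop: conf gen2 gar} and Theorem~\ref{teo: opt}. The only point requiring attention is the bookkeeping of the hypotheses --- in particular, that $\Omega^*$ is well defined (which is where Assumption~\ref{assum: L crec} and Theorem~\ref{teo: min resp Omega} enter) and that \eqref{eq: cond conf gen2 gar opt} is exactly the version of \eqref{eq: cond conf gen2 gar} needed to apply Proposition~\ref{prop: conf gen2 gar} at $\Omega = \Omega^*$.
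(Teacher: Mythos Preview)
Your proposal is correct and follows exactly the approach the paper indicates: the paper does not give an explicit proof of this corollary but states it is obtained analogously to Corollary~\ref{cor: opt global gar NMSE}, i.e.\ by combining Theorem~\ref{teo: opt} (lower bound $\risk^*$ on $\limsup_{p\to 0}\risk(p)$, hence on $\sup_{p\in(0,1)}\risk(p)$, for any estimator) with Proposition~\ref{prop: conf gen2 gar} applied at $\Omega=\Omega^*$ (upper bound $\risk^*$ on $\sup_{p\in(0,1)}\risk(p)$ for the specific estimator~\eqref{eq: hatvap Omega opt vanden mas 1}). Your careful check that \eqref{eq: cond conf gen2 gar opt} is precisely \eqref{eq: cond conf gen2 gar} with $\Omega=\Omega^*$, and that $\Omega^*$ is well defined via Theorem~\ref{teo: min resp Omega}, fills in the routine details the paper leaves implicit.
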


This establishes that, under the stated hypotheses, the estimator \eqref{eq: hatvap Omega opt vanden mas 1} is minimax, i.e.~minimizes the risk that can be globally guaranteed not to be exceeded.

\appendix

\section{Proofs}
\label{parte: proofs}
% *·* Título

% Proofs to all results are given in an extended version of this paper \citep{Mendo11_env}. This appendix states all lemmas required for the proofs.
% *·*

The following definitions are necessary:
\begin{align}
\label{eq: Phi}
\Phi(p,\nu) & = \frac{(1-p)^{\nu/p-\nnum}}{(\nnum-1)!} \prod_{i=1}^{\nnum-1} (\nu-ip), \quad p \in (0,1), \ \nu \in \mathbb R^+, \\
\label{eq: riskas sop fin}
\riskasfs & = \int_{\nnum/\csopfin}^{\nnum \csopfin} \phi(\nu) L( \Omega/\nu ) \,\diff \nu, \quad \Omega, \csopfin \in \mathbb R^+.
\end{align}

% Lemmas~\ref{lemma: phi} and \ref{lemma: Phi conv unif} are used in the proof of Theorem~\ref{teo: Feln incl Felp}.
% *·*

\begin{lemma}[{\citet[lemma 1]{Mendo10}}]
%\emph{(\cite[lemma 1]{Mendo10})}
\label{lemma: phi}
For any $\nu \in \mathbb R^+$, $0<\phi(\nu)<1$.
\end{lemma}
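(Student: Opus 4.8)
The plan is to treat the two inequalities separately: the lower bound is immediate, and the upper bound reduces to a single elementary observation about the exponential series.

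First, positivity. By the definition \eqref{eq: phi}, $\phi(\nu) = \nu^{\nnum-1}\exp(-\nu)/(\nnum-1)!$, and for $\nu \in \mathbb R^+$ each of the three factors $\nu^{\nnum-1}$, $\exp(-\nu)$ and $1/(\nnum-1)!$ is strictly positive; hence $\phi(\nu) > 0$. This needs no further argument.

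Second, the bound $\phi(\nu) < 1$. I would rewrite it equivalently as $\exp(\nu) > \nu^{\nnum-1}/(\nnum-1)!$. This follows at once from the power-series representation $\exp(\nu) = \sum_{k=0}^\infty \nu^k/k!$: since $\nu > 0$, every term of the series is strictly positive, so the whole sum strictly exceeds the single term with index $k = \nnum-1$, which is precisely $\nu^{\nnum-1}/(\nnum-1)!$. Multiplying the resulting strict inequality through by $\exp(-\nu) > 0$ yields $\phi(\nu) < 1$, completing the proof.

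There is essentially no obstacle here; the only point meriting a moment's care is that the series inequality is genuinely strict for every admissible $\nnum \in \mathbb N$, including $\nnum = 1$: in that case the $k = \nnum-1$ term is the constant term $1$, and strictness is still supplied by the (positive) $k=1$ term $\nu$. An alternative route would be to locate the maximum of $\nu \mapsto \nu^{\nnum-1}\exp(-\nu)$ at $\nu = \nnum-1$ by calculus and then invoke a Stirling-type estimate $(\nnum-1)^{\nnum-1} < (\nnum-1)!\,\exp(\nnum-1)$, but this is less direct and forces a separate treatment of $\nnum = 1$, so I would prefer the series argument.
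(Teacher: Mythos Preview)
Your argument is correct. The positivity is immediate, and the upper bound follows cleanly from the strict inequality $\exp(\nu)=\sum_{k\ge 0}\nu^k/k!>\nu^{\nnum-1}/(\nnum-1)!$ for $\nu>0$, including the edge case $\nnum=1$ that you handled explicitly.

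As for comparison: the paper does not actually supply a proof of this lemma; it simply cites \citet[lemma~1]{Mendo08c_env}. So your self-contained series argument is a genuine addition rather than a duplication. The alternative calculus-plus-Stirling route you mention is indeed less economical and, as you note, would require separate handling of $\nnum=1$; the series argument is the cleaner choice.
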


\begin{lemma}
\label{lemma: Phi conv unif}
Given $\nu_1, \nu_2 \in \mathbb R^+$ with $\nu_2 > \nu_1$, for $\nu \in [\nu_1,\nu_2]$ the function $\Phi(p,\nu)$ converges uniformly to $\phi(\nu)$ as $p \rightarrow 0$.
\end{lemma}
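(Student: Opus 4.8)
The plan is to factor $\Phi(p,\nu)$ into pieces whose limits are transparent and to verify that each converges uniformly on the compact interval $[\nu_1,\nu_2]$, restricting throughout to $p$ in a fixed neighbourhood of $0$ (say $0<p\le 1/2$), which is all that is needed for a limit as $p\to 0$. Writing
\[
\Phi(p,\nu) = \frac{1}{(\nnum-1)!}\,(1-p)^{-\nnum}\,(1-p)^{\nu/p}\,\prod_{i=1}^{\nnum-1}(\nu-ip),
\]
the factor $(1-p)^{-\nnum}$ does not depend on $\nu$ and tends to $1$; the product $\prod_{i=1}^{\nnum-1}(\nu-ip)$ is a polynomial in $p$ whose coefficients are polynomials in $\nu$, so for $p\le 1/2$ its difference from $\nu^{\nnum-1}$ is bounded by $p$ times a constant depending only on $\nu_2$ and $\nnum$, hence it converges to $\nu^{\nnum-1}$ uniformly on $[\nu_1,\nu_2]$.

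The only factor needing real care is $(1-p)^{\nu/p} = \exp\bigl((\nu/p)\log(1-p)\bigr)$. I would write $(\nu/p)\log(1-p) = -\nu + \nu\,r(p)$, where $r(p) = \bigl(\log(1-p)+p\bigr)/p = -\sum_{k\ge 2} p^{k-1}/k$; for $p\le 1/2$ this gives $|r(p)|\le p/(1-p)\le 2p$, so $r(p)\to 0$ as $p\to 0$ independently of $\nu$. Consequently $(1-p)^{\nu/p} = e^{-\nu}\exp(\nu\,r(p))$, and since $\nu$ ranges over the bounded set $[\nu_1,\nu_2]$ the quantity $\nu\,r(p)$ tends to $0$ uniformly, so $\exp(\nu\,r(p))\to 1$ uniformly; as $e^{-\nu}$ is bounded on $[\nu_1,\nu_2]$, it follows that $(1-p)^{\nu/p}\to e^{-\nu}$ uniformly on $[\nu_1,\nu_2]$.

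Finally I would combine the three factors. Each is, for $p$ small, a bounded function of $\nu\in[\nu_1,\nu_2]$ converging uniformly to a bounded limit, and a finite product of such sequences converges uniformly to the product of the limits (add and subtract one factor at a time, bounding the remaining factors uniformly). This yields $\Phi(p,\nu)\to \nu^{\nnum-1}e^{-\nu}/(\nnum-1)! = \phi(\nu)$ uniformly on $[\nu_1,\nu_2]$, as claimed. The main obstacle is the estimate for $(1-p)^{\nu/p}$: once the expansion $\log(1-p) = -p - p^2/2 - \cdots$ is used to isolate the $-\nu$ term and bound the remainder uniformly in $\nu$, the rest is routine, the boundedness of $[\nu_1,\nu_2]$ being exactly what makes the exponential factor behave uniformly.
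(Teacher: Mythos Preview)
Your argument is correct. The factorization
\[
\Phi(p,\nu)=\frac{(1-p)^{-\nnum}}{(\nnum-1)!}\,(1-p)^{\nu/p}\prod_{i=1}^{\nnum-1}(\nu-ip)
\]
is valid, each factor is handled properly, and the estimate $|r(p)|\le p/(1-p)$ from the Taylor series of $\log(1-p)$ gives exactly the uniform control on $(1-p)^{\nu/p}$ that is needed on a bounded $\nu$-interval. The final step, that a finite product of uniformly convergent, uniformly bounded families converges uniformly to the product of the limits, is standard and correctly invoked.

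As for comparison: the paper does not actually prove this lemma directly. It observes that the statement is equivalent to uniform convergence of $\Phi(p_k,\nu)$ along every sequence $p_k\to 0$, and then cites \citet[lemma~3]{Mendo08c_env} for that sequential version. Your proof is therefore more self-contained than what the paper offers here; it supplies the elementary analysis that the paper defers to an external reference. The sequential reformulation buys nothing mathematically in this case---your direct $\epsilon$-$\delta$-style argument via the explicit remainder $r(p)$ is both shorter and more transparent than routing through sequences.
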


\begin{proof}
The lemma is equivalent to the result that $\Phi(p_k,\nu)$ converges uniformly on $\nu \in [\nu_1,\nu_2]$ for any sequence $(p_k)$ such that $p_k \in (0,1)$, $p_k \rightarrow 0$, which is proved by \citet[lemma 3]{Mendo10}.
%\qed
\end{proof}

\begin{proof}[Proof of Theorem~\ref{teo: Feln incl Felp}]
%\paragraph{Proof of Theorem~\ref{teo: Feln incl Felp}}
The risk $\risk(p)$ tends to $\riskas$ for $p \rightarrow 0$ if and only if $\risk(p_k)$ converges to $\riskas$ for every sequence $(p_k)$ such that $p_k \in (0,1)$, $p_k \rightarrow 0$ \citep[theorem 4.12]{Apostol74}. Consider an arbitrary sequence of this type. Let $\risk^k = \risk(p_k)$, and let $f_k$ denote the probability function $f$ for $p=p_k$. Defining $\phi_k(\nu) = \Phi(p_k,\nu)$, it is seen from \eqref{eq: f} and \eqref{eq: Phi} that $f_k(\nden) = p_k \phi_k(\nden p_k)$.

From property~\ref{assum item: cota O, cero} in Assumption~\ref{assum: L cota O}, there exist $\Kcero \in \mathbb R$ and $\Mcero, \xcero \in \mathbb R^+$ such that
\begin{equation}
\label{eq: cota, cero}
L(x) < \Mcero x^\Kcero \quad \text{for } x < \xcero.
\end{equation}
Without loss of generality, it will be assumed that $\Kcero < 0$. On the other hand, property~\ref{assum item: cota O, inf} implies that there exist  $\Kinf < \nnum$ and $\Minf, \xinf \in \mathbb R^+$ such that
\begin{equation}
\label{eq: cota, inf}
L(x) < \Minf x^\Kinf \quad \text{for } x > \xinf.
\end{equation}

The risk $\risk^k$ is expressed from \eqref{eq: risk non-rand} as
\begin{equation}
\label{eq: risk 1}
\risk^k = \sum_{\nden=\nnum}^\infty f_k(\nden) L\left(\frac{\fest(\nden)}{p_k}\right).
\end{equation}
Given $\numenos, \numas \in \mathbb R^+$ with $\numas > \numenos$, let the set $I_k$ be defined as
\begin{equation}
\label{eq: I k}
I_k = \{\lfloor \numenos/p_k\rfloor, \lfloor \numenos/p_k\rfloor+1, \ldots, \lceil \numas/p_k\rceil\}.
\end{equation}
Under the assumption
\begin{equation}
\label{eq: p k numenos nnum}
p_k \leq \frac{\numenos}{\nnum},
\end{equation}
which implies that $\min I_k = \lfloor \numenos/p_k\rfloor \geq \nnum$, the following definition can be made:
\begin{equation}
\label{eq: risk0 1}
\risk^k_0 = \sum_{\nden \in I_k} f_k(\nden) L\left(\frac{\fest(\nden)}{p_k}\right).
\end{equation}
The proof will proceed as follows. With a suitable choice of $\numenos$ and $\numas$, and for $k$ sufficiently large, the term $\risk^k_0$ can be made arbitrarily close to $\riskas$, as will be seen. On the other hand, the difference $\risk^k - \risk^k_0$ will be decomposed as the sum of three terms, each of which can be made arbitrarily small for sufficiently large $k$. Adequate bounds will be derived for each of these four terms, and then the bounds will be suitably combined to show that $\risk^k$ tends to $\riskas$ as $k \rightarrow \infty$.

In the following, $\nden p_k$ will be denoted as $\ndenpk$. Assuming
\begin{equation}
\label{eq: p k numenos nnum 1}
p_k \leq \frac{\numenos}{\nnum+1},
\end{equation}
(which obviously implies \eqref{eq: p k numenos nnum}), it is easily seen that for $\nden \in I_k$, $\ndenpk$ is contained in the interval $I$ given as
\begin{equation}
I = \left[ \frac{\nnum\numenos}{\nnum+1}, \numas + \frac{\numenos}{\nnum+1} \right].
\end{equation}
Lemma~\ref{lemma: Phi conv unif} implies that the sequence of functions $(\phi_k)$ converges uniformly to $\phi$ for $\nu \in I$; that is, given $\epsilon_\epsf > 0$, there exists $k_\epsf$ such that $|\phi_k(\nu)-\phi(\nu)|<\epsilon_\epsf$ for $\nu \in I$, $k \geq k_\epsf$. Thus $f_k(\nden) = p_k\phi(\ndenpk) + p_k\theta_{\epsf,\nden}$ with $|\theta_{\epsf,\nden}|<\epsilon_\epsf$ for $\nden \in I_k$, $k \geq k_\epsf$. In these conditions, since $\phi(\ndenpk) > 0$ (Lemma~\ref{lemma: phi}), \eqref{eq: risk0 1} can be expressed as
\begin{equation}
\label{eq: risk0 2}
\risk^k_0 = \sum_{\nden \in I_k} p_k \phi(\ndenpk) \left( 1 + \frac{\theta_{\epsf,\nden}}{\phi(\ndenpk)} \right) L\left(\frac{\fest(\nden)}{p_k}\right).
\end{equation}
On the other hand, since $\nden \fest(\nden) \rightarrow \Omega$ as $\nden \rightarrow \infty$, given $\epsilon_\epsg>0$ there exists $\nden_\epsg \geq \nnum$ such that $|\nden \fest(\nden) - \Omega| < \epsilon_\epsg$ for all $\nden \geq \nden_\epsg$, i.e.~$\fest(\nden) = (\Omega+\theta_{\epsg,\nden})/\nden$ with $|\theta_{\epsg,\nden}|<\epsilon_\epsg$. Therefore, assuming
\begin{equation}
\label{eq: p k numenos nden epsg}
p_k \leq \frac{\numenos}{\nden_\epsg},
\end{equation}
which implies that $\min I_k \geq \nden_\epsg$, \eqref{eq: risk0 2} can be written as
\begin{equation}
\label{eq: risk0 2 bis}
\risk^k_0 = \sum_{\nden \in I_k} p_k \phi(\ndenpk) \left( 1 + \frac{\theta_{\epsf,\nden}}{\phi(\ndenpk)} \right) L\left( \frac{\Omega+\theta_{\epsg,\nden}}{\ndenpk} \right).
\end{equation}
Denoting $m_\phi = \min_{\nu \in I} \phi(\nu)$, which is non-zero because of Lemma~\ref{lemma: phi}, it stems from \eqref{eq: risk0 2 bis} that
\begin{equation}
\label{eq: risk0 3 bis}
\risk^k_0 = \left( 1 + \frac{\theta_{\epsf}}{m_\phi} \right) \sum_{\nden \in I_k} p_k\phi(\ndenpk) L\left( \frac{\Omega+\theta_{\epsg,\nden}}{\ndenpk} \right)
\end{equation}
for some $\theta_{\epsf}$ with $|\theta_{\epsf}|<\epsilon_\epsf$.

Assuming $\epsilon_\epsg \leq \Omega/2$, and taking into account \eqref{eq: p k numenos nnum 1}, it follows from \eqref{eq: I k} that for $\nden \in I_k$, both $\Omega/\ndenpk$ and $(\Omega+\theta_{\epsg,\nden})/\ndenpk$ are contained in the interval
\begin{equation}
I' = \left[ \frac{\Omega}{2(\numas+\numenos/(\nnum+1))}, \frac{3(\nnum+1) \Omega}{2\nnum\numenos} \right].
\end{equation}
According to Assumption~\ref{assum: L disc fin compacto},
% property~\ref{assum item: disc fin compacto}
$L$ has a finite number of discontinuities in $I'$. Let $d$ denote this number. Each of these discontinuities, located at $x_1,\ldots,x_d$, may be either a jump or a removable discontinuity. Let
\begin{equation}
J = \sum_{i=1}^d \left( \left| \lim_{x \rightarrow {x_i}^-} L(x) - L(x_i) \right| +
\left| \lim_{x \rightarrow {x_i}^+} L(x) - L(x_i) \right| \right).
\end{equation}
Thus $J$ represents the contribution of all discontinuities to the total variation of $L$ on $I'$.

The function $L$ on the interval $I'$ can be decomposed as the sum of a continuous function $L_\mathrm c$ and a piecewise constant function $L_\mathrm d$, the latter of which has discontinuities at $x_1, \ldots, x_d$. By the Heine-Cantor theorem \citep[theorem 4.47]{Apostol74}, $L_\mathrm c$ is uniformly continuous on $I'$. Since $|\theta_{\epsg,\nden}| < \epsilon_\epsg$, it follows that for any $\epsilon_\epsc>0$ there exists $\delta_\epsc$ such that $|L_\mathrm c((\Omega+\theta_{\epsg,\nden})/\ndenpk)-L_\mathrm c(\Omega/\ndenpk)| < \epsilon_\epsc$ for $\epsilon_\epsg < \delta_\epsc$, for all $\nden \in I_k$, and for all $k$. Regarding $L_\mathrm d$, let
\begin{equation}
U_k = \left\{
\nden \in I_k \st L_\mathrm d\left(\frac{\Omega+\theta_{\epsg,\nden}}{\ndenpk} \right) \neq L_\mathrm d\left(\frac{\Omega}{\ndenpk}\right)
\right\}.
\end{equation}
For $\nden \in I_k \setminus U_k$,
\begin{equation}
\label{eq: risk0 3 ter}
\left|L\left(\frac{\Omega+\theta_{\epsg,\nden}}{\ndenpk}\right)-L\left(\frac{\Omega}{\ndenpk}\right)\right| < \epsilon_\epsc.
\end{equation}
For each $\nden \in U_k$, $|L_\mathrm d((\Omega+\theta_{\epsg,\nden})/\ndenpk)-L_\mathrm d(\Omega/\ndenpk)|$ can be at at most $J$, and thus
\begin{equation}
\label{eq: risk0 3 quater}
\left|L\left(\frac{\Omega+\theta_{\epsg,\nden}}{\ndenpk}\right)-L\left(\frac{\Omega}{\ndenpk}\right)\right| < \epsilon_\epsc + J.
\end{equation}
Let $\chi_k$ denote the number of elements of $U_k$ divided by that of $I_k$. Taking into account that the latter is less than $(\numas - \numenos)/p_k+3 < (\numas - \numenos + 3)/p_k$ and that the function $\phi$ is upper-bounded by $1$ (Lemma~\ref{lemma: phi}), from \eqref{eq: risk0 3 ter} and \eqref{eq: risk0 3 quater} it follows that, for $\epsilon_\epsg < \delta_\epsc$,
\begin{multline}
\left| \sum_{\nden \in I_k} p_k\phi(\ndenpk) L\left( \frac{\Omega+\theta_{\epsg,\nden}}{\ndenpk} \right) -
\sum_{\nden \in I_k} p_k\phi(\ndenpk) L\left( \frac{\Omega}{\ndenpk} \right) \right| \\
< (\numas - \numenos + 3)(\epsilon_\epsc + J\chi_k).
\end{multline}
It is easily seen that $\lim_{k \rightarrow \infty}\chi_k$ can be made arbitrarily small by taking $\epsilon_\epsg$ sufficiently small. Thus, given $\epsilon_\epsd$, there exist $\delta_\epsd$, $k_\epsd$ such that $\chi_k < \epsilon_\epsd$ for $\epsilon_\epsg < \delta_\epsd$, $k \geq k_\epsd$. Consequently, for $\epsilon_\epsg < \min\{\delta_\epsc, \delta_\epsd\}$ and $k \geq k_\epsd$,
\begin{multline}
\label{eq: risk0 3 cinco}
\left| \sum_{\nden \in I_k} p_k\phi(\ndenpk) L\left( \frac{\Omega+\theta_{\epsg,\nden}}{\ndenpk} \right) -
\sum_{\nden \in I_k} p_k\phi(\ndenpk) L\left( \frac{\Omega}{\ndenpk} \right) \right| \\
< (\numas - \numenos + 3)(\epsilon_\epsc + J \epsilon_\epsd).
\end{multline}
From \eqref{eq: risk0 3 bis} and \eqref{eq: risk0 3 cinco},
\begin{equation}
\label{eq: risk0 4}
\risk^k_0 = \left( 1 + \frac{\theta_{\epsf}}{m_\phi} \right)
\left[
\sum_{\nden \in I_k} p_k\phi(\ndenpk) L\left( \frac{\Omega}{\ndenpk} \right)
+ (\numas - \numenos + 3)(\theta_\epsc+J\theta_\epsd)
\right]
\end{equation}
with $|\theta_\epsc| < \epsilon_\epsc$, $|\theta_\epsd| < \epsilon_\epsd$. The sum over $\nden$ in \eqref{eq: risk0 4} tends to $\int_{\numenos}^{\numas} \phi(\nu) L(\Omega/\nu) \,\diff\nu$ as $k \rightarrow \infty$. Thus for any $\epsilon_\epsi>0$ there exists $k_\epsi$ such that for all $k \geq k_\epsi$
\begin{equation}
\left| \sum_{\nden \in I_k} p_k\phi(\ndenpk) L\left( \frac{\Omega}{\ndenpk} \right) - \int_{\numenos}^{\numas} \phi(\nu) L\left( \frac{\Omega}{\nu} \right) \,\diff\nu \right| < \epsilon_\epsi,
\end{equation}
and therefore \eqref{eq: risk0 4} can be expressed for $k \geq \max\{k_\epsd,k_\epsi\}$ as
\begin{equation}
\label{eq: risk0 5}
\risk^k_0 = \left( 1 + \frac{\theta_{\epsf}}{m_\phi} \right)
\left[
\int_{\numenos}^{\numas} \phi(\nu) L\left(\frac \Omega \nu\right) \,\diff\nu + \theta_\epsi + (\numas - \numenos + 3)(\theta_\epsc+J\theta_\epsd)
\right]
\end{equation}
with $|\theta_\epsi|<\epsilon_\epsi$. In addition, given any $\epsilon_\epst$, there exist $\numenos_\epst$, $\numas_\epst$ with $\numas_\epst > \numenos_\epst$ such that $|\riskas - \int_{\numenos}^{\numas} \phi(\nu) L(\Omega/\nu) \,\diff\nu|<\epsilon_\epst$ for $0<\numenos \leq \numenos_\epst$, $\numas \geq \numas_\epst$. Thus, in these conditions,
\begin{equation}
\label{eq: risk0 6}
\risk^k_0 = \left( 1 + \frac{\theta_{\epsf}}{m_\phi} \right)
\left[
\riskas + \theta_\epst + \theta_\epsi + (\numas - \numenos + 3)(\theta_\epsc+J\theta_\epsd)
\right].
\end{equation}
with $|\theta_\epst|<\epsilon_\epst$.

The difference $\risk^k - \risk^k_0$ can be expressed as $\risk^k_1+\risk^k_2+\risk^k_3$, where
\begin{align}
\risk^k_1 & = \sum_{\nden=\nnum}^{\nden_\epsg-1} f_k(\nden) L\left(\frac{\fest(\nden)}{p_k}\right), \\
\label{eq: risk-hat 0}
\risk^k_2 & = \sum_{\nden=\nden_\epsg}^{\lfloor \numenos/p_k\rfloor-1} f_k(\nden) L\left(\frac{\fest(\nden)}{p_k}\right), \\
\label{eq: risk+ 0}
\risk^k_3 & = \sum_{\lceil \numas/p_k\rceil+1}^\infty f_k(\nden) L\left(\frac{\fest(\nden)}{p_k}\right).
\end{align}
Regarding the term $\risk^k_1$, from \eqref{eq: f} it is seen that
\begin{equation}
\label{eq: f k nden <}
f_k(\nden) < \frac{\nden^{\nnum-1}p_k^\nnum}{(\nnum-1)!}
% = \frac{\ndenpk^{\nnum-1}p_k}{(\nnum-1)!},
\end{equation}
and therefore
\begin{equation}
\label{eq: risk-check 2}
0 < \risk^k_1 <
\sum_{\nden=\nnum}^{\nden_\epsg-1} \frac{\nden^{\nnum-1} p_k^\nnum}{(\nnum-1)!} L\left(\frac{\fest(\nden)}{p_k}\right) <
\frac{\nden_\epsg^{\nnum-1} p_k^\nnum}{(\nnum-1)!} \sum_{\nden=\nnum}^{\nden_\epsg-1} L\left(\frac{\fest(\nden)}{p_k}\right).
\end{equation}
The fact that $\lim_{\nden \rightarrow \infty} \nden \fest(\nden)$ exists and is finite implies that the function $\fest$ is upper-bounded by some constant $M_\mathrm g$. For $\fest(\nden)/p_k > \xinf$, \eqref{eq: cota, inf} implies that  $L(\fest(\nden)/p_k) < \Minf (M_\mathrm g/p_k)^\Kinf$. On the other hand, $\fest(\nden)/p_k$ in \eqref{eq: risk-check 2} is greater than $m_\mathrm g = \min\{\fest(\nnum),\fest(\nnum+1),\ldots,\fest(\nden_\epsg-1)\}$; and for $\fest(\nden)/p_k \in (m_\mathrm g, \xinf]$,
% property~\ref{assum item: var acot}
Assumption~\ref{assum: L var acot} implies that $L(\fest(\nden)/p_k)$ is lower than some value $M'_\mathrm g$, where both $m_\mathrm g$ and $M'_\mathrm g$ depend on $\nden_\epsg$. Thus, for the range of values of $\nden$ in \eqref{eq: risk-check 2},
\begin{equation}
\label{eq: risk-check 3}
L\left(\frac{\fest(\nden)}{p_k}\right) < \max \left\{ \frac{\Minf M_\mathrm g^\Kinf}{p_k^\Kinf}, M'_\mathrm g \right\} < \frac{\max\{\Minf M_\mathrm g^\Kinf, M'_\mathrm g\}}{p_k^\Kinf}.
\end{equation}
The sum in the right-most part of \eqref{eq: risk-check 2} is either empty or else it contains $\nden_\epsg-\nnum < \nden_\epsg$ terms. Therefore, using \eqref{eq: risk-check 3},
\begin{equation}
\label{eq: risk-check 4}
0 \leq \risk^k_1 <
% (\nden_\epsg-\nnum) \frac{\nden_\epsg^{\nnum-1} \max\{\Minf M_\mathrm g^\Kinf, M'_\mathrm g\}}{(\nnum-1)!} p_k^{\nnum-\Kinf} <
\frac{\nden_\epsg^{\nnum} \max\{\Minf M_\mathrm g^\Kinf, M'_\mathrm g\}}{(\nnum-1)!} p_k^{\nnum-\Kinf}.
\end{equation}

Regarding $\risk^k_2$, the sum in \eqref{eq: risk-hat 0} is empty for $\numenos/p_k < \nden_\epsg+1$. If it is non-empty, since $\nden \geq \nden_\epsg$, the term $\fest(\nden)/p_k$ can be written as $(\Omega+\theta_{\epsg,\nden})/\ndenpk$ with $|\theta_{\epsg,\nden}|<\epsilon_\epsg$. Therefore, taking into account \eqref{eq: f k nden <},
\begin{equation}
\label{eq: risk-hat 2}
0 \leq \risk^k_2 <
\frac{p_k}{(\nnum-1)!} \sum_{\nden=\nden_\epsg}^{\lfloor \numenos/p_k\rfloor-1} \ndenpk^{\nnum-1} L \left( \frac{\Omega+\theta_{\epsg,\nden}}{\ndenpk} \right).
\end{equation}
Since $\epsilon_\epsg \leq \Omega/2$, it holds that $\Omega/2 < \Omega+\theta_{\epsg,\nden} < 3\Omega/2$, and thus for the range of values of $\nden$ in \eqref{eq: risk-hat 0}
\begin{equation}
\frac{3\Omega}{2\ndenpk} > \frac{\Omega+\theta_{\epsg,\nden}}{\ndenpk} > \frac{\Omega}{2\ndenpk} > \frac{\Omega}{2\numenos}.
\end{equation}
Therefore, assuming $\Omega/(2\numenos) \geq \xinf$, for $\nden$ within the indicated range it stems from \eqref{eq: cota, inf} that
\begin{equation}
\label{eq: risk-hat 3}
L \left( \frac{\Omega+\theta_{\epsg,\nden}}{\ndenpk} \right)
< \Minf \left( \frac{\Omega+\theta_{\epsg,\nden}}{\ndenpk} \right)^\Kinf
< \Minf \left( \frac{3\Omega}{2\ndenpk} \right)^\Kinf.
\end{equation}
Substituting \eqref{eq: risk-hat 3} into \eqref{eq: risk-hat 2},
\begin{equation}
\label{eq: risk-hat 3 bis}
0 \leq \risk^k_2 <
\frac{\Minf (3\Omega/2)^\Kinf p_k}{(\nnum-1)!} \sum_{\nden=\nden_\epsg}^{\lfloor \numenos/p_k\rfloor-1} \ndenpk^{\nnum-\Kinf-1}
< \frac{\Minf (3\Omega/2)^\Kinf}{(\nnum-1)!} \numenos^{\nnum-\Kinf}.
\end{equation}
Consider $\epsilon_\epst'>0$  arbitrary. Since $\Kinf<\nnum$, defining
\begin{equation}
\numenos_\epst' = \left( \frac{(\nnum-1)!\, \epsilon_\epst'}{\Minf (3\Omega/2)^\Kinf} \right)^{1/(\nnum-\Kinf)}
\end{equation}
it follows from \eqref{eq: risk-hat 3 bis} that for any $\numenos \leq \numenos_\epst'$
\begin{equation}
\label{eq: risk-hat 4}
0 \leq \risk^k_2 < \epsilon_\epst'.
\end{equation}

As for $\risk^k_3$, taking into account that $(1-p_k)^{1/p_k} < 1/e$, from \eqref{eq: f} and \eqref{eq: phi} it is seen that
$
f_k(\nden) < p_k \phi(\ndenpk) / (1-p_k)^\nnum.
$
In addition, \eqref{eq: p k numenos nden epsg} implies that $\nden \geq \nden_\epsg$ for any $\nden$ within the range in \eqref{eq: risk+ 0}. Thus
\begin{equation}
\label{eq: risk+ 1}
0 < \risk^k_3 < \frac {p_k}{(1-p_k)^\nnum} \sum_{\lceil \numas/p_k\rceil+1}^\infty \phi(\ndenpk) L\left(\frac{\Omega+\theta_{\epsg,\nden}}{\ndenpk}\right).
\end{equation}
Since $\epsilon_\epsg \leq \Omega/2$,
\begin{equation}
\frac{\Omega}{2\ndenpk} < \frac{\Omega+\theta_{\epsg,\nden}}{\ndenpk} < \frac{3\Omega}{2\ndenpk} < \frac{3\Omega}{2\numas}.
\end{equation}
Thus, assuming $3\Omega/(2\numas) < \xcero$, and taking into account that $\Kcero < 0$, it stems that for $\nden$ within the indicated range
\begin{equation}
\label{eq: risk+ L}
L\left( \frac{\Omega+\theta_{\epsg,\nden}}{\ndenpk} \right) < \Mcero \left(\frac{\Omega+\theta_{\epsg,\nden}}{\ndenpk}\right)^\Kcero <
\Mcero \left(\frac{\Omega}{2\ndenpk}\right)^\Kcero.
\end{equation}
If it is additionally assumed that $p_k \leq 1/2$, the factor $1/(1-p_k)^\nnum$ in \eqref{eq: risk+ 1} cannot exceed $2^\nnum$. Therefore
\begin{equation}
\label{eq: risk+ 2}
0 < \risk^k_3 < \frac{2^{\nnum-\Kcero} \Mcero \Omega^\Kcero}{(\nnum-1)!} \sum_{\lceil \numas/p_k\rceil+1}^\infty p_k \ndenpk^{\nnum-\Kcero-1} \exp(-\ndenpk).
\end{equation}
The sum in \eqref{eq: risk+ 2} tends to $\Gamma(\nnum-\Kcero,\numas)$ as $k\ \rightarrow \infty$. Thus, given $\epsilon_\epsi'>0$, there exists $k_\epsi'$ such that for $k \geq k_\epsi'$
\begin{equation}
0 < \sum_{\lceil \numas/p_k\rceil+1}^\infty p_k \ndenpk^{\nnum-\Kcero-1} \exp(-\ndenpk) < \Gamma(\nnum-\Kcero,\numas) + \epsilon_\epsi'.
\end{equation}
In addition, since $\Gamma(\nnum-\Kcero,\numas)$ is positive and tends to $0$ as $\numas \rightarrow \infty$, for any $\epsilon_\epst'' >0$ there exists $\numas_\epst''$ such that $0<\Gamma(\nnum-\Kcero,\numas)<\epsilon_\epst''$ for $\numas \geq \numas_\epst''$. Therefore \eqref{eq: risk+ 2} can be written as
\begin{equation}
\label{eq: risk+ 4}
0 < \risk^k_3 < \frac{2^{\nnum-\Kcero} \Mcero \Omega^\Kcero}{(\nnum-1)!} (\epsilon_\epst''+\epsilon_\epsi').
\end{equation}

To establish that $\risk^k \rightarrow \riskas$, it suffices to show that for any $\epsilon_0>0$, there exists $k_0$ such that $|\risk^k - \riskas| < \epsilon_0$ for all $k \geq k_0$. With the foregoing results, and taking into account the dependencies between the involved parameters, this is accomplished as follows. Given $\epsilon_0>0$, let
\begin{equation}
\label{eq: epst, eleg}
\epsilon_\epst = \frac{\epsilon_0}{\epscuant}.
\end{equation}
This determines the values $\numenos_\epst$ and $\numas_\epst$. Likewise, taking
\begin{equation}
\label{eq: epst', eleg}
\epsilon_\epst' = \frac{\epsilon_0}{\epscuant}
\end{equation}
determines $\numenos_\epst'$, and taking $\epsilon_\epst''$ such that
\begin{equation}
\label{eq: epst'' eleg}
\frac{2^{\nnum-\Kcero} \Mcero \Omega^\Kcero}{(\nnum-1)!} \epsilon_\epst'' = \frac{\epsilon_0}{\epscuant}
\end{equation}
determines $\numas_\epst''$. The values $\numenos$ and $\numas$ are selected as
\begin{align}
\label{eq: numenos}
\numenos & = \min\left\{ \numenos_\epst, \numenos_\epst', \frac{\Omega}{\xinf} \right\}, \\
\label{eq: numas}
\numas & = \max\left\{ \numas_\epst, \numas_\epst'', \frac{3\Omega}{2\xcero} \right\}.
\end{align}
(Note that, since $\numas_\epst > \numenos_\epst$, \eqref{eq: numenos} and \eqref{eq: numas} imply that $\numas > \numenos$.) From $\numenos$ and $\numas$, the intervals $I$ and $I'$ are obtained, and the values $m_\phi$, $d$ and $J$ can be computed. Taking
\begin{equation}
\label{eq: epsi, eleg}
\epsilon_\epsi = \frac{\epsilon_0}{\epscuant}
\end{equation}
determines $k_\epsi$. The parameter $\epsilon_\epsf$ is selected such that
\begin{equation}
\label{eq: epsf, eleg}
\left( \riskas + \frac{4\epsilon_0}{\epscuant} \right) \frac{\epsilon_\epsf}{m_\phi} =  \frac{\epsilon_0}{\epscuant},
\end{equation}
which determines $k_\epsf$. Next, $\epsilon_\epsc$ is chosen such that
\begin{equation}
\label{eq: epsc, eleg}
(\numas - \numenos + 3)\epsilon_\epsc = \frac{\epsilon_0}{\epscuant},
\end{equation}
from which $\delta_\epsc$ is obtained. Taking $\epsilon_\epsd$ as
\begin{equation}
\label{eq: epsd, eleg}
\epsilon_\epsd = \frac{\epsilon_\epsc}{J}
\end{equation}
determines $\delta_\epsd$ and $k_\epsd$. Choosing any $\epsilon_\epsg$ smaller than $\min\{\Omega/2,\delta_\epsc,\delta_\epsd\}$ determines $\nden_\epsg$, from which $m_\mathrm g$ and $M'_\mathrm g$ can be obtained. Let $k_\epsg$ be such that for all $k \geq k_\epsg$
\begin{equation}
\label{eq: epsg, eleg}
\frac{\nden_\epsg^{\nnum} \max\{\Minf M_\mathrm g^\Kinf, M'_\mathrm g\}}{(\nnum-1)!} p_k^{\nnum-\Kinf} < \frac{\epsilon_0}{\epscuant}.
\end{equation}
Let $k_\epsg'$ be chosen such that \eqref{eq: p k numenos nden epsg} holds for all $k \geq k_\epsg'$, and $k_\epsa$ such that \eqref{eq: p k numenos nnum 1} holds for all $k \geq k_\epsa$. The parameter $\epsilon_\epsi'$ is chosen as
\begin{equation}
\label{eq: epsi', eleg}
\epsilon_\epsi' = \epsilon_\epst'',
\end{equation}
which determines $k_\epsi'$. Finally, let $k_\epsh$ be such that $p_k \leq 1/2$ for all $k \geq k_\epsh$. Taking $k_0 = \max\{k_\epsi, k_\epsi', k_\epsf, k_\epsg, k_\epsg', k_\epsd, k_\epsa, k_\epsh\}$, the following inequalities are obtained for $k \geq k_0$. From \eqref{eq: risk0 6}, \eqref{eq: epst, eleg} and \eqref{eq: epsi, eleg}--\eqref{eq: epsd, eleg},
% *!* \eqref{eq: risk0 6}, \eqref{eq: epst, eleg}, \eqref{eq: epsi, eleg}, \eqref{eq: epsf, eleg}, \eqref{eq: epsc, eleg} and \eqref{eq: epsd, eleg},
\begin{equation}
\label{eq: risk 0 res}
|\risk^k_0 - \riskas| < \frac{4\epsilon_0}{\epscuant} + \left(\riskas + \frac{4\epsilon_0}{\epscuant}\right) \frac{\epsilon_\epsf}{m_\phi}
= \frac{5\epsilon_0}{\epscuant};
\end{equation}
from \eqref{eq: risk-check 4} and \eqref{eq: epsg, eleg},
\begin{equation}
\label{eq: risk 1 res}
0 \leq \risk^k_1 < \frac{\epsilon_0}{\epscuant};
\end{equation}
from \eqref{eq: risk-hat 4} and \eqref{eq: epst', eleg},
\begin{equation}
\label{eq: risk 2 res}
0 \leq \risk^k_2 < \frac{\epsilon_0}{\epscuant};
\end{equation}
and from \eqref{eq: risk+ 4}, \eqref{eq: epst'' eleg} and \eqref{eq: epsi', eleg},
\begin{equation}
\label{eq: risk 3 res}
0 < \risk^k_3 < \frac{2\epsilon_0}{\epscuant}.
\end{equation}
Inequalities \eqref{eq: risk 0 res}--\eqref{eq: risk 3 res}
% *!* \eqref{eq: risk 0 res}, \eqref{eq: risk 1 res}, \eqref{eq: risk 2 res} and \eqref{eq: risk 3 res}
imply that $|\risk^k - \riskas| < \epsilon_0$ for all $k \geq k_0$, which concludes the proof.
%\qed
\end{proof}

\begin{proof}[Proof of Proposition~\ref{prop: riskas C1 Omega}]
%\paragraph{Proof of Proposition~\ref{prop: riskas C1 Omega}}
By
% property~\ref{assum item: disc fin}
Assumption~\ref{assum: L disc fin}, let $D$ be the number of discontinuities of $L$, occurring at points $x_1 < x_2 < \cdots < x_D$. The asymptotic risk $\riskas$ can be expressed as $\sum_{i=0}^D \riskas_i$ with
\begin{align}
\label{eq: riskas trozo 0}
\riskas_0 & = \int_0^{x_1} \psi(x,\Omega) L(x) \,\diff x, \\
\label{eq: riskas trozo i}
\riskas_i & = \int_{x_i}^{x_{i+1}} \psi(x,\Omega) L(x) \,\diff x, \quad i=1,\ldots,D-1, \\
\label{eq: riskas trozo I}
\riskas_D & = \int_{x_D}^\infty \psi(x,\Omega) L(x) \,\diff x.
\end{align}
Given $i=1,\ldots,D-1$, let $L_i(x)$ be defined for $x \in [x_i,x_{i+1}]$ as
\begin{equation}
\label{eq: L i}
L_i(x) = \begin{cases}
L(x), & x_i < x < x_{i+1}, \\
L(x_i+), & x = x_i, \\
L(x_{i+1}-), & x = x_{i+1},
\end{cases}
\end{equation}
and let $\psiL_i(x,\Omega)$ be defined for $x \in [x_i,x_{i+1}]$, $\Omega \in \mathbb R^+$ as $\psiL_i(x,\Omega) = \psi(x,\Omega) L_i(x)$. Clearly, the integral in \eqref{eq: riskas trozo i} does not change if $\psi(x,\Omega) L(x)$ is replaced by $\psiL_i(x,\Omega)$. The function $\psiL_i$ is continuous on $[x_i,x_{i+1}] \times \mathbb R^+$, because it is the product of continuous functions. The function $\partial \psiL_i/\partial \Omega$ is similarly seen to be continuous. This implies \citep[corollary to theorem 5.9]{Fleming77} that $\riskas_i$ given by \eqref{eq: riskas trozo i} is a $C^1$ function of $\Omega$, with
\begin{equation}
\label{eq: der riskas trozo}
\frac{\diff \riskas_i}{\diff \Omega} =
\int_{x_i}^{x_{i+1}} \frac{\partial \psiL_i(x,\Omega)}{\partial \Omega} \,\diff x
= \int_{x_i}^{x_{i+1}} \frac{\partial \psi(x,\Omega)}{\partial \Omega} L(x) \,\diff x.
\end{equation}

Regarding $\riskas_0$, let $\psiL_0(x,\Omega) = \psi(x,\Omega) L(x)$ for $x \in (0,x_{i+1}]$, $\Omega \in \mathbb R^+$, and $\psiL_0(0,\Omega) = 0$. It is clear that $\psiL_0$ is continuous on $(0,x_1] \times \mathbb R^+$. In addition, its continuity at any point of the form $(0,\Omega_0)$ can be established as follows. Let $\Delta$ be any value such that $0<\Delta<\Omega_0$. For $\Omega \in (\Omega_0-\Delta,\Omega_0+\Delta)$ and $x>0$, $\psiL_0$ is bounded as
\begin{equation}
\label{eq: cota psi 0}
0 \leq \psiL_0(x,\Omega) < \frac{(\Omega_0+\Delta)^\nnum \exp(-(\Omega_0-\Delta)/x) L(x)}{x^{\nnum+1} (\nnum-1)!}.
\end{equation}
Property~\ref{assum item: cota O, cero} in Assumption~\ref{assum: L cota O} implies that the right-hand side of \eqref{eq: cota psi 0} tends to $0$ as $x \rightarrow 0$. Thus there exists $\delta>0$ such that $0 \leq \psiL_0(x,\Omega)<\epsilon$ for $0\leq x<\delta$, $|\Omega-\Omega_0|<\Delta$. This shows that $\psiL_0$ is continuous at $(0,\Omega_0)$, and thus on $[0,x_1] \times \mathbb R^+$. Using analogous arguments, $\partial \psiL_0/\partial \Omega$ can also be seen to be continuous on $[0,x_1] \times \mathbb R^+$. This implies that $\riskas_0$ is a $C^1$ function of $\Omega$, and \eqref{eq: der riskas trozo} holds for $i=0$ if the lower integration limit is replaced by $0$.

As for $\riskas_D$, let $\psiL(x,\Omega) = \psi(x,\Omega) L(x)$, and consider the function $\psiL(x,\Omega)/\Omega^\nnum$. This function and its partial derivative with respect to $\Omega$ are continuous on $(x_D,\infty) \times \mathbb R^+$, and satisfy the following bounds:
\begin{align}
\label{eq: cota psi}
0 < \frac{\psiL(x,\Omega)}{\Omega^\nnum} & < \frac{L(x)}{x^{\nnum+1} (\nnum-1)!}, \\
\label{eq: cota der psi}
0 > \frac{\partial (\psiL(x,\Omega)/\Omega^\nnum)}{\partial \Omega} & = -\frac{\exp(-\Omega/x)L(x)}{x^{\nnum+2}(\nnum-1)!} > -\frac{L(x)}{x^{\nnum+2}(\nnum-1)!}.
\end{align}
The right-most parts of \eqref{eq: cota psi} and \eqref{eq: cota der psi} are integrable on $(x_D,\infty)$, because of property~\ref{assum item: cota O, inf} in Assumption~\ref{assum: L cota O}. This implies \citep[theorem 5.9]{Fleming77} that $\riskas_D/\Omega^\nnum$ is a $C^1$ function of $\Omega$, and therefore so is $\riskas_D$; in addition,
$\diff \riskas_D/\diff \Omega$ satisfies an expression analogous to \eqref{eq: der riskas trozo} with the integration interval replaced by $(x_D,\infty)$.
% Para el primer intervalo necesito la exponencial (por el punto x=0), no me sirven estas desigualdades. Y para este último intervalo no puedo usar lo que he usado en el primero, porque el último intervalo no es compacto. Para los intervalos del medio valdría cualquiera de los dos métodos.

The preceding results assure that $\diff \riskas / \diff \Omega = \sum_{i=0}^D \diff \riskas_i / \diff \Omega$ is continuous and can be expressed as in \eqref{eq: der riskas}. The equality \eqref{eq: der riskas Omega} readily follows from \eqref{eq: psi}, \eqref{eq: riskas x} and \eqref{eq: der riskas}.
%\qed
\end{proof}

% Lemmas~\ref{lemma: min resp Omega izq} and \ref{lemma: min resp Omega der} given in the following are required for the proof of Theorem~\ref{teo: min resp Omega}, and Lemmas~\ref{lemma: der int}--\ref{lemma: der binom partic} are used to establish Lemmas~\ref{lemma: min resp Omega izq} and \ref{lemma: min resp Omega der}.
% *·*

\begin{lemma}
\label{lemma: der int}
For any $a, c \in \mathbb R^+$, $b \in \mathbb R$,
\begin{equation}
\label{eq: lemma der int}
\frac{\diff}{\diff\Omega} \int_a^\infty \frac{\Omega^b \exp(-\Omega/x)}{x^{c+1}} \,\diff x
= \frac {\Omega^{b-1} \exp(-\Omega/a)} {a^c} + (b-c) \Omega^{b-c-1} \gamma \left( c,\frac \Omega a \right) .
\end{equation}
\end{lemma}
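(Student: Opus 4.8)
The plan is to differentiate under the integral sign and then reduce both resulting integrals to lower incomplete gamma functions via the substitution $\tau = \Omega/x$. First I would justify the interchange of $\diff/\diff\Omega$ and $\int_a^\infty$: fix $\Omega_0 \in \mathbb R^+$ and $\Delta$ with $0 < \Delta < \Omega_0$; for $\Omega \in (\Omega_0 - \Delta, \Omega_0 + \Delta)$ and $x \geq a$, both the integrand and its partial derivative with respect to $\Omega$,
\[
\frac{\partial}{\partial\Omega}\,\frac{\Omega^b \exp(-\Omega/x)}{x^{c+1}} = \frac{b\,\Omega^{b-1}\exp(-\Omega/x)}{x^{c+1}} - \frac{\Omega^b\exp(-\Omega/x)}{x^{c+2}},
\]
are bounded in absolute value by a constant (depending only on $\Omega_0$, $\Delta$, $b$) times $x^{-(c+1)}$, using $\exp(-\Omega/x) \leq 1$ and the boundedness of $\Omega^{b-1}$ and $\Omega^b$ on that interval. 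Since $c > 0$, the function $x^{-(c+1)}$ is integrable on $[a,\infty)$, and there is no singularity at the lower limit because $a > 0$. Hence, by the standard theorem on differentiation of an integral depending on a parameter \citep[theorem 5.9]{Fleming77}, the left-hand side of \eqref{eq: lemma der int} equals $b\,\Omega^{b-1}\int_a^\infty x^{-(c+1)}\exp(-\Omega/x)\,\diff x - \Omega^b\int_a^\infty x^{-(c+2)}\exp(-\Omega/x)\,\diff x$.

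Next I would apply the change of variable $\tau = \Omega/x$ to each of these two integrals, which gives $\int_a^\infty x^{-(c+1)}\exp(-\Omega/x)\,\diff x = \Omega^{-c}\gamma(c,\Omega/a)$ and $\int_a^\infty x^{-(c+2)}\exp(-\Omega/x)\,\diff x = \Omega^{-c-1}\gamma(c+1,\Omega/a)$, by the definition \eqref{eq: gamma} of the lower incomplete gamma function. Substituting, the derivative becomes $\Omega^{b-c-1}\bigl(b\,\gamma(c,\Omega/a) - \gamma(c+1,\Omega/a)\bigr)$. Finally I would invoke the recurrence $\gamma(c+1,u) = c\,\gamma(c,u) - u^c\exp(-u)$, obtained by a single integration by parts in \eqref{eq: gamma}, with $u = \Omega/a$; this turns the parenthesis into $(b-c)\,\gamma(c,\Omega/a) + (\Omega/a)^c\exp(-\Omega/a)$, and since $\Omega^{b-c-1}(\Omega/a)^c = \Omega^{b-1}/a^c$, the expression collapses to exactly the right-hand side of \eqref{eq: lemma der int}.

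I do not expect any serious obstacle here; the only point calling for a little care is the justification of differentiation under the integral sign, but the domination is immediate because $c > 0$ makes $x^{-(c+1)}$ integrable near $x = \infty$ while $a > 0$ removes any difficulty at the lower endpoint. Everything else amounts to one change of variable and one integration by parts for the incomplete gamma function.
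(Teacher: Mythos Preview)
Your argument is correct, but it takes a slightly longer route than the paper's. The paper first performs the substitution $x=\Omega/\nu$ on the integral itself, obtaining the closed form
\[
\int_a^\infty \frac{\Omega^b \exp(-\Omega/x)}{x^{c+1}}\,\diff x
= \Omega^{b-c}\,\gamma\!\left(c,\frac{\Omega}{a}\right),
\]
and then differentiates this expression in $\Omega$ using only the product rule together with the fundamental theorem of calculus (via $\diff\gamma(c,u)/\diff u = u^{c-1}\exp(-u)$). That yields the right-hand side of \eqref{eq: lemma der int} immediately, with no need to justify an interchange of $\diff/\diff\Omega$ with an improper integral and no need for the recurrence $\gamma(c+1,u)=c\,\gamma(c,u)-u^{c}\exp(-u)$. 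Your approach---differentiate under the integral first, then substitute, then invoke the recurrence---reaches the same endpoint and your domination argument is fine; it simply costs an extra step that the paper avoids by computing the integral in closed form before differentiating.
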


\begin{proof}
Applying the change of variable $x=\Omega/\nu$, the integral in \eqref{eq: lemma der int} can be expressed as
\begin{equation}
\int_a^\infty \frac{\Omega^b \exp(-\Omega/x)}{x^{c+1}} \,\diff x
= \int_0^{\Omega/a} \Omega^{b-c} \nu^{c-1} \exp(-\nu) \,\diff \nu
= \Omega^{b-c} \left( c,\frac \Omega a \right) ,
\end{equation}
from which \eqref{eq: lemma der int} follows.
%\qed
\end{proof}

\begin{lemma}
For $s \in \mathbb R$,
\label{lemma: lim gamma}
\begin{align}
\label{eq: lim gamma 0}
\lim_{u \rightarrow 0} \frac{\gamma(s,u)}{u^s} & = \frac 1 s, \\
\label{eq: lim gamma inf}
\lim_{u \rightarrow \infty} \frac{\Gamma(s,u)}{u^{s-1} \exp(-u)} & = 1.
\end{align}
\end{lemma}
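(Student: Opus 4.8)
The plan is to reduce each of the two limits to an elementary integral; both are standard sharp asymptotics of the incomplete gamma functions \eqref{eq: Gamma}--\eqref{eq: gamma}. For \eqref{eq: lim gamma 0} I would use that, given $\epsilon>0$, continuity of $\exp$ provides $\delta>0$ with $\exp(-\tau)>1-\epsilon$ for $\tau\in(0,\delta)$; hence for $0<u<\delta$ the bound $(1-\epsilon)\tau^{s-1}\leq \tau^{s-1}\exp(-\tau)\leq \tau^{s-1}$ on $(0,u)$, integrated using $\int_0^u\tau^{s-1}\,\diff\tau=u^s/s$, gives $(1-\epsilon)/s\leq \gamma(s,u)/u^s\leq 1/s$; letting $u\rightarrow 0$ and then $\epsilon\rightarrow 0$ yields the claim. (This, like the statement itself, tacitly uses $s>0$, without which $\gamma(s,u)$ is not finite.)

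For \eqref{eq: lim gamma inf} I would substitute $\tau=u+w$ in \eqref{eq: Gamma}, which factors out precisely the expected order: $\Gamma(s,u)=u^{s-1}\exp(-u)\int_0^\infty(1+w/u)^{s-1}\exp(-w)\,\diff w$, so the quotient in \eqref{eq: lim gamma inf} equals $R(u):=\int_0^\infty(1+w/u)^{s-1}\exp(-w)\,\diff w$. Since $(1+w/u)^{s-1}\rightarrow 1$ pointwise as $u\rightarrow\infty$, and for $u\geq 1$ the integrand is dominated by the integrable function $(1+w)^{\max\{s-1,0\}}\exp(-w)$, dominated convergence gives $R(u)\rightarrow\int_0^\infty\exp(-w)\,\diff w=1$. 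An alternative that avoids any convergence theorem is L'H\^opital's rule: $\Gamma(s,u)$ and $u^{s-1}\exp(-u)$ both tend to $0$ as $u\rightarrow\infty$, and the quotient of derivatives, $-u^{s-1}\exp(-u)\,/\,\bigl(u^{s-1}\exp(-u)\,((s-1)/u-1)\bigr)=\bigl(1-(s-1)/u\bigr)^{-1}$, tends to $1$; the same rule applied as $u\rightarrow 0^+$ reproves \eqref{eq: lim gamma 0} for any $s>0$.

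I do not anticipate a genuine obstacle here. The only step needing slight care is the choice of majorant in the dominated-convergence argument, since $(1+w/u)^{s-1}$ increases in $u$ when $s\geq 1$ but decreases when $s<1$, so a single bound must cover both signs of $s-1$ --- whence the exponent $\max\{s-1,0\}$. If one wanted to keep \eqref{eq: lim gamma inf} fully elementary as well, integration by parts gives $\Gamma(s,u)=u^{s-1}\exp(-u)+(s-1)\Gamma(s-1,u)$, and combining this with the crude estimate $\Gamma(s-1,u)\leq \Gamma(s,u)/u$ valid for $u\geq 1$ traps $R(u)$ between two expressions that both converge to $1$.
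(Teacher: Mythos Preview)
Your argument is correct. The paper's own proof is far less detailed: it simply cites two identities from Abramowitz and Stegun's handbook (equations 6.5.29 and 6.5.32, the small-$u$ series for $\gamma(s,u)$ and the large-$u$ asymptotic expansion for $\Gamma(s,u)$) and reads off the limits. So you are not taking the same route; you are supplying a self-contained derivation where the paper defers to a reference.

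What each approach buys: the paper's citation is shorter and leans on well-known tabulated expansions, which is appropriate for an auxiliary lemma. Your approach is more transparent and makes clear exactly what is needed --- in particular, your remark that \eqref{eq: lim gamma 0} tacitly requires $s>0$ for $\gamma(s,u)$ to be finite is a useful observation that the paper's bare citation glosses over. The substitution $\tau=u+w$ together with the dominant $(1+w)^{\max\{s-1,0\}}\exp(-w)$ is the cleanest route to \eqref{eq: lim gamma inf}; the L'H\^opital alternative is equally valid and perhaps even shorter. The integration-by-parts sketch at the end is also sound (your estimate $\Gamma(s-1,u)\leq\Gamma(s,u)/u$ in fact holds for all $u>0$, not just $u\geq 1$), though it is the least direct of the three.
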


\begin{proof}
These equalities respectively follow from \citet[equation 6.5.29]{Abramowitz70} and \citet[equation 6.5.32]{Abramowitz70}.
%\qed
\end{proof}

\begin{lemma}
\label{lemma: Gamma}
The upper incomplete gamma function
%defined in
\eqref{eq: Gamma} satisfies for $s, w \in \mathbb N$, $\nu \in \mathbb R^+$
\begin{equation}
\Gamma(s,\nu) =
\begin{cases}
\displaystyle
\ \,\ \sum_{k=0}^{s-1} \ff{(s-1)}{s-k-1} \nu^k \exp(-\nu), & s \geq 1, \\
\displaystyle
\sum_{k=s-w}^{s-1} \ff{(s-1)}{s-k-1} \nu^k \exp(-\nu) + W(\nu), & s \leq 0,
\end{cases}
\end{equation}
where $W(\nu)$ is $O(\nu^{s-w-1} \exp(-\nu))$ as $\nu \rightarrow \infty$.
\end{lemma}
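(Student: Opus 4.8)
The plan is to treat the two cases separately, both times exploiting the standard recursion $\Gamma(s+1,\nu) = s\,\Gamma(s,\nu) + \nu^s\exp(-\nu)$, which follows from integration by parts in \eqref{eq: Gamma}. For the case $s \geq 1$ I would argue by induction on $s$. The base case $s=1$ is $\Gamma(1,\nu) = \exp(-\nu)$, which agrees with the claimed sum (a single term $k=0$, with coefficient $\ff{0}{0}=1$). For the inductive step, assuming the formula for $s$, apply the recursion to get $\Gamma(s+1,\nu) = s\sum_{k=0}^{s-1}\ff{(s-1)}{s-k-1}\nu^k\exp(-\nu) + \nu^s\exp(-\nu)$; the only work is to check that multiplying each coefficient $\ff{(s-1)}{s-k-1}$ by $s$ gives $\ff{s}{s-k}$, and that the extra $\nu^s\exp(-\nu)$ term supplies the missing $k=s$ summand with coefficient $\ff{s}{0}=1$. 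This is a routine manipulation of the falling-factorial notation $\ff{k}{i}=k(k-1)\cdots(k-i+1)$ defined at the start of Section~\ref{parte: prel}.

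For the case $s \leq 0$ I would instead run the recursion \emph{downward}. Writing the recursion as $\Gamma(s,\nu) = \bigl(\Gamma(s+1,\nu) - \nu^s\exp(-\nu)\bigr)/s$ is awkward when $s$ can be $0$ or negative, so it is cleaner to iterate it $w$ times in the increasing direction starting from $\Gamma(s-w,\nu)$: one obtains $\Gamma(s,\nu)$ expressed as a linear combination of $\nu^{s-w}\exp(-\nu), \nu^{s-w+1}\exp(-\nu), \ldots, \nu^{s-1}\exp(-\nu)$ plus a multiple of $\Gamma(s-w,\nu)$. A bookkeeping computation of the accumulated coefficients — again just products of consecutive integers — should show they are exactly $\ff{(s-1)}{s-k-1}$ for $k=s-w,\ldots,s-1$, matching the stated sum. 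The leftover term is $c\,\Gamma(s-w,\nu)$ for some constant $c$, and this is where Lemma~\ref{lemma: lim gamma}, specifically \eqref{eq: lim gamma inf}, comes in: it gives $\Gamma(s-w,\nu) = \Theta\bigl(\nu^{s-w-1}\exp(-\nu)\bigr)$ as $\nu\to\infty$, so setting $W(\nu) = c\,\Gamma(s-w,\nu)$ yields a function that is $O(\nu^{s-w-1}\exp(-\nu))$, as required.

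The main obstacle is purely notational rather than conceptual: verifying that the coefficients produced by iterating the recursion collapse to the clean falling-factorial expression $\ff{(s-1)}{s-k-1}$ in both cases, and in the downward case making sure the indexing of $W$ and the bound on its growth line up correctly (the $-w-1$ rather than $-w$ in the exponent comes precisely from the $\exp(-u)$-weighted power in \eqref{eq: lim gamma inf}). I would double-check the edge cases $s=1$ (case one) and $s=0$ (case two, where $\Gamma(0,\nu)$ is finite for $\nu>0$ and the sum runs $k=-w,\ldots,-1$) to be sure the stated ranges of summation are correct, but I anticipate no real difficulty there.
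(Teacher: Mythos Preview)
Your proposal is correct and follows essentially the same route as the paper. For $s\geq 1$ the paper simply cites the standard identity (Abramowitz--Stegun, equation~6.5.13), which your induction reproduces; for $s\leq 0$ the paper applies the same recursion $\Gamma(s,\nu)=(s-1)\Gamma(s-1,\nu)+\nu^{s-1}\exp(-\nu)$ exactly $w$ times and then invokes \eqref{eq: lim gamma inf} from Lemma~\ref{lemma: lim gamma} to control the remainder, precisely as you outline.
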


\begin{proof}
The expression for $s \geq 1$ is equivalent to \citet[equation 6.5.13]{Abramowitz70}.

For $s \leq 0$, the stated result follows from recursively using the identity \citep[equation 6.5.21]{Abramowitz70}
\begin{equation}
\Gamma(s,\nu) = (s-1) \Gamma(s-1,\nu) + \nu^{s-1} \exp(-\nu)
\end{equation}
$w$ times and taking into account the equality \eqref{eq: lim gamma inf} from Lemma~\ref{lemma: lim gamma}.
%\qed
\end{proof}

\begin{lemma}
\label{lemma: der binom partic}
For $t \in \mathbb N$, $u \in \mathbb Z$,
\begin{equation}
\label{eq: lemma der binom partic}
\sum_{j=0}^t \binom{t}{j} j \ff{(u-j)}{i-1} (-1)^{t-j} =
\begin{cases}
\displaystyle
0, & i =1,\ldots, t-1, \\
\displaystyle
(-1)^{t-1} t!, & i = t.
\end{cases}
\end{equation}
\end{lemma}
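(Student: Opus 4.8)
The plan is to recognize the left-hand side as a finite-difference operator applied to a polynomial and to exploit the fact that the $t$-th finite difference annihilates polynomials of degree $< t$ and picks out $t!$ times the leading coefficient of a degree-$t$ polynomial. Write $\Delta^t$ for the $t$-th forward-difference-type operator, so that for any sequence $(c_j)_{j\geq 0}$ one has $\sum_{j=0}^t \binom{t}{j} (-1)^{t-j} c_j = (\Delta^t c)(0)$, and recall the standard fact that if $c_j = P(j)$ for a polynomial $P$ then $\sum_{j=0}^t \binom{t}{j}(-1)^{t-j} P(j)$ equals $0$ when $\deg P < t$ and equals $t!\,\cdot\,(\text{leading coefficient of }P)$ when $\deg P = t$.

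First I would fix $i$ and $u$ and treat $j$ as the running variable, setting
\begin{equation}
\label{eq: P def plan}
P(j) = j \ff{(u-j)}{i-1} = j \,(u-j)(u-j-1)\cdots(u-j-i+2).
\end{equation}
Since $\ff{(u-j)}{i-1}$ is a product of $i-1$ linear factors in $j$, the polynomial $P(j)$ has degree exactly $i$ in $j$; and its leading term comes from multiplying the $j$ in front by the $(-j)^{i-1}$ coming from the $i-1$ factors, giving leading coefficient $(-1)^{i-1}$. Then the left-hand side of \eqref{eq: lemma der binom partic} is exactly $\sum_{j=0}^t \binom{t}{j}(-1)^{t-j} P(j)$.

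Next I would apply the finite-difference fact. For $i = 1,\ldots,t-1$ we have $\deg P = i < t$, so the sum vanishes, giving the first case. For $i = t$ we have $\deg P = t$ with leading coefficient $(-1)^{t-1}$, so the sum equals $t!\,(-1)^{t-1} = (-1)^{t-1} t!$, giving the second case. The one point that needs a small remark is the degenerate case $i=1$: there $\ff{(u-j)}{0} = 1$, so $P(j) = j$, which indeed has degree $1 < t$ provided $t \geq 2$ (and if $t = 1$ the statement about $i = 1,\ldots,t-1$ is vacuous), so nothing goes wrong.

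I expect the only real obstacle to be making the degree-and-leading-coefficient bookkeeping in \eqref{eq: P def plan} airtight, together with citing or quickly re-deriving the finite-difference annihilation identity; both are routine. One clean way to supply the identity without an external reference is to note $\sum_{j=0}^t \binom{t}{j}(-1)^{t-j} j^m = 0$ for $0 \le m < t$ and $= t!$ for $m = t$ (Stirling-number / inclusion–exclusion identity), and then extend by linearity from monomials $j^m$ to the polynomial $P(j)$, reading off the coefficient of $j^t$.
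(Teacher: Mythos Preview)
Your proposal is correct and follows essentially the same approach as the paper: both recognize $P(j)=j\,\ff{(u-j)}{i-1}$ as a polynomial in $j$ of degree $i$ with leading coefficient $(-1)^{i-1}$, and then use that $\sum_{j=0}^t\binom{t}{j}(-1)^{t-j}P(j)$ vanishes for $\deg P<t$ and equals $t!$ times the leading coefficient for $\deg P=t$. The only cosmetic difference is that the paper expands $P(j)$ in the falling-factorial basis $\ff{j}{k}$ (deriving the key identity by differentiating $(x-1)^t$ and setting $x=1$) whereas you work directly with the monomial basis / finite-difference interpretation.
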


\begin{proof}
The equality
\begin{equation}
\label{eq: lemma der binom partic demo 1}
\sum_{j=0}^t \binom{t}{j} \ff{j}{k} (-1)^{t-j} =
\begin{cases}
\displaystyle
0, & k \neq t, \\
\displaystyle
t!, & k = t.
\end{cases}
\end{equation}
is easily shown to hold for $k \in \mathbb N$ by applying the binomial theorem to $(x-1)^t$, differentiating $k$ times and particularizing for $x=1$. The term $j \ff{(u-j)}{i-1}$ in \eqref{eq: lemma der binom partic} can be expressed as $\sum_{k=1}^i a_k \ff{j}{k}$ for appropriate values of the coefficients $a_k$; furthermore, it is easily seen that $a_i$ equals $(-1)^{i-1}$. Thus
\begin{equation}
\label{eq: lemma der binom partic demo 3}
\sum_{j=0}^t \binom{t}{j} j \ff{(u-j)}{i-1} (-1)^{t-j} =
\sum_{k=1}^i \sum_{j=0}^t a_k \binom{t}{j} \ff{j}{k} (-1)^{t-j}.
\end{equation}
If $i \leq t-1$, the inner sum in \eqref{eq: lemma der binom partic demo 3} equals $0$ for all $k$ within the range specified in the outer sum, because of \eqref{eq: lemma der binom partic demo 1}. If $i=t$, all values of the index $k$ give a null inner sum except $k=t$, which gives $a_t t!$ = $(-1)^{t-1} t!$. This establishes \eqref{eq: lemma der binom partic}.
%\qed
\end{proof}

\begin{proof}[Proof of Proposition~\ref{prop: assum item bien izq, lim}]
%\paragraph{Proof of Proposition~\ref{prop: assum item bien izq, lim}}
Assume that \eqref{eq: cond 0, lim} holds. Let $\epsilon = -Bs/(4\nnum)$, which is positive for the allowed values of $B$ and $s$. From \eqref{eq: cond 0, lim}, there exists $\delta$ such that $|L(x)-A-Bx^s| < \epsilon x^s$ for all $x \in (0,\delta)$. This implies that for any $\xcerocr \in (0,\delta)$, and for $\xcerocr \leq x < \delta$,
\begin{equation}
L(\xcerocr)-L(x) > B(\xcerocr^s-x^s) - 2\epsilon x^s = B\xcerocr^s - (B+2\epsilon)x^s.
\end{equation}
Therefore
\begin{equation}
\label{eq: cond 0, lim demo 1}
\begin{split}
\quad & \int_{\xcerocr}^\infty \frac{L(\xcerocr)-L(x)}{x^{\nnum+1}} \,\diff x \\
& = \int_{\xcerocr}^\delta \frac{L(\xcerocr)-L(x)}{x^{\nnum+1}} \,\diff x + \int_\delta^\infty \frac{L(\xcerocr)-L(x)}{x^{\nnum+1}} \,\diff x \\
& > B \xcerocr^s \int_{\xcerocr}^\delta \frac{\diff x}{x^{\nnum+1}}
- (B+2\epsilon) \int_{\xcerocr}^\delta \frac{\diff x}{x^{\nnum-s+1}}
+ \int_\delta^\infty \frac{L(\xcerocr)-L(x)}{x^{\nnum+1}} \,\diff x \\
& = \frac{B}{\nnum \xcerocr^{\nnum-s}} - \frac{B \xcerocr^s}{\nnum \delta^\nnum} + \frac{B+2\epsilon}{(\nnum-s)\delta^{\nnum-s}} - \frac{B+2\epsilon}{(\nnum-s)\xcerocr^{\nnum-s}} + \int_\delta^\infty \frac{L(\xcerocr)-L(x)}{x^{\nnum+1}} \,\diff x \\
& > \frac{B}{\nnum \xcerocr^{\nnum-s}} - \frac{B}{\nnum \delta^{\nnum-s}} + \frac{B+2\epsilon}{(\nnum-s)\delta^{\nnum-s}} - \frac{B+2\epsilon}{(\nnum-s)\xcerocr^{\nnum-s}} + \int_\delta^\infty \frac{L(\xcerocr)-L(x)}{x^{\nnum+1}} \,\diff x
\end{split}
\end{equation}
Denoting by $C$ the sum of the terms in the right-hand side of \eqref{eq: cond 0, lim demo 1} which do not depend on $\xcerocr$, i.e.~the second, third and fifth, and substituting the value of $\epsilon$,
\begin{equation}
\label{eq: cond 0, lim demo 2}
\int_{\xcerocr}^\infty \frac{L(\xcerocr)-L(x)}{x^{\nnum+1}} \,\diff x >
- \frac{Bs}{2\nnum(\nnum-s)\xcerocr^{\nnum-s}} + C.
\end{equation}
Taking into account that $-Bs$ and $\nnum-s$ are positive, and that $C$ is independent of $\xcerocr$, from \eqref{eq: cond 0, lim demo 2} it is seen that there exists $\xcerocr \in (0,\delta)$ such that \eqref{eq: cond 0, integral} holds.
%\qed
\end{proof}

\begin{lemma}
\label{lemma: min resp Omega izq}
Under the hypotheses of Theorem~\ref{teo: min resp Omega}, there exists $\Omega_0$ such that $\diff \riskas/\diff \Omega < 0$ for all $\Omega \leq \Omega_0$.
\end{lemma}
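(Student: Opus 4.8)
The plan is to rewrite $\diff\riskas/\diff\Omega$ as an integral against the Lebesgue--Stieltjes measure $\diff L$ and to read off its sign for small $\Omega$ from property~\ref{assum item: bien izq} of Assumption~\ref{assum: L crec}. Starting from Proposition~\ref{prop: riskas C1 Omega}, I would first compute, using \eqref{eq: psi}, $\partial\psi(x,\Omega)/\partial\Omega=\Omega^{\nnum-1}\exp(-\Omega/x)(\nnum-\Omega/x)/(x^{\nnum+1}(\nnum-1)!)$. Because $\int_0^\infty\psi(x,\Omega)\,\diff x=\int_0^\infty\phi(\nu)\,\diff\nu=1$ for every $\Omega$ (change of variable $\nu=\Omega/x$), one has $\int_0^\infty(\partial\psi(x,\Omega)/\partial\Omega)\,\diff x=0$, so in $\diff\riskas/\diff\Omega=\int_0^\infty(\partial\psi/\partial\Omega)\,L(x)\,\diff x$ the constant $L(\xcerocr)$ may be subtracted from $L(x)$. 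Next I would note that $x\mapsto-\Omega^{\nnum-1}\exp(-\Omega/x)/(x^\nnum(\nnum-1)!)$ is an antiderivative of $\partial\psi(\cdot,\Omega)/\partial\Omega$ vanishing at $0^+$, and integrate by parts in the Stieltjes sense (the sign of the antiderivative cancelling against the sign in the integration-by-parts formula) to obtain
\begin{equation}
\label{eq: der riskas via dL}
\frac{\diff\riskas}{\diff\Omega}=\frac{\Omega^{\nnum-1}}{(\nnum-1)!}\int_0^\infty\frac{\exp(-\Omega/x)}{x^\nnum}\,\diff L(x).
\end{equation}
The boundary terms vanish because $\exp(-\Omega/x)/x^\nnum\to0$ faster than any power as $x\to0$ while $L(x)-L(\xcerocr)=O(x^{\Kcero})$, and because $\exp(-\Omega/x)/x^\nnum=\Theta(x^{-\nnum})$ as $x\to\infty$ while $L(x)=O(x^{\Kinf})$ with $\Kinf<\nnum$ (Assumption~\ref{assum: L cota O}).

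Given \eqref{eq: der riskas via dL}, since its prefactor is positive for $\Omega>0$, it remains to show $\int_0^\infty\exp(-\Omega/x)\,x^{-\nnum}\,\diff L(x)<0$ for $\Omega$ small, and I would split this integral at $\xcerocr$. On $(0,\xcerocr)$ the function $L$ is non-increasing (property~\ref{assum item: bien izq} of Assumption~\ref{assum: L crec}), hence $\diff L\le0$ and this part is $\le0$. On $[\xcerocr,\infty)$ the integrand $\exp(-\Omega/x)/x^\nnum$ is bounded by $\xcerocr^{-\nnum}$ and converges pointwise to $x^{-\nnum}$ as $\Omega\to0$, and $\int_{[\xcerocr,\infty)}x^{-\nnum}\,|\diff L|(x)<\infty$ — on $[\xcerocr,\xinfcr]$ by bounded variation (Assumption~\ref{assum: L var acot}), and on $(\xinfcr,\infty)$ because $L$ is non-decreasing there (property~\ref{assum item: bien der}), so that integration by parts bounds $\int_{(\xinfcr,\infty)}x^{-\nnum}\,\diff L(x)$ in terms of $L(x)/x^\nnum=O(x^{\Kinf-\nnum})\to0$ and the convergent integral $\int_{\xinfcr}^\infty L(x)x^{-\nnum-1}\,\diff x$ (Assumption~\ref{assum: L cota O}). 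Dominated convergence, followed by integration by parts on $[\xcerocr,\infty)$, then gives
\begin{equation}
\label{eq: lim der riskas}
\lim_{\Omega\to0^+}\int_{[\xcerocr,\infty)}\frac{\exp(-\Omega/x)}{x^\nnum}\,\diff L(x)=-\frac{L(\xcerocr)}{\xcerocr^\nnum}+\nnum\int_{\xcerocr}^\infty\frac{L(x)}{x^{\nnum+1}}\,\diff x=-\nnum\int_{\xcerocr}^\infty\frac{L(\xcerocr)-L(x)}{x^{\nnum+1}}\,\diff x,
\end{equation}
which is strictly negative by \eqref{eq: cond 0, integral}; call it $-c$. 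Then for some $\Omega_0>0$ the $[\xcerocr,\infty)$ part lies below $-c/2$ for all $\Omega\in(0,\Omega_0]$, and adding the non-positive $(0,\xcerocr)$ part yields $\diff\riskas/\diff\Omega<0$ throughout $(0,\Omega_0]$, as claimed.

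The hard part will be the technical care in the integrations by parts: the boundary terms at $0$ and at $\infty$ and the integrability needed for dominated convergence all have to be justified from Assumptions~\ref{assum: L var acot} and \ref{assum: L cota O}, bearing in mind that $L$ need be neither continuous nor bounded near $0$. One also has to account for a possible jump of $L$ at $\xcerocr$: the left-endpoint value entering the integration by parts that yields \eqref{eq: lim der riskas} is $L(\xcerocr-)$ rather than $L(\xcerocr)$, but since $L$ is non-increasing up to $\xcerocr$ one has $L(\xcerocr-)\ge L(\xcerocr)$, so the limit is at most $-\nnum\int_{\xcerocr}^\infty(L(\xcerocr)-L(x))x^{-\nnum-1}\,\diff x$, still negative by \eqref{eq: cond 0, integral}. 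Apart from this bookkeeping and the dominated-convergence estimate over $[\xinfcr,\infty)$, the argument is a direct computation.
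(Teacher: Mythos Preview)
Your argument is correct and takes a genuinely different route from the paper's. The paper decomposes $\riskas=\riskascero_0+\riskascero_1+\riskascero_2$ by splitting the $\nu$-integral at $\Omega/\xcerocr$ and peeling off the constant $L(\xcerocr)$; it then shows $\diff\riskascero_1/\diff\Omega\le0$ by a direct monotonicity argument on the integrand, computes $\diff\riskascero_0/\diff\Omega$ explicitly, and applies dominated convergence to $\diff\riskascero_2/\diff\Omega$ (with dominating function $L(x)/x^{\nnum+1}$, integrable by Assumption~\ref{assum: L cota O}\ref{assum item: cota O, inf}) to obtain the same limit \eqref{eq: lim der riskas}. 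Your approach instead performs a single Stieltjes integration by parts up front to reach \eqref{eq: der riskas via dL}, after which the split at $\xcerocr$ and the limit computation become very short. This is more compact, and the identity \eqref{eq: der riskas via dL} is a nice structural observation; the price is that you must control the total-variation measure $|\diff L|$ on $(\xinfcr,\infty)$, for which you invoke property~\ref{assum item: bien der} of Assumption~\ref{assum: L crec}, whereas the paper's proof of this lemma uses only property~\ref{assum item: bien izq} together with Assumption~\ref{assum: L cota O}. Since property~\ref{assum item: bien der} is part of the hypotheses of Theorem~\ref{teo: min resp Omega} anyway, this is harmless.

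One small point on the jump at $\xcerocr$: your claim that $L(\xcerocr-)\ge L(\xcerocr)$ does not literally follow from ``$L$ non-increasing on $(0,\xcerocr)$'' (the value $L(\xcerocr)$ is unconstrained by that hypothesis). The paper's proof tacitly relies on the same inequality when asserting that $\ell$ is non-negative. In both cases the fix is the same: replace $\xcerocr$ by any $\xcerocr_1\in(0,\xcerocr)$, for which $L(\xcerocr_1-)\ge L(\xcerocr_1)$ does follow from monotonicity on $(0,\xcerocr)$, and observe that the integral condition \eqref{eq: cond 0, integral} persists for $\xcerocr_1$ sufficiently close to $\xcerocr$ because $\int_{\xcerocr_1}^\infty(L(\xcerocr_1)-L(x))x^{-\nnum-1}\,\diff x\to\int_{\xcerocr}^\infty(L(\xcerocr-)-L(x))x^{-\nnum-1}\,\diff x\ge\int_{\xcerocr}^\infty(L(\xcerocr)-L(x))x^{-\nnum-1}\,\diff x>0$.
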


\begin{proof}
Let $\xcerocr$ be as in property~\ref{assum item: bien izq} in Assumption~\ref{assum: L crec}. Since $L$ is non-increasing for all $x$ smaller than $\xcerocr$, the function $\ell$ defined as
\begin{equation}
\ell(x) =
\begin{cases}
L(x)-L(\xcerocr) & \text{for } 0 < x < \xcerocr \\
0 & \text{for } x \geq \xcerocr
\end{cases}
\end{equation}
is non-negative and non-increasing. From \eqref{eq: riskas nu} and \eqref{eq: riskas x}, $\riskas$ can be expressed as $\riskascero_0 + \riskascero_1 + \riskascero_2$ with
\begin{align}
\label{eq: riskas 0}
\riskascero_0 & = \int_{\Omega/\xcerocr}^\infty \phi(\nu) L( \xcerocr ) \,\diff \nu ,\\
\label{eq: riskas 0'}
\riskascero_1 & = \int_{\Omega/\xcerocr}^\infty \phi(\nu) \ell( \Omega/\nu ) \,\diff \nu, \\
\label{eq: riskas 0''}
\riskascero_2 & = \int_{\xcerocr}^\infty \psi(x,\Omega) L(x) \,\diff x.
\end{align}
Each of these terms can be interpreted as the risk associated with a certain loss function for which Proposition~\ref{prop: riskas C1 Omega} applies.

Since $\ell$ is non-negative and non-increasing, for $\nu$ fixed the integrand in \eqref{eq: riskas 0'} is a non-negative, non-increasing function of $\Omega$. This implies that $\riskascero_1$ is a non-increasing function of $\Omega$, and thus $\diff \riskascero_1/\diff\Omega \leq 0$.

Regarding the term $\riskascero_0$,
\begin{equation}
\frac{\diff \riskascero_0}{\diff \Omega} = -\frac{\Omega^{\nnum-1} \exp(-\Omega/\xcerocr) L(\xcerocr)}{\xcerocr^\nnum(\nnum-1)!},
\end{equation}
which implies that
\begin{equation}
\label{eq: lim der riskas 0}
\lim_{\Omega \rightarrow 0} \frac{\diff \riskascero_0/\diff \Omega}{\Omega^{\nnum-1}}  =
- \frac{L(\xcerocr)}{\xcerocr^\nnum(\nnum-1)!}.
\end{equation}

As for $\riskascero_2$, from \eqref{eq: riskas 0''} it follows that
\begin{equation}
\label{eq: der riskas 1 x dos trozos}
\frac{\diff \riskascero_2}{\diff \Omega} =
\frac{\nnum \Omega^{\nnum-1}}{(\nnum-1)!} \int_{\xcerocr}^\infty \frac{\exp(-\Omega/x) L(x)}{x^{\nnum+1}} \,\diff x
- \frac{\Omega^\nnum}{(\nnum-1)!} \int_{\xcerocr}^\infty \frac{\exp(-\Omega/x) L(x)}{x^{\nnum+2}} \,\diff x.
\end{equation}
Interpreting the integrals in \eqref{eq: der riskas 1 x dos trozos} as Lebesgue integrals, and noting that $\exp(-\Omega/x) < 1$ for $\Omega, x \in \mathbb R^+$, Lebesgue's dominated convergence theorem \citep[theorem 10.27]{Apostol74} assures that
\begin{equation}
\lim_{\Omega \rightarrow 0} \int_{\xcerocr}^\infty \frac{\exp(-\Omega/x) L(x)}{x^{\nnum+1}} \,\diff x = \int_{\xcerocr}^\infty \frac{L(x)}{x^{\nnum+1}} \,\diff x,
\end{equation}
and similarly for the second integral. This implies that the first term in the right-hand side of \eqref{eq: der riskas 1 x dos trozos} dominates the second for $\Omega$ asymptotically small, i.e.
\begin{equation}
\label{eq: lim der riskas 0''}
\lim_{\Omega \rightarrow 0} \frac{\diff \riskascero_2/\diff \Omega}{\Omega^{\nnum-1}} =
\frac{\nnum}{(\nnum-1)!} \int_{\xcerocr}^\infty \frac{L(x)}{x^{\nnum+1}} \,\diff x.
\end{equation}

From \eqref{eq: lim der riskas 0} and \eqref{eq: lim der riskas 0''},
\begin{equation}
\label{eq: lim der riskas}
\begin{split}
\lim_{\Omega \rightarrow 0} \frac{\diff (\riskascero_0 + \riskascero_2)/\diff \Omega}{\Omega^{\nnum-1}}
& = - \frac{L(\xcerocr)}{\xcerocr^\nnum(\nnum-1)!} + \frac{\nnum}{(\nnum-1)!} \int_{\xcerocr}^\infty \frac{L(x)}{x^{\nnum+1}} \,\diff x \\
& = - \frac{\nnum}{(\nnum-1)!} \int_{\xcerocr}^\infty \frac{L(\xcerocr)-L(x)}{x^{\nnum+1}} \,\diff x.
\end{split}
\end{equation}
Combining \eqref{eq: lim der riskas} with the inequality \eqref{eq: cond 0, integral} from Assumption~\ref{assum: L crec}, the limit on the right-hand side of \eqref{eq: lim der riskas} is seen to be negative. This implies that there exists $\Omega_0$ such that $\diff(\riskascero_0+\riskascero_2)/\diff\Omega < 0$ for $\Omega \leq  \Omega_0$. Taking into account that $\diff \riskascero_1/\diff\Omega \leq 0$, it follows that $\diff \riskas/\diff\Omega < 0$ for $\Omega \leq \Omega_0$.
%\qed
\end{proof}

\begin{lemma}
\label{lemma: min resp Omega der}
Under the hypotheses of Theorem~\ref{teo: min resp Omega}, there exists $\Omega_0'$ such that $\diff \riskas/\diff \Omega > 0$ for all $\Omega \geq \Omega_0'$.
\end{lemma}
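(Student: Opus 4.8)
The plan is to mirror the proof of Lemma~\ref{lemma: min resp Omega izq}, now exploiting property~\ref{assum item: bien der} in Assumption~\ref{assum: L crec} at the point $\xinfcr$ instead of property~\ref{assum item: bien izq} at $\xcerocr$. Let $\xinfcr$ be as in property~\ref{assum item: bien der}, so that $L$ is non-decreasing on $(\xinfcr,\infty)$ and one of conditions~\ref{assum item item: i}, \ref{assum item item: ii} holds; in particular $L(x)\ge L(\xinfcr+)$ for $x>\xinfcr$. Put $\ell'(x)=L(x)-L(\xinfcr+)$ for $x>\xinfcr$ and $\ell'(x)=0$ for $x\le\xinfcr$; then $\ell'$ is non-negative and non-decreasing. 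Using \eqref{eq: riskas nu}, \eqref{eq: riskas x} and the change of variable $\nu=\Omega/x$ (so that $\int_0^{\Omega/\xinfcr}\phi(\nu)\,\diff\nu=\int_{\xinfcr}^\infty\psi(x,\Omega)\,\diff x$), decompose $\riskas=\riskascero_0+\riskascero_1+\riskascero_2$, where $\riskascero_0=L(\xinfcr+)\int_0^{\Omega/\xinfcr}\phi(\nu)\,\diff\nu$, $\riskascero_1=\int_{\xinfcr}^\infty\psi(x,\Omega)\ell'(x)\,\diff x$ and $\riskascero_2=\int_0^{\xinfcr}\psi(x,\Omega)L(x)\,\diff x$. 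Each of these three terms can be interpreted, up to a constant factor, as the asymptotic risk of a loss function for which Proposition~\ref{prop: riskas C1 Omega} applies, so each is a $C^1$ function of $\Omega$.

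As in Lemma~\ref{lemma: min resp Omega izq}, $\riskascero_1$ is immediate: written as $\int_0^\infty\phi(\nu)\ell'(\Omega/\nu)\,\diff\nu$, for each fixed $\nu$ the integrand is a non-decreasing function of $\Omega$ (because $\ell'$ is non-decreasing), hence $\riskascero_1$ is non-decreasing and $\diff\riskascero_1/\diff\Omega\ge 0$. The substance lies in showing $\diff(\riskascero_0+\riskascero_2)/\diff\Omega>0$ for $\Omega$ large. Since $L(\xinfcr+)\int_0^{\Omega/\xinfcr}\phi(\nu)\,\diff\nu=L(\xinfcr+)\bigl(1-\int_0^{\xinfcr}\psi(x,\Omega)\,\diff x\bigr)$, one has $\riskascero_0+\riskascero_2=L(\xinfcr+)+\int_0^{\xinfcr}\psi(x,\Omega)\bigl(L(x)-L(\xinfcr+)\bigr)\,\diff x$; differentiating under the integral (justified as in Proposition~\ref{prop: riskas C1 Omega}) and using $\partial\psi/\partial\Omega=\psi(x,\Omega)(\nnum/\Omega-1/x)$,
\begin{equation*}
\frac{\diff(\riskascero_0+\riskascero_2)}{\diff\Omega}=\int_0^{\xinfcr}\psi(x,\Omega)\Bigl(\frac1x-\frac\nnum\Omega\Bigr)\bigl(L(\xinfcr+)-L(x)\bigr)\,\diff x .
\end{equation*}
For $\Omega\ge 2\nnum\xinfcr$ the factor $1/x-\nnum/\Omega$ exceeds $1/(2\xinfcr)$ throughout $(0,\xinfcr)$. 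Property~\ref{assum item: bien der} supplies $\delta,c_1>0$ and an integer $t\ge 0$ with $L(\xinfcr+)-L(x)\ge c_1(\xinfcr-x)^{t}>0$ on $(\xinfcr-\delta,\xinfcr)$: in case~\ref{assum item item: i} take $t=0$, since $L(\xinfcr+)-L(x)\to L(\xinfcr+)-L(\xinfcr-)>0$ as $x\to\xinfcr-$; in case~\ref{assum item item: ii}, $L$ is continuous at $\xinfcr$ and Taylor's theorem gives $L(\xinfcr+)-L(x)=\frac{(-1)^{t-1}}{t!}\left.\frac{\diff^{t}L}{\diff x^{t}}\right|_{x=\xinfcr}(\xinfcr-x)^{t}+o\bigl((\xinfcr-x)^{t}\bigr)$ with positive leading coefficient. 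Splitting the above integral at $\xinfcr-\delta$: the part over $(\xinfcr-\delta,\xinfcr)$ has a positive integrand and, by an endpoint Laplace estimate (substitute $x=\xinfcr(1-s/\Omega)$), is at least a positive constant times $\Omega^{\nnum-1-t}\exp(-\Omega/\xinfcr)$; the part over $(0,\xinfcr-\delta)$ is, in absolute value, at most a polynomial in $\Omega$ times $\exp(-\Omega/(\xinfcr-\delta))$ (a neighbourhood of $0$ being controlled via $\nu=\Omega/x$, Lemma~\ref{lemma: lim gamma} and property~\ref{assum item: cota O, cero} in Assumption~\ref{assum: L cota O}, while on the rest $\exp(-\Omega/x)<\exp(-\Omega/(\xinfcr-\delta))$ and $L$ is bounded). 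Since $1/(\xinfcr-\delta)>1/\xinfcr$, the first part dominates, whence $\diff(\riskascero_0+\riskascero_2)/\diff\Omega>0$ for all sufficiently large $\Omega$; combined with $\diff\riskascero_1/\diff\Omega\ge 0$ this gives $\diff\riskas/\diff\Omega>0$ for $\Omega\ge\Omega_0'$.

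The main obstacle is precisely this last estimate. The integrand of $\diff(\riskascero_0+\riskascero_2)/\diff\Omega$ is positive only in a thin boundary layer near $x=\xinfcr$, so one must show rigorously that its contribution there --- of order $\Omega^{\nnum-1-t}\exp(-\Omega/\xinfcr)$ --- dominates the exponentially smaller contribution of $x$ bounded away from $\xinfcr$; this requires a careful (uniform) Laplace-type analysis, together with a separate treatment of the two sub-cases \ref{assum item item: i} and \ref{assum item item: ii} of property~\ref{assum item: bien der} to identify the leading behaviour of $L(\xinfcr+)-L(x)$ as $x\to\xinfcr-$.
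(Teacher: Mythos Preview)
Your approach is correct, and the high-level strategy matches the paper's: split off the part of $\riskas$ coming from $x>\xinfcr$ (your $\riskascero_1$, the paper's $\riskasinf_3+\riskasinf_4$), observe that its derivative is non-negative by monotonicity of $L$ there, and then show that the remaining contribution from $x<\xinfcr$ has strictly positive derivative for large $\Omega$ by an asymptotic analysis concentrated near $x=\xinfcr$. The two sub-cases~\ref{assum item item: i} and~\ref{assum item item: ii} are treated the same way in both proofs, via the jump or Taylor expansion of $L$ at $\xinfcr$, and both arrive at a leading term of order $\Omega^{\nnum-1-t}\exp(-\Omega/\xinfcr)$ (with $t=0$ in case~\ref{assum item item: i}).

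Where the executions differ is in how the ``remaining'' integral over $(0,\xinfcr)$ is handled. You keep it as a single expression $\int_0^{\xinfcr}\psi(x,\Omega)(1/x-\nnum/\Omega)(L(\xinfcr+)-L(x))\,\diff x$ and argue by a Laplace-type estimate: the boundary layer near $x=\xinfcr$ gives the positive main term, and the rest is exponentially smaller because $\exp(-\Omega/x)\le\exp(-\Omega/(\xinfcr-\delta))$ there (with a separate incomplete-gamma bound near $x=0$). The paper instead splits $(0,\xinfcr)$ into three pieces $(0,\xcero)$, $(\xcero,\xinfcr-h)$, $(\xinfcr-h,\xinfcr)$ and computes each derivative explicitly: the first two via the mean value theorem, and the crucial third piece in case~\ref{assum item item: ii} by expanding $(x-\xinfcr)^t$ with the binomial theorem and reducing the resulting integrals to incomplete gamma functions through Lemmas~\ref{lemma: der int} and~\ref{lemma: Gamma}, with a combinatorial identity (Lemma~\ref{lemma: der binom partic}) to identify the surviving leading coefficient. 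Your Laplace argument is shorter and more conceptual; the paper's computation is longer but fully explicit and does not require a separate uniform Laplace estimate. Either route is valid; the obstacle you flag in your final paragraph is real but manageable, and your sketch of how to control the pieces near $0$ and away from $\xinfcr$ is accurate.
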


\begin{proof}
If condition~\ref{assum item item: i} of property~\ref{assum item: bien der} in Assumption~\ref{assum: L crec} holds, let $H$ be chosen such that $0<H<L(\xinfcr+)-L(\xinfcr-)$. By definition of $L(\xinfcr-)$, there exists $h$ such that $L(x) \in (L(\xinfcr-)-H,L(\xinfcr-)+H)$ for all $x \in (\xinfcr-h,\xinfcr)$. If condition~\ref{assum item item: ii} holds, it stems that there exists $h$ such that $(-1)^{t-1} \diff^t L / \diff x^t$ is positive and continuous for $x \in (\xinfcr-h,\xinfcr)$. Thus, let $h$ be selected as has been indicated.

From property~\ref{assum item: cota O, cero} in Assumption~\ref{assum: L cota O}, there exist $\Kcero \in \mathbb R$, $\Mcero$ and $\xcero < \xinfcr-h$ such that
\begin{equation}
\label{eq: cota, cero, otra vez}
L(x) < \Mcero x^\Kcero \quad \text{for } x < \xcero.
\end{equation}
The asymptotic risk $\riskas$ can be expressed from \eqref{eq: riskas nu} and \eqref{eq: riskas x} as
$\riskasinf_0 + \riskasinf_1 + \riskasinf_2 + \riskasinf_3+\riskasinf_4$ with
\begin{align}
\label{eq: riskas 1}
\riskasinf_0 & = \int_{\xinfcr-h}^{\xinfcr} \psi(x,\Omega) L(x) \,\diff x, \\
\riskasinf_1 & = \int_0^\xcero \psi(x,\Omega) L(x) \,\diff x, \\
\riskasinf_2 & = \int_\xcero^{\xinfcr-h} \psi(x,\Omega) L(x) \,\diff x, \\
\riskasinf_3 & = \int_{\xinfcr}^\infty \psi(x,\Omega) L(\xinfcr+) \,\diff x, \\
\label{eq: riskasinf 4}
\riskasinf_4 & = \int_0^{\Omega/\xinfcr} \phi(\nu) (L(\Omega/\nu)-L(\xinfcr+)) \,\diff \nu.
\end{align}
Each of these terms corresponds to the risk associated with a certain loss function which satisfies Proposition~\ref{prop: riskas C1 Omega}.

By property~\ref{assum item: bien der} of Assumption~\ref{assum: L crec}, $L(x)-L(\xinfcr+)$ is non-negative and non-decreasing for $x > \xinfcr$. An argument analogous to that used for $\riskascero_1$ in Lemma~\ref{lemma: min resp Omega izq} shows that the term $\riskasinf_4$ given by \eqref{eq: riskasinf 4} is non-decreasing with $\Omega$, and thus
\begin{equation}
\label{eq: der riskas 1''''}
\frac{\diff\riskasinf_4}{\diff\Omega} \geq 0.
\end{equation}

According to Lemma~\ref{lemma: der int}, $\diff\riskasinf_3/\diff\Omega$ is given by
\begin{equation}
\label{eq: der riskas 1'''}
\frac{\diff\riskasinf_3}{\diff\Omega} = \frac{L(\xinfcr+) \Omega^{\nnum-1} \exp(-\Omega/\xinfcr)}{\xinfcr^\nnum (\nnum-1)!}.
\end{equation}

Computing
\begin{equation}
\frac{\diff\riskasinf_1}{\diff\Omega} = \int_0^\xcero \frac {\nnum \Omega^{\nnum-1} \exp(-\Omega/x)} {x^{\nnum+1} (\nnum-1)!} L(x) \,\diff x
- \int_0^\xcero \frac {\Omega^\nnum \exp(-\Omega/x)} {x^{\nnum+2} (\nnum-1)!} L(x) \,\diff x
\end{equation}
and using \eqref{eq: cota, cero, otra vez} it stems that
\begin{equation}
\label{eq: abs riskas 1'}
\left| \frac{\diff\riskasinf_1}{\diff\Omega} \right| \leq \frac{\Mcero \nnum \Omega^{\nnum-1}}{(\nnum-1)!} \int_0^\xcero \frac {\exp(-\Omega/x)} {x^{\nnum+1-\Kcero}} \,\diff x
+ \frac{\Mcero \Omega^{\nnum}}{(\nnum-1)!} \int_0^\xcero \frac {\exp(-\Omega/x)} {x^{\nnum+2-\Kcero}} \,\diff x.
\end{equation}
The integrals in \eqref{eq: abs riskas 1'} can be bounded as follows. Let $\lambda = (\xcero + \xinfcr-h)/(2(\xinfcr-h))$. It is seen that $\lambda$ and $1-\lambda$ are lower than $1$. Let the function $v_1: \mathbb R^+ \cup \{0\} \mapsto \mathbb R \cup \{0\}$ be defined as $v_1(x) = \exp(-\lambda\Omega/x)$ for $x > 0$ and $v_1(0)=0$. Since $\exp(-\lambda\Omega/x) \rightarrow 0$ as $x \rightarrow 0$, $v_1$ is continuous on $[0, \xcero]$. In addition, the function $v_2: \mathbb R \cup \{0\} \mapsto \mathbb R \cup \{0\}$ such that
\begin{equation}
v_2(x) = \frac{\exp(-(1-\lambda)\Omega/x)} {x^{\nnum+1-\Kcero}}
\end{equation}
for $x>0$ and $v_2(0)=0$ is non-negative and integrable on $[0, \xcero]$. Thus, the mean value theorem \citep[p.~190]{Fleming77} can be applied to the first integral in \eqref{eq: abs riskas 1'} to yield:
\begin{equation}
\label{eq: TVM int 1}
\int_0^\xcero \frac {\exp(-\Omega/x)} {x^{\nnum+1-\Kcero}} \,\diff x = \int_0^\xcero v_1(x) v_2(x) \,\diff x
= v_1(x_\mathrm m) \int_0^\xcero v_2(x) \,\diff x
% = v_1(x_\mathrm m) \int_0^\xcero \frac{\exp(-(1-\lambda)\Omega/x)} {x^{\nnum+1-\Kcero}} \,\diff x.
\end{equation}
for some $x_\mathrm m \in [0, \xcero]$. Actually, $x_\mathrm m$ cannot be $0$, because that would give $0$ in the right-hand side of \eqref{eq: TVM int 1}, whereas the left-hand side is greater than $0$. Thus $x_\mathrm m \in (0, \xcero]$. Similar arguments can be applied to the last integral in \eqref{eq: TVM int 1} to obtain
\begin{equation}
\label{eq: TVM int 1bis}
\int_0^\xcero \frac {\exp(-(1-\lambda)\Omega/x)} {x^{\nnum+1-\Kcero}} \,\diff x = \xcero \frac {\exp(-(1-\lambda)\Omega/x'_\mathrm m)} {{x'_\mathrm m}^{\nnum+1-\Kcero}}
\end{equation}
with $x'_\mathrm m \in (0, \xcero]$. Maximizing the right-hand side of \eqref{eq: TVM int 1bis} with respect to $x'_\mathrm m \in \mathbb R^+$ gives
\begin{equation}
\label{eq: TVM int 1ter}
\int_0^\xcero \frac {\exp(-(1-\lambda)\Omega/x)} {x^{\nnum+1-\Kcero}} \,\diff x \leq \xcero
\left( \frac {\nnum+1-\Kcero} {1-\lambda} \right)^{\nnum+1-\Kcero}
\frac { \exp(-(\nnum+1-\Kcero))} {\Omega^{\nnum+1-\Kcero}}.
\end{equation}
Combining \eqref{eq: TVM int 1} and \eqref{eq: TVM int 1ter},
\begin{equation}
\label{eq: TVM int 1cua}
\int_0^\xcero \frac {\exp(-\Omega/x)} {x^{\nnum+1-\Kcero}} \,\diff x \leq \frac {\xcero (\nnum+1-\Kcero)^{\nnum+1-\Kcero} \exp(-(\nnum+1-\Kcero+\lambda\Omega/x_\mathrm m))} {((1-\lambda)\Omega)^{\nnum+1-\Kcero}}.
\end{equation}
The second integral in \eqref{eq: abs riskas 1'} is bounded analogously:
\begin{equation}
\label{eq: TVM int 1cin}
\int_0^\xcero \frac {\exp(-\Omega/x)} {x^{\nnum+2-\Kcero}} \,\diff x \leq \frac {\xcero (\nnum+2-\Kcero)^{\nnum+2-\Kcero} \exp(-(\nnum+2-\Kcero+\lambda\Omega/x''_\mathrm m))} {((1-\lambda)\Omega)^{\nnum+2-\Kcero}}.
\end{equation}
with $x''_\mathrm m \in (0, \xcero]$. From \eqref{eq: abs riskas 1'}, \eqref{eq: TVM int 1cua} and \eqref{eq: TVM int 1cin},
\begin{equation}
\label{eq: TVM int 1sei}
\begin{split}
\left| \frac{\diff\riskasinf_1}{\diff\Omega} \right| & \leq
\frac {\Mcero \xcero \nnum (\nnum+1-\Kcero)^{\nnum+1-\Kcero} \Omega^{\Kcero-2} \exp(-(\nnum+1-\Kcero+\lambda\Omega/x_\mathrm m))} {(1-\lambda)^{\nnum+1-\Kcero} (\nnum-1)!} \\
& \quad + \frac {\Mcero \xcero (\nnum+2-\Kcero)^{\nnum+2-\Kcero} \Omega^{\Kcero-2} \exp(-(\nnum+2-\Kcero+\lambda\Omega/x''_\mathrm m))} {(1-\lambda)^{\nnum+2-\Kcero} (\nnum-1)!}.
\end{split}
\end{equation}
It is easily seen that $x_\mathrm m/\lambda, x''_\mathrm m/\lambda < \xcero < \xinfcr - h$. It thus follows from \eqref{eq: TVM int 1sei} that
\begin{equation}
\label{eq: der riskas 1'}
\left| \frac{\diff\riskasinf_1}{\diff\Omega} \right| < Q \Omega^{\Kcero-2} \exp(-\Omega/(\xinfcr-h))
\end{equation}
where $Q$ is independent of $\Omega$.

For $\diff\riskasinf_2/\diff\Omega$, by Assumption~\ref{assum: L var acot}, let $M$ be an upper bound of $L$ in the interval $(\xcero, \xinfcr-h)$. An argument based on the mean value theorem can also be applied here; in fact, it is slightly simpler than in the preceding paragraph because in this case the lower integration limit is greater than $0$:
\begin{equation}
\begin{split}
\left| \frac{\diff\riskasinf_2}{\diff\Omega} \right| & \leq
\frac{M \nnum \Omega^{\nnum-1}}{(\nnum-1)!} \int_\xcero^{\xinfcr-h} \frac{\exp(-\Omega/x)}{x^{\nnum+1}} \,\diff x
+ \frac{M \Omega^\nnum}{(\nnum-1)!} \int_\xcero^{\xinfcr-h} \frac{\exp(-\Omega/x)}{x^{\nnum+2}} \,\diff x \\
& = \frac { M \Omega^{\nnum-1} (\xinfcr-h-\xcero) } { (\nnum-1)! } \left( \frac{\nnum \exp(-\Omega/x'''_\mathrm m)}{{x'''_\mathrm m}^{\nnum+1}} + \frac{\Omega \exp(-\Omega/x''''_\mathrm m)}{{x''''_\mathrm m}^{\nnum+2}} \right) \\
\end{split}
\end{equation}
with $x'''_\mathrm m, x''''_\mathrm m \in [\xcero, \xinfcr-h]$. Therefore
\begin{equation}
\label{eq: der riskas 1''}
\left| \frac{\diff\riskasinf_2}{\diff\Omega} \right| < \frac { M (\xinfcr-h-\xcero) } { {\xcero}^{\nnum+1} (\nnum-1)! } \left( \nnum + \frac{\Omega}{\xcero} \right) \Omega^{\nnum-1} \exp(-\Omega/(\xinfcr-h)).
\end{equation}

To compute the derivative of $\riskasinf_0$, it is necessary to distinguish cases~\ref{assum item item: i} and \ref{assum item item: ii} of property~\ref{assum item: bien der} in Assumption~\ref{assum: L crec}. In case~\ref{assum item item: i}, since $L(x) \in (L(\xinfcr-)-H, L(\xinfcr-)+H)$ for all $x \in (\xinfcr-h,\xinfcr)$, the mean value theorem assures that there is some $\theta \in [L(\xinfcr-)-H,L(\xinfcr-)+H]$ such that
\begin{equation}
\begin{split}
\frac{\diff\riskasinf_0}{\diff\Omega} & = \int_{\xinfcr-h}^{\xinfcr} \frac{\partial \psi(x,\Omega)}{\partial \Omega} L(x) \,\diff x
= \theta \int_{\xinfcr-h}^{\xinfcr} \frac{\partial \psi(x,\Omega)}{\partial \Omega} \,\diff x \\
& = \frac{\theta}{(\nnum-1)!} \frac{\diff}{\diff\Omega}  \int_{\xinfcr-h}^{\xinfcr} \frac {\Omega^\nnum \exp(-\Omega/x)} {x^{\nnum+1} } \,\diff x.
\end{split}
\end{equation}
Applying Lemma~\ref{lemma: der int},
\begin{equation}
\label{eq: der princ grado 0}
\frac{\diff\riskasinf_0}{\diff\Omega} = \frac{\theta \Omega^{\nnum-1}}{(\nnum-1)!} \left( -\frac{\exp(-\Omega/\xinfcr)}{\xinfcr^\nnum} + \frac{\exp(-\Omega/(\xinfcr-h))}{(\xinfcr-h)^\nnum} \right).
\end{equation}
Using \eqref{eq: der riskas 1''''}, \eqref{eq: der riskas 1'''}, \eqref{eq: der riskas 1'}, \eqref{eq: der riskas 1''} and \eqref{eq: der princ grado 0},
\begin{equation}
\label{eq: der Omega infty grado 0}
\frac{\diff\riskas}{\diff\Omega} \geq \frac{(L(\xinfcr+)-\theta) \Omega^{\nnum-1}\exp(-\Omega/\xinfcr)}
{\xinfcr^\nnum (\nnum-1)!}
+ O \left( \Omega^q \exp(-\Omega/(\xinfcr-h)) \right)
\end{equation}
with $q = \max\{\nnum,\Kcero-2\}$. Since $h>0$ and $\theta \leq L(\xinfcr-)+H<L(\xinfcr+)$, from \eqref{eq: der Omega infty grado 0} it follows that
\begin{equation}
\label{eq: lim der Omega infty grado 0}
\lim_{\Omega \rightarrow \infty}  \left( \frac{\exp(\Omega/\xinfcr)}{\Omega^{\nnum-1}} \frac{\diff\riskas}{\diff\Omega} \right) \geq \frac{L(\xinfcr+)-\theta}{\xinfcr^\nnum (\nnum-1)!} > 0.
\end{equation}

In case~\ref{assum item item: ii}, since $\diff^t L/\diff x^t$ is continuous on $(\xinfcr-h,\xinfcr)$, Taylor's theorem \citep[volume 1, theorem 7.6]{Apostol67} can be applied to express $L(x)$ for $x \in (\xinfcr-h,\xinfcr)$ as
\begin{equation}
\label{eq: Taylor}
L(x) = L(\xinfcr-) + \frac{\theta' (x-\xinfcr)^t}{t!} = L(\xinfcr-) + \frac{\theta'}{t!} \sum_{j=0}^t \binom{t}{j}(-\xinfcr)^{t-j} x^j
\end{equation}
where $\theta'$ is the value of $\diff^t L/\diff x^t$ at some point within the interval $(\xinfcr-h,\xinfcr)$. The choice of $h$ assures that $(-1)^{t-1}\theta'$ is positive. Substituting \eqref{eq: Taylor} into \eqref{eq: riskas 1}, differentiating and making use of Lemma~\ref{lemma: der int} and \eqref{eq: gamma} gives
\begin{equation}
\label{eq: der princ 1}
\begin{split}
\frac{\diff\riskasinf_0}{\diff\Omega} & = \frac{L(\xinfcr-)}{\Omega(\nnum-1)!} \left[ -\left( \frac{\Omega}{\xinfcr} \right)^\nnum \exp( -\Omega/\xinfcr ) + \left( \frac{\Omega}{\xinfcr-h} \right)^\nnum \exp( -\Omega/(\xinfcr-h) ) \right] \\
& \quad + \frac{\theta'}{(\nnum-1)!\, t!} \sum_{j=0}^t \binom{t}{j}(-\xinfcr)^{t-j} \Omega^{j-1} \left[
-\left( \frac{\Omega}{\xinfcr} \right)^{\nnum-j} \exp( -\Omega/\xinfcr ) \right. \\
& \quad + \left( \frac{\Omega}{\xinfcr-h} \right)^{\nnum-j} \exp( -\Omega/(\xinfcr-h) ) \\
& \quad + \left. j \left( \Gamma\left(\nnum-j,\frac{\Omega}{\xinfcr}\right) - \Gamma\left(\nnum-j,\frac{\Omega}{\xinfcr-h}\right) \right) \right].
\end{split}
\end{equation}
The identity $\sum_{j=0}^t \binom{t}{j}(-1)^{t-j}=0$ implies that
\begin{equation}
\sum_{j=0}^t \binom{t}{j}(-\xinfcr)^{t-j} \Omega^{j-1} \left( \frac{\Omega}{\xinfcr} \right)^{\nnum-j}
= \xinfcr^{t-\nnum} \Omega^{\nnum-1} \sum_{j=0}^t \binom{t}{j}(-1)^{t-j} = 0,
\end{equation}
and thus \eqref{eq: der princ 1} simplifies to
\begin{equation}
\label{eq: der princ 1bis}
\begin{split}
\frac{\diff\riskasinf_0}{\diff\Omega} & = \frac{L(\xinfcr-) \Omega^{\nnum-1}}{(\nnum-1)!} \left( -\frac{\exp( -\Omega/\xinfcr)}{\xinfcr^\nnum} + \frac{\exp(-\Omega/(\xinfcr-h))}{(\xinfcr-h)^\nnum} \right) \\
& \quad + \frac{\Omega^{\nnum-1} \theta' \exp(-\Omega/(\xinfcr-h))}{(\nnum-1)!\, t!} \sum_{j=0}^t \binom{t}{j} \frac{(-\xinfcr)^{t-j}}{(\xinfcr-h)^{\nnum-j}} \\
& \quad + \frac{\theta'}{(\nnum-1)!\, t!} \sum_{j=0}^t \binom{t}{j}j(-\xinfcr)^{t-j} \Omega^{j-1} \left[ \Gamma\left(\nnum-j,\frac{\Omega}{\xinfcr}\right) - \Gamma\left(\nnum-j,\frac{\Omega}{\xinfcr-h}\right) \right].
\end{split}
\end{equation}
From Lemma~\ref{lemma: Gamma}, $\Omega^{j-1}\Gamma(\nnum-j,\Omega/\xinfcr)$ for $j \leq \nnum-1$ is given by
\begin{equation}
\label{eq: Omega Gamma a}
\Omega^{j-1}\Gamma\left(\nnum-j,\frac{\Omega}{\xinfcr}\right) = \exp( -\Omega/\xinfcr ) \sum_{k=j}^{\nnum-1} \frac{ \ff{(\nnum-j-1)}{\nnum-k-1} \Omega^{k-1}}{ \xinfcr^{k-j} },
\end{equation}
whereas for $j \geq \nnum$ and for any $w \in \mathbb N$
\begin{equation}
\label{eq: Omega Gamma b}
\begin{split}
\Omega^{j-1}\Gamma\left(\nnum-j,\frac{\Omega}{\xinfcr}\right) & = \exp( -\Omega/\xinfcr ) \sum_{k=\nnum-w}^{\nnum-1} \frac{ \ff{(\nnum-j-1)}{\nnum-k-1} \Omega^{k-1} }{ \xinfcr^{k-j} } \\
& \quad + O \left( \Omega^{\nnum-w-2} \exp( -\Omega/\xinfcr ) \right).
\end{split}
\end{equation}
Replacing $\xinfcr$ by $\xinfcr-h$ in \eqref{eq: Omega Gamma a} and \eqref{eq: Omega Gamma b} it is seen that
\begin{equation}
\label{eq: Omega Gamma desp}
\Omega^{j-1}\Gamma\left(\nnum-j,\frac{\Omega}{\xinfcr-h}\right) =
O \left( \Omega^{\nnum-2} \exp( -\Omega/(\xinfcr-h) ) \right).
\end{equation}
Setting $w = t$ in \eqref{eq: Omega Gamma b} and substituting \eqref{eq: Omega Gamma a}--\eqref{eq: Omega Gamma desp}
% *!*: son esas dos y {eq: Omega Gamma b}
into \eqref{eq: der princ 1bis} yields
\begin{equation}
\label{eq: der princ 2}
\begin{split}
\frac{\diff\riskasinf_0}{\diff\Omega} & = -\frac{L(\xinfcr-) \Omega^{\nnum-1} \exp(-\Omega/\xinfcr)}{\xinfcr^{\nnum}(\nnum-1)!} \\
& \quad + \frac{\theta' \exp(-\Omega/\xinfcr)}{(\nnum-1)!\, t!} \left[ \sum_{j=0}^{\min\{t,\nnum-1\}} \binom{t}{j} j (-\xinfcr)^{t-j} \sum_{k=j}^{\nnum-1} \frac{\ff{(\nnum-j-1)}{\nnum-k-1} \Omega^{k-1}}{\xinfcr^{k-j}} \right. \\
& \quad + \left. \sum_{j=\nnum}^t \binom{t}{j}j(-\xinfcr)^{t-j} \sum_{k=\nnum-t}^{\nnum-1} \frac{\ff{(\nnum-j-1)}{\nnum-k-1} \Omega^{k-1}}{\xinfcr^{k-j}} \right] \\
& \quad + O \left( \Omega^{\nnum-t-2} \exp( -\Omega/\xinfcr) \right)
\end{split}
\end{equation}
(the term $O( \Omega^{\nnum-t-2} \exp(-\Omega/\xinfcr) )$ could be substituted by a lower-order term if $t<\nnum$, but this is unnecessary for the proof). Since $\ff{(\nnum-j-1)}{\nnum-k-1}=0$ for $k < j < \nnum$, the summation range of the first sum over $k$ in \eqref{eq: der princ 2} can be extended from $k=j,\ldots,\nnum-1$ to $k=\min\{0,\nnum-t\},\ldots,\nnum-1$. On the other hand, the second sum over $j$ is empty if $t < \nnum$. Thus the second sum over $k$ only appears if $t \geq \nnum$, and in this case $\min\{0,\nnum-t\} = \nnum-t$. Therefore the lower limit in the latter sum can also be expressed as $k=\min\{0,\nnum-t\}$. With these changes, \eqref{eq: der princ 2} is rewritten as
\begin{equation}
\label{eq: der princ 3}
\begin{split}
\frac{\diff\riskasinf_0}{\diff\Omega} & = -\frac{L(\xinfcr-) \Omega^{\nnum-1} \exp(-\Omega/\xinfcr)}{\xinfcr^{\nnum}(\nnum-1)!} + \frac{\xinfcr^{t-1} \theta' \exp(-\Omega/\xinfcr)}{(\nnum-1)!\, t!} \\
& \quad \cdot \sum_{k=\min\{0,\nnum-t\}}^{\nnum-1} \left(\frac{\Omega}{\xinfcr}\right)^{k-1} \sum_{j=0}^t \binom{t}{j}j \ff{(\nnum-j-1)}{\nnum-k-1} (-1)^{t-j} \\
& \quad + O \left( \Omega^{\nnum-t-2} \exp(-\Omega/\xinfcr) \right).
\end{split}
\end{equation}
From Lemma~\ref{lemma: der binom partic}, the inner sum in \eqref{eq: der princ 3} equals $0$ for $k=\nnum-t+1,\nnum-t+2,\ldots,\nnum-1$ and $(-1)^{t-1} t!$ for $k=\nnum-t$. If $t < \nnum$, the terms with index $k=0,1,\ldots,\nnum-t-1$ are $O ( \Omega^{\nnum-t-2} \exp(-\Omega/\xinfcr) )$. Therefore
\begin{equation}
\label{eq: der princ 4}
\begin{split}
\frac{\diff\riskasinf_0}{\diff\Omega} & =
-\frac{L(\xinfcr-) \Omega^{\nnum-1} \exp(-\Omega/\xinfcr)}{\xinfcr^{\nnum}(\nnum-1)!}
+ \frac{(-1)^{t-1} \xinfcr^{2t-\nnum} \theta' \Omega^{\nnum-t-1} \exp(-\Omega/\xinfcr)}{(\nnum-1)!} \\
& \quad + O \left( \Omega^{\nnum-t-2} \exp(-\Omega/\xinfcr) \right).
\end{split}
\end{equation}

Using \eqref{eq: der riskas 1''''}, \eqref{eq: der riskas 1'''}, \eqref{eq: der riskas 1'}, \eqref{eq: der riskas 1''} and \eqref{eq: der princ 4}, and considering that $L(\xinfcr-)=L(\xinfcr+)$,
\begin{equation}
\label{eq: der Omega infty grado mayor}
\frac{\diff\riskas}{\diff\Omega} \geq \frac{(-1)^{t-1} \xinfcr^{2t-\nnum} \theta' \Omega^{\nnum-t-1} \exp(-\Omega/\xinfcr)}{(\nnum-1)!}
+ O \left( \Omega^{\nnum-t-2} \exp(-\Omega/\xinfcr) \right).
\end{equation}
Since $(-1)^{t-1} \theta' > 0$, this implies that
\begin{equation}
\label{eq: der Omega infty grado mayor 2}
\lim_{\Omega \rightarrow \infty} \left( \frac{\exp(\Omega/\xinfcr)}{\Omega^{\nnum-t-1}} \frac{\diff\riskas}{\diff\Omega} \right) \geq \frac{(-1)^{t-1} \xinfcr^{2t-\nnum} \theta'}{(\nnum-1)!} > 0.
\end{equation}
As a consequence of \eqref{eq: lim der Omega infty grado 0} and \eqref{eq: der Omega infty grado mayor 2}, in either case~\ref{assum item item: i} or \ref{assum item item: ii} of property~\ref{assum item: bien der} in Assumption~\ref{assum: L crec}, there exists $\Omega_0'$ such that $\diff\riskas/\diff\Omega > 0$ for $\Omega \geq \Omega_0'$.
%\qed
\end{proof}

\begin{proof}[Proof of Theorem~\ref{teo: min resp Omega}]
%\paragraph{Proof of Theorem~\ref{teo: min resp Omega}}
From Lemmas~\ref{lemma: min resp Omega izq} and \ref{lemma: min resp Omega der}, there exist $\Omega_0$, $\Omega_0'$ such that, denoting by ${\riskas|}_\Omega$ the value of $\riskas$ corresponding to a given $\Omega$,
\begin{align}
\label{eq: max en int cerr 1}
{\riskas|}_\Omega & > {\riskas|}_{\Omega_0} \quad \text{for } \Omega < \Omega_0, \\
\label{eq: max en int cerr 2}
{\riskas|}_\Omega & > {\riskas|}_{\Omega_0'} \quad \text{for } \Omega > \Omega_0'.
\end{align}
Proposition~\ref{prop: riskas C1 Omega} implies that $\riskas$ is a continuous function of $\Omega$. Therefore, this function restricted to the interval $[\Omega_0, \Omega_0']$ has an absolute maximum \citep[theorem 4.28]{Apostol74}. Because of \eqref{eq: max en int cerr 1} and \eqref{eq: max en int cerr 2}, this is the absolute maximum of $\riskas$ over $\mathbb R^+$.
%\qed
\end{proof}

% Lemma~\ref{lemma: sop fin conv unif} below is used in the proof of Theorem~\ref{teo: opt}. Lemmas~\ref{lemma: sop fin cont} and \ref{lemma: sop fin lim limsup} are necessary to establish Lemma~\ref{lemma: sop fin conv unif}.
% *·*

\begin{lemma}
\label{lemma: sop fin cont}
Under the hypotheses of Theorem~\ref{teo: opt}, given $\csopfin \in \mathbb R^+$, $\riskasfs$ as defined by \eqref{eq: riskas sop fin} is a continuous function of $\Omega \in \mathbb R^+$.
\end{lemma}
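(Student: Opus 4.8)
The plan is to show that $\riskasfs$, regarded as a function of $\Omega$, is continuous at an arbitrary point $\Omega_0 \in \mathbb R^+$, by a dominated-convergence argument carried out along sequences; the representation \eqref{eq: riskas sop fin} is convenient here precisely because its limits of integration do not depend on $\Omega$. First I would use \citep[theorem 4.12]{Apostol74} to reduce the claim to showing that the value of $\riskasfs$ at $\Omega_m$ converges to its value at $\Omega_0$ for every sequence $(\Omega_m)$ in $\mathbb R^+$ with $\Omega_m \rightarrow \Omega_0$. Fixing such a sequence, and discarding its first finitely many terms, I may assume $\Omega_m \in [\Omega_0/2,\, 2\Omega_0]$ for all $m$. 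Writing $[a,b]$ for the compact interval whose endpoints are $\nnum/\csopfin$ and $\nnum\csopfin$, so that $\riskasfs$ equals $\int_a^b \phi(\nu) L(\Omega/\nu)\,\diff\nu$ up to a sign that depends only on $\csopfin$, I observe that for $\nu \in [a,b]$ and $\Omega \in [\Omega_0/2,\, 2\Omega_0]$ the argument $\Omega/\nu$ stays within the fixed compact interval $K = [\Omega_0/(2b),\, 2\Omega_0/a] \subset \mathbb R^+$.

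I would then extract two properties of $L$ on $K$, both guaranteed by the hypotheses of Theorem~\ref{teo: opt}: by Assumption~\ref{assum: L var acot}, $L$ is of bounded variation and hence bounded on $K$, say $|L(x)| \le M_K$ for $x \in K$; and by Assumption~\ref{assum: L disc fin} (in fact already by Assumption~\ref{assum: L disc fin compacto}), $L$ has only finitely many discontinuities in $K$. The first property yields the uniform bound $|\phi(\nu) L(\Omega_m/\nu)| \le M_K\,\phi(\nu)$ for all $m$ and all $\nu \in [a,b]$, and $M_K\,\phi$ is bounded, hence integrable, on the bounded interval $[a,b]$, so it serves as a dominating function. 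For the pointwise behaviour, $\nu \mapsto \Omega_0/\nu$ is a homeomorphism of $[a,b]$ onto a compact subinterval of $K$, so by the second property the set $E$ of $\nu \in [a,b]$ at which $L$ is discontinuous at $\Omega_0/\nu$ is finite, hence of Lebesgue measure zero; for every $\nu \in [a,b] \setminus E$, continuity of $L$ at $\Omega_0/\nu$ together with $\Omega_m/\nu \rightarrow \Omega_0/\nu$ gives $\phi(\nu) L(\Omega_m/\nu) \rightarrow \phi(\nu) L(\Omega_0/\nu)$, so the integrands converge almost everywhere on $[a,b]$.

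Interpreting the integrals in the Lebesgue sense --- legitimate, since each integrand is bounded on $[a,b]$ with only finitely many discontinuities there, just as in the corresponding step of the proof of Proposition~\ref{prop: riskas C1 Omega} --- Lebesgue's dominated convergence theorem \citep[theorem 10.27]{Apostol74} then gives $\int_a^b \phi(\nu) L(\Omega_m/\nu)\,\diff\nu \rightarrow \int_a^b \phi(\nu) L(\Omega_0/\nu)\,\diff\nu$, i.e.\ convergence of $\riskasfs$ at $\Omega_m$ to its value at $\Omega_0$; since $\Omega_0$ is arbitrary, $\riskasfs$ is continuous on $\mathbb R^+$. I expect the main obstacle to be the presence of discontinuities in $L$: a direct estimate based on uniform continuity of $L$ is unavailable, which is precisely why the argument must route through almost-everywhere convergence plus domination. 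The bounded-variation assumption is what keeps the dominating function integrable, and it is essential that $[a,b]$ is compact and bounded away from both $0$ and $\infty$, so that the asymptotic conditions in Assumption~\ref{assum: L cota Theta} and the monotonicity conditions in Assumption~\ref{assum: L crec} play no role in this lemma.
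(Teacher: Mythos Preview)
Your proof is correct and takes a genuinely different route from the paper's. The paper decomposes $L$ on the relevant compact interval as a sum $L_{\mathrm c}+L_{\mathrm d}$ of a continuous function and a piecewise constant function with finitely many jumps, then gives an explicit $\epsilon$--$\delta$ argument: for $L_{\mathrm c}$ it invokes the Heine--Cantor theorem to get uniform continuity, and for $L_{\mathrm d}$ it bounds the Lebesgue measure of the set of $\nu$ where $L_{\mathrm d}(\Omega'/\nu)\neq L_{\mathrm d}(\Omega/\nu)$ by a constant times $|\Omega'-\Omega|$, using Lemma~\ref{lemma: phi} to control $\phi$. Your dominated-convergence argument is shorter and sidesteps the decomposition entirely: boundedness of $L$ on the compact set $K$ supplies the dominating function, and the finitely many discontinuities of $L$ give almost-everywhere pointwise convergence of the integrands along any sequence $\Omega_m\rightarrow\Omega_0$. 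The paper's approach yields a slightly more quantitative estimate (Lipschitz in $\Omega$ for the jump contribution), but neither argument needs this for the bare continuity conclusion, and your use of dominated convergence is entirely in keeping with how the paper handles similar situations elsewhere (e.g.\ in the proof of Lemma~\ref{lemma: min resp Omega izq}).
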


\begin{proof}
From
% properties \ref{assum item: var acot} and \ref{assum item: disc fin} in
Assumptions~\ref{assum: L var acot} and \ref{assum: L disc fin}, $L$ is continuous except possibly at a finite number of points, where it can only have removable discontinuities or jumps. Since removable discontinuities do not have any effect on the integral in \eqref{eq: riskas sop fin}, they can be disregarded. Thus in the following it is assumed that $L$ only has jump discontinuities. Let $D$ be the number of discontinuity points, located at $x_1 < x_2 < \cdots < x_D$. The function $L$ can be decomposed as the sum of $L_\mathrm c$ and $L_\mathrm d$, where $L_\mathrm c$ is continuous and $L_\mathrm d$ is piecewise constant with jumps at $x_1,\ldots,x_D$. Accordingly, $\riskasfs = \riskasfs_\mathrm c + \riskasfs_\mathrm d$, where $\riskasfs_\mathrm c$ and $\riskasfs_\mathrm d$ are given as in \eqref{eq: riskas sop fin} with $L$ replaced by $L_\mathrm c$ and $L_\mathrm d$ respectively.

For any $\Omega' \neq \Omega$, let $\riskasfs'$ denote the right-hand side of \eqref{eq: riskas sop fin} with $\Omega$ replaced by $\Omega'$, and let $\riskasfs'_\mathrm c$ and $\riskasfs'_\mathrm d$ be defined similarly. For $\epsilon>0$ arbitrary, it is necessary to find $\delta>0$ such that $|\riskasfs'-\riskasfs|<\epsilon$ for $|\Omega'-\Omega|<\delta$. Consider an arbitrary $\delta_0 \in (0,\Omega)$. Since $L_\mathrm c$ is continuous, by the Heine-Cantor theorem \citep[theorem 4.47]{Apostol74} it is uniformly continuous on the interval $[(\Omega-\delta_0)/(\nnum \csopfin), (\Omega+\delta_0) \csopfin/\nnum]$. This interval contains the values $\Omega/\nu$ and $\Omega'/\nu$ for $|\Omega'-\Omega|<\delta_0$, $\nu \in [\nnum/\csopfin, \nnum\csopfin]$. By virtue of this, defining $\epsilon_\mathrm c = \epsilon/(2\nnum(\csopfin-1/\csopfin))$, let $\delta_\mathrm c<\delta_0$ be chosen such that $|L_\mathrm c(\Omega'/\nu)-L_\mathrm c(\Omega/\nu)| < \epsilon_\mathrm c$ for all $|\Omega'-\Omega|<\delta_\mathrm c$, $\nu \in [\nnum/\csopfin, \nnum\csopfin]$. Taking into account Lemma~\ref{lemma: phi}, it follows that
\begin{equation}
\label{eq: L cont epsilon}
|\riskasfs'_\mathrm c-\riskasfs_\mathrm c|
\leq \int_{\nnum/\csopfin}^{\nnum \csopfin} |L( \Omega/\nu )-L( \Omega'/\nu )| \,\diff \nu
< \nnum \left(\csopfin-\frac 1 \csopfin \right) \epsilon_\mathrm c = \frac \epsilon 2 \quad \text{for } |\Omega'-\Omega|<\delta_\mathrm c.
\end{equation}

By construction, there exists an upper bound $M_\mathrm d$ on $|L_\mathrm d(x)|$, $x \in \mathbb R^+$. Since $L_\mathrm d(\Omega/\nu)$, considered as a function of $\nu$, has jumps at $\Omega/x_1,\ldots,\Omega/x_D$, associated with each discontinuity point $\Omega/x_i$ there is an interval of values of $\nu$ for which $L_\mathrm d(\Omega'/\nu) \neq L_\mathrm d(\Omega/\nu)$. The width of this interval is $|\Omega'-\Omega|/x_i \leq |\Omega'-\Omega|/x_1$, and $|L_\mathrm d(\Omega'/\nu) - L_\mathrm d(\Omega/\nu)| \leq 2 M_\mathrm d$ for $\nu$ within this interval. There are at most $D$ such intervals contained in $[\nnum/\csopfin,\nnum\csopfin]$, and for any value of $\nu$ not belonging to any of these intervals it holds that $L_\mathrm d(\Omega'/\nu) = L_\mathrm d(\Omega/\nu)$. Using Lemma~\ref{lemma: phi} again, it is seen that $|\riskasfs'_\mathrm d-\riskasfs_\mathrm d| \leq 2 D M_\mathrm d |\Omega'-\Omega|/x_1$. Thus there exists $\delta_\mathrm d$ such that
\begin{equation}
\label{eq: L disc epsilon}
|\riskasfs'_\mathrm d-\riskasfs_\mathrm d| < \frac \epsilon 2 \quad \text{for } |\Omega'-\Omega|<\delta_\mathrm d.
\end{equation}
Taking $\delta = \min\{\delta_\mathrm c,\delta_\mathrm d\}$, it follows from \eqref{eq: L cont epsilon} and \eqref{eq: L disc epsilon} that
\begin{equation}
|\riskasfs'-\riskasfs| \leq |\riskasfs'_\mathrm c-\riskasfs_\mathrm c| + |\riskasfs'_\mathrm d-\riskasfs_\mathrm d| < \frac \epsilon 2 + \frac \epsilon 2 = \epsilon \quad \text{for } |\Omega'-\Omega|<\delta,
\end{equation}
which shows that $\riskasfs$ is a continuous function of $\Omega$.
%\qed
\end{proof}

\begin{lemma}
\label{lemma: sop fin lim limsup}
Under the hypotheses of Theorem~\ref{teo: opt}, and with $\riskasfs$ defined by \eqref{eq: riskas sop fin},
\begin{equation}
\lim_{\csopfin \rightarrow \infty} \limsup_{\Omega \rightarrow 0} \frac{\riskas-\riskasfs}{\riskasfs}
= \lim_{\csopfin \rightarrow \infty} \limsup_{\Omega \rightarrow \infty} \frac{\riskas-\riskasfs}{\riskasfs}
= 0.
\end{equation}
\end{lemma}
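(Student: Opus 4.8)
Since $L \geq 0$ we have $\riskas \geq \riskasfs$, and (as the lower bounds below will show) $\riskasfs > 0$ in both regimes, so the ratios are well defined near the two endpoints and are non-negative; it therefore suffices to exhibit, for each fixed $\csopfin$, an upper bound on $\limsup_{\Omega\rightarrow 0}(\riskas-\riskasfs)/\riskasfs$, resp.\ $\limsup_{\Omega\rightarrow\infty}(\riskas-\riskasfs)/\riskasfs$, that tends to $0$ as $\csopfin\rightarrow\infty$. The plan is to write the difference as its two tails,
\[
\riskas - \riskasfs = \int_0^{\nnum/\csopfin} \phi(\nu)\, L\!\left(\frac{\Omega}{\nu}\right)\diff\nu + \int_{\nnum\csopfin}^\infty \phi(\nu)\, L\!\left(\frac{\Omega}{\nu}\right)\diff\nu,
\]
and convert everything to an $x$-picture via the substitution $\nu = \Omega/x$ that already links \eqref{eq: riskas nu} and \eqref{eq: riskas x}: it expresses $\riskasfs$, the first tail and the second tail as $\int \psi(x,\Omega) L(x)\,\diff x$ over $[\Omega/(\nnum\csopfin),\Omega\csopfin/\nnum]$, $(\Omega\csopfin/\nnum,\infty)$ and $(0,\Omega/(\nnum\csopfin))$ respectively. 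The single elementary identity I will use repeatedly is, for $0 \leq a < b \leq \infty$ and any $K < \nnum$, $\int_a^b \psi(x,\Omega)\, x^K \,\diff x = \frac{\Omega^K}{(\nnum-1)!}\int_{\Omega/b}^{\Omega/a} \nu^{\nnum-1-K}\exp(-\nu)\,\diff\nu$ (the same change of variable), whose right-hand side is a difference of incomplete gamma functions, finite because $K < \nnum$.

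For $\Omega \rightarrow 0$: by property~\ref{assum item: cota Theta, cero} of Assumption~\ref{assum: L cota Theta} fix $0 < m_0 \leq M_0$ and $x_0$ with $m_0 x^\Kcero \leq L(x) \leq M_0 x^\Kcero$ for $x < x_0$; by property~\ref{assum item: cota Theta, inf} of the same Assumption and by Assumption~\ref{assum: L var acot} fix $M_\infty, x_\infty$ and a bound $B$ with $L(x) \leq M_\infty x^\Kinf$ for $x > x_\infty$ and $L \leq B$ on $[x_0,x_\infty]$. For $\Omega$ small enough that $\Omega\csopfin/\nnum < x_0$ the $\riskasfs$-interval lies in $(0,x_0)$, so $\riskasfs \geq m_0 \Omega^\Kcero \kappa_\csopfin/(\nnum-1)!$ with $\kappa_\csopfin = \gamma(\nnum-\Kcero,\nnum\csopfin) - \gamma(\nnum-\Kcero,\nnum/\csopfin) \rightarrow \Gamma(\nnum-\Kcero) > 0$ as $\csopfin\rightarrow\infty$. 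The tail over $(0,\Omega/(\nnum\csopfin))$ also lies in $(0,x_0)$, hence is at most $M_0 \Omega^\Kcero \Gamma(\nnum-\Kcero,\nnum\csopfin)/(\nnum-1)!$. The tail over $(\Omega\csopfin/\nnum,\infty)$ I would split at $x_0$ and $x_\infty$: its $(\Omega\csopfin/\nnum,x_0)$ piece is at most $M_0 \Omega^\Kcero \gamma(\nnum-\Kcero,\nnum/\csopfin)/(\nnum-1)!$; its $(x_0,x_\infty)$ piece is $O(\Omega^\nnum)$, since there $\psi(x,\Omega)\leq \Omega^\nnum/((\nnum-1)!\,x_0^{\nnum+1})$ and $L\leq B$; and its $(x_\infty,\infty)$ piece is $O\big(\Omega^\Kinf\gamma(\nnum-\Kinf,\Omega/x_\infty)\big) = O(\Omega^\nnum)$ by \eqref{eq: lim gamma 0} of Lemma~\ref{lemma: lim gamma}. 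Dividing by $\riskasfs$ and taking $\limsup_{\Omega\rightarrow 0}$ annihilates the $O(\Omega^{\nnum-\Kcero})$ remainders and leaves $\frac{M_0}{m_0\kappa_\csopfin}\big(\gamma(\nnum-\Kcero,\nnum/\csopfin)+\Gamma(\nnum-\Kcero,\nnum\csopfin)\big)$, which tends to $0$ as $\csopfin\rightarrow\infty$.

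The case $\Omega\rightarrow\infty$ is entirely symmetric under the interchange of $0$ and $\infty$ (and of $\Kcero,x_0$ with $\Kinf,x_\infty$): for $\Omega$ large the $\riskasfs$-interval lies in $(x_\infty,\infty)$, so $\riskasfs \geq m_\infty\Omega^\Kinf\kappa'_\csopfin/(\nnum-1)!$ with $\kappa'_\csopfin \rightarrow \Gamma(\nnum-\Kinf) > 0$; the tail over $(\Omega\csopfin/\nnum,\infty)$ is at most $M_\infty\Omega^\Kinf\gamma(\nnum-\Kinf,\nnum/\csopfin)/(\nnum-1)!$; and the tail over $(0,\Omega/(\nnum\csopfin))$, split at $x_0$ and $x_\infty$, has its $(x_\infty,\Omega/(\nnum\csopfin))$ piece at most $M_\infty\Omega^\Kinf\Gamma(\nnum-\Kinf,\nnum\csopfin)/(\nnum-1)!$ and its $(0,x_\infty)$ piece equal to $O\big(\Omega^\nnum\exp(-\Omega/x_\infty)\big)+O\big(\Omega^\Kcero\Gamma(\nnum-\Kcero,\Omega/x_0)\big) = o(\Omega^\Kinf)$ by \eqref{eq: lim gamma inf} of Lemma~\ref{lemma: lim gamma}. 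Taking $\limsup_{\Omega\rightarrow\infty}$ and then $\csopfin\rightarrow\infty$ again gives $0$; together with $\riskas\geq\riskasfs$ this yields both limits. The step I expect to be the main obstacle is the accurate bookkeeping of which incomplete gamma function each tail produces after the substitution $\nu = \Omega/x$: the pieces near the \emph{fixed} thresholds $x_0$ (as $\Omega\rightarrow 0$) and $x_\infty$ (as $\Omega\rightarrow\infty$) must yield the factors $\gamma(\nnum-\Kcero,\nnum/\csopfin)$, resp.\ $\gamma(\nnum-\Kinf,\nnum/\csopfin)$, which vanish as $\csopfin\rightarrow\infty$, while $\riskasfs$ must retain $\kappa_\csopfin$, resp.\ $\kappa'_\csopfin$, whose limits are strictly positive; getting the integration endpoints right is exactly what makes the ratio collapse, and the conditions $\Kcero < \nnum$ and $\Kinf < \nnum$ in Assumption~\ref{assum: L cota Theta} are precisely what keep all the gamma integrals convergent and the remainder terms of lower order.
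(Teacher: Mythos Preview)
Your proof is correct and follows essentially the same approach as the paper's: the paper works in the $\nu$-variable and splits $\riskas-\riskasfs$ into four pieces $\riskascero_1,\ldots,\riskascero_4$ over $(0,\Omega/\xinf)$, $(\Omega/\xinf,\Omega/\xcero)$, $(\Omega/\xcero,\nnum/\csopfin)$, $(\nnum\csopfin,\infty)$, which under $\nu=\Omega/x$ are exactly your $x$-intervals $(x_\infty,\infty)$, $(x_0,x_\infty)$, $(\Omega\csopfin/\nnum,x_0)$, $(0,\Omega/(\nnum\csopfin))$, and arrives at the identical final bound $\frac{\Mcero(\gamma(\nnum-\Kcero,\nnum/\csopfin)+\Gamma(\nnum-\Kcero,\nnum\csopfin))}{\mcero(\Gamma(\nnum-\Kcero,\nnum/\csopfin)-\Gamma(\nnum-\Kcero,\nnum\csopfin))}$. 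The only cosmetic difference is that you phrase the lower bound on $\riskasfs$ via $\kappa_\csopfin=\gamma(\nnum-\Kcero,\nnum\csopfin)-\gamma(\nnum-\Kcero,\nnum/\csopfin)$, which equals the paper's $\Gamma(\nnum-\Kcero,\nnum/\csopfin)-\Gamma(\nnum-\Kcero,\nnum\csopfin)$.
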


\begin{proof}
According to property~\ref{assum item: cota Theta, cero} in Assumption~\ref{assum: L cota Theta}, there exist $\Kcero < \nnum$ and $\mcero, \Mcero, \xcero \in \mathbb R^+$ such that $\mcero x^\Kcero < L(x) < \Mcero x^\Kcero$ for $x < \xcero$, that is,
\begin{equation}
\label{eq: cota Theta, cero}
\mcero (\Omega/\nu)^\Kcero  < L(\Omega/\nu) < \Mcero (\Omega/\nu)^\Kcero \quad \text{for } \nu > \Omega/\xcero.
\end{equation}
Similarly, property~\ref{assum item: cota Theta, inf} implies that there exist $\Kinf < \nnum$; $\minf, \Minf \in \mathbb R^+$; and $\xinf > \xcero$ such that
\begin{equation}
\label{eq: cota Theta, inf}
\minf (\Omega/\nu)^\Kinf < L(\Omega/\nu) < \Minf (\Omega/\nu)^\Kinf \quad \text{for } \nu < \Omega/\xinf.
\end{equation}
From Assumption~\ref{assum: L var acot}, $L$ is of bounded variation on $[\xcero, \xinf]$, and thus there exists $M$ such that $L(x) \leq M$ for $x \in [\xcero, \xinf]$, that is,
\begin{equation}
\label{eq: cota Theta, medio}
L(\Omega/\nu) \leq M \quad \text{for } \Omega/\xinf \leq \nu \leq \Omega/\xcero.
\end{equation}

The case $\Omega \rightarrow 0$ is analyzed first. Given $\csopfin \in \mathbb R^+$, it will be assumed that $\Omega < \nnum \xcero/\csopfin$. Under this assumption, any $\nu$ within the integration interval in \eqref{eq: riskas sop fin} exceeds $\Omega/\xcero$. Thus, applying \eqref{eq: cota Theta, cero},
\begin{equation}
\riskasfs > \mcero \Omega^\Kcero \int_{\nnum/\csopfin}^{\nnum \csopfin} \frac{\nu^{\nnum-\Kcero-1} \exp(-\nu)}{(\nnum-1)!} \,\diff \nu
=  \frac{\mcero \Omega^\Kcero ( \Gamma(\nnum-\Kcero,\nnum/\csopfin) - \Gamma(\nnum-\Kcero,\nnum \csopfin) )}{(\nnum-1)!}.
\end{equation}

The difference $\riskas-\riskasfs$ can be expressed as\footnote{Note that this decomposition, and the one to be used for $\Omega \rightarrow \infty$, are different from those used in the proofs of Lemmas~\ref{lemma: min resp Omega izq} and \ref{lemma: min resp Omega der} respectively, although the same notation is used for simplicity.}
$\riskascero_1 + \riskascero_2 + \riskascero_3 + \riskascero_4$, where each term is an integral as in \eqref{eq: riskas sop fin} with the integration interval respectively given as $(0,\Omega/\xinf)$, $(\Omega/\xinf, \Omega/\xcero)$, $(\Omega/\xcero, \nnum/\csopfin)$ and $(\nnum \csopfin, \infty)$. In the first case, \eqref{eq: cota Theta, inf} implies that
\begin{equation}
\riskascero_1 < \frac{\Minf \Omega^\Kinf \gamma(\nnum-\Kinf,\Omega/\xinf)}{(\nnum-1)!},
\end{equation}
and thus
\begin{equation}
\label{eq: lemma: sop fin 1}
\frac{\riskascero_1}{\riskasfs} < \frac{\Minf \Omega^{\Kinf-\Kcero} \gamma(\nnum-\Kinf,\Omega/\xinf)}{\mcero (\Gamma(\nnum-\Kcero,\nnum/\csopfin) - \Gamma(\nnum-\Kcero,\nnum \csopfin))}.
\end{equation}
Using the equality \eqref{eq: lim gamma 0} from Lemma~\ref{lemma: lim gamma}, and taking into account that $\Kcero, \Kinf < \nnum$ by Assumption~\ref{assum: L cota Theta}, it is seen that the right-hand side of \eqref{eq: lemma: sop fin 1} tends to $0$ as $\Omega \rightarrow 0$. Since $\riskascero_1$ and $\riskasfs$ are both positive, this implies that
\begin{equation}
\label{eq: lemma: sop fin 2}
\lim_{\Omega \rightarrow 0} \frac{\riskascero_1}{\riskasfs} = 0.
\end{equation}
As for the term $\riskascero_2$, using \eqref{eq: cota Theta, medio},
\begin{equation}
\riskascero_2 \leq \frac{M ( \gamma(\nnum,\Omega/\xcero) - \gamma(\nnum,\Omega/\xinf) )}{(\nnum-1)!} < \frac{M \gamma(\nnum,\Omega/\xcero)}{(\nnum-1)!},
\end{equation}
and thus
\begin{equation}
\frac{\riskascero_2}{\riskasfs} < \frac{ M \Omega^{-\Kcero} \gamma(\nnum,\Omega/\xcero) }
{ \mcero ( \Gamma(\nnum-\Kcero,\nnum/\csopfin) - \Gamma(\nnum-\Kcero,\nnum \csopfin) ) }.
\end{equation}
Using \eqref{eq: lim gamma 0} again, and taking into account that $\Kcero < \nnum$, it stems that
\begin{equation}
\label{eq: lemma: sop fin 3}
\lim_{\Omega \rightarrow 0} \frac{\riskascero_2}{\riskasfs} = 0.
\end{equation}
Regarding the third term, \eqref{eq: cota Theta, cero} holds for all $\nu$ within the integration interval, and thus
\begin{equation}
\riskascero_3 < \frac{\Mcero \Omega^\Kcero (\gamma(\nnum-\Kcero,\nnum/\csopfin) - \gamma(\nnum-\Kcero,\Omega/\xcero) )}{(\nnum-1)!} < \frac{\Mcero \Omega^\Kcero \gamma(\nnum-\Kcero,\nnum/\csopfin)}{(\nnum-1)!}.
\end{equation}
Therefore
\begin{equation}
\label{eq: lemma: sop fin 4}
\frac{\riskascero_3}{\riskasfs} < \frac{ \Mcero \gamma(\nnum-\Kcero,\nnum/\csopfin)}
{ \mcero ( \Gamma(\nnum-\Kcero,\nnum/\csopfin) - \Gamma(\nnum-\Kcero,\nnum \csopfin) ) }.
\end{equation}
Similarly, the fourth term satisfies
\begin{equation}
\riskascero_4 < \frac{\Mcero \Omega^\Kcero \Gamma(\nnum-\Kcero,\nnum \csopfin)}{(\nnum-1)!},
\end{equation}
and therefore
\begin{equation}
\label{eq: lemma: sop fin 5}
\frac{\riskascero_4}{\riskasfs} < \frac{ \Mcero \Gamma(\nnum-\Kcero,\nnum \csopfin)}
{ \mcero ( \Gamma(\nnum-\Kcero,\nnum/\csopfin) - \Gamma(\nnum-\Kcero,\nnum \csopfin) ) }.
\end{equation}

From \eqref{eq: lemma: sop fin 2}, \eqref{eq: lemma: sop fin 3}, \eqref{eq: lemma: sop fin 4} and \eqref{eq: lemma: sop fin 5} it follows that
\begin{equation}
\label{eq: lemma sop fin res 0}
\limsup_{\Omega \rightarrow 0} \frac{\riskas-\riskasfs}{\riskasfs} \leq
\frac{ \Mcero ( \gamma(\nnum-\Kcero,\nnum/\csopfin) + \Gamma(\nnum-\Kcero,\nnum \csopfin))}
{ \mcero ( \Gamma(\nnum-\Kcero,\nnum/\csopfin) - \Gamma(\nnum-\Kcero,\nnum \csopfin) ) }.
\end{equation}
The right-hand side of \eqref{eq: lemma sop fin res 0} is seen to converge to $0$ as $\csopfin \rightarrow \infty$, and thus so does the left-hand side. This establishes the first part of the result.

The analysis for $\Omega \rightarrow \infty$ is similar. Given $\csopfin \in \mathbb R^+$, it is assumed that $\Omega > \nnum \xinf \csopfin$. The difference $\riskas-\riskasfs$ is expressed as $\riskasinf_1 + \riskasinf_2 + \riskasinf_3 + \riskasinf_4$, where each term is an integral as in \eqref{eq: riskas sop fin} with integration intervals respectively given as $(0,\nnum/\csopfin)$, $(\nnum\csopfin, \Omega/\xinf)$, $(\Omega/\xinf, \Omega/\xcero)$ and $(\Omega/\xcero, \infty)$. Arguments analogous to those used for $\Omega \rightarrow 0$ establish that
\begin{equation}
\label{eq: lemma sop fin res inf}
\limsup_{\Omega \rightarrow \infty} \frac{\riskas-\riskasfs}{\riskasfs} \leq \frac{ \Minf ( \gamma(\nnum-\Kinf,\nnum/\csopfin) + \Gamma(\nnum-\Kinf,\nnum \csopfin) ) }
{ \minf ( \Gamma(\nnum-\Kinf,\nnum/\csopfin) - \Gamma(\nnum-\Kinf,\nnum \csopfin) ) }.
\end{equation}
The right-hand side of \eqref{eq: lemma sop fin res inf} is seen to converge to $0$ as $\csopfin \rightarrow \infty$, and thus so does the left-hand side. This establishes the second part of the result.
%\qed
\end{proof}

\begin{lemma}
\label{lemma: sop fin conv unif}
Under the hypotheses of Theorem~\ref{teo: opt}, considering $\riskasfs$ and $\riskas$ as functions of $\Omega \in \mathbb R^+$, $\riskasfs/\riskas \rightarrow 1$ uniformly on $\mathbb R^+$ as $\csopfin \rightarrow \infty$.
\end{lemma}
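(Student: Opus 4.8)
The plan is to reduce the assertion to $\sup_{\Omega \in \mathbb R^+} \bigl(1 - \riskasfs/\riskas\bigr) \rightarrow 0$ as $\csopfin \rightarrow \infty$, and to establish this by treating $\Omega$ separately on a right-neighbourhood of $0$, a neighbourhood of $\infty$, and a compact interval in between, using throughout that $\riskasfs$ is monotone in $\csopfin$. First I would record the needed structural facts. The asymptotic risk $\riskas$ is strictly positive for every $\Omega \in \mathbb R^+$: by property~\ref{assum item: cota Theta, cero} in Assumption~\ref{assum: L cota Theta} there are $\mcero, \xcero \in \mathbb R^+$ with $L(x) > \mcero x^\Kcero$ for $x < \xcero$, hence $\riskas \geq \mcero \Omega^\Kcero \int_{\Omega/\xcero}^\infty \phi(\nu)\nu^{-\Kcero}\,\diff\nu > 0$ by Lemma~\ref{lemma: phi}. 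In \eqref{eq: riskas sop fin} the integrand $\phi(\nu)L(\Omega/\nu)$ is non-negative and independent of $\csopfin$, while the interval $[\nnum/\csopfin,\nnum\csopfin]$ grows with $\csopfin$; therefore $\riskasfs$ is non-decreasing in $\csopfin$ and, being the partial integral of the convergent improper integral defining $\riskas$, satisfies $\riskasfs \rightarrow \riskas$ as $\csopfin \rightarrow \infty$ for each fixed $\Omega$. Consequently $g_\csopfin := 1 - \riskasfs/\riskas$ takes values in $[0,1]$, is non-increasing in $\csopfin$, converges pointwise to $0$, and is continuous on $\mathbb R^+$, being built from $\riskasfs$ (continuous by Lemma~\ref{lemma: sop fin cont}) and the everywhere-positive continuous function $\riskas$ of Proposition~\ref{prop: riskas C1 Omega}.

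Next, fix $\epsilon>0$ and deal with the two tails. Since $\riskasfs \leq \riskas$, one has $g_\csopfin \leq (\riskas-\riskasfs)/\riskasfs$ wherever $\riskasfs>0$, which in particular holds for $\Omega$ near $0$ and for $\Omega$ near $\infty$, as the lower bounds on $\riskasfs$ derived in the proof of Lemma~\ref{lemma: sop fin lim limsup} show. Hence the inequalities \eqref{eq: lemma sop fin res 0} and \eqref{eq: lemma sop fin res inf}, whose right-hand sides tend to $0$ as $\csopfin \rightarrow \infty$, yield a value $\csopfin_0$ with $\limsup_{\Omega \rightarrow 0} g_{\csopfin_0}(\Omega) < \epsilon$ and $\limsup_{\Omega \rightarrow \infty} g_{\csopfin_0}(\Omega) < \epsilon$, and therefore constants $a<b$ in $\mathbb R^+$ such that $g_{\csopfin_0}(\Omega) < \epsilon$ for all $\Omega \in (0,a) \cup (b,\infty)$. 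On the compact interval $[a,b]$ the functions $g_\csopfin$ are continuous, non-increasing in $\csopfin$, and converge pointwise to the continuous function $0$, so a standard compactness (Dini) argument applied to this monotone family provides $\csopfin_1 \geq \csopfin_0$ with $g_\csopfin(\Omega) < \epsilon$ for all $\Omega \in [a,b]$ and all $\csopfin \geq \csopfin_1$.

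It then remains to splice: for $\csopfin \geq \csopfin_1$ and $\Omega \in (0,a) \cup (b,\infty)$, monotonicity in $\csopfin$ gives $g_\csopfin(\Omega) \leq g_{\csopfin_0}(\Omega) < \epsilon$, while on $[a,b]$ the bound $g_\csopfin(\Omega) < \epsilon$ holds by the previous paragraph; hence $\sup_{\Omega \in \mathbb R^+} g_\csopfin(\Omega) < \epsilon$ for all $\csopfin \geq \csopfin_1$, which is exactly uniform convergence of $\riskasfs/\riskas$ to $1$ on $\mathbb R^+$. The main obstacle is that $\mathbb R^+$ is not compact, so neither Dini's theorem nor the estimates of Lemma~\ref{lemma: sop fin lim limsup} (which control only the limits superior as $\Omega \to 0$ and $\Omega \to \infty$, not uniform suprema) suffice in isolation; what makes the decomposition into two tails plus a compact core work is precisely the monotonicity of $\riskasfs$ in $\csopfin$, which lets the tail control obtained at a single $\csopfin_0$ propagate to every larger $\csopfin$. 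A minor point to verify is that the right-hand sides of \eqref{eq: lemma sop fin res 0} and \eqref{eq: lemma sop fin res inf} dominate $g_\csopfin$ itself and not merely $(\riskas-\riskasfs)/\riskasfs$, which is immediate from $\riskasfs \leq \riskas$.
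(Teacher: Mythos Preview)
Your proposal is correct and follows essentially the same strategy as the paper: control the tails $\Omega \to 0$ and $\Omega \to \infty$ via Lemma~\ref{lemma: sop fin lim limsup} together with the monotonicity of $\riskasfs$ in $\csopfin$, then treat a remaining compact interval of $\Omega$ separately. The only cosmetic difference is that the paper works with the ratio $(\riskas-\riskasfs)/\riskasfs$ and on the compact interval bounds it by $\max_\Omega(\riskas-\riskasfs)/\min_\Omega\riskasfs$, whereas you work with the automatically bounded quantity $(\riskas-\riskasfs)/\riskas \in [0,1]$ and invoke Dini's theorem explicitly.
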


\begin{proof}
The result is equivalent to the statement that for any $\epsilon>0$ there exists $\csopfin_0$ such that $|\riskas/\riskasfs-1|<\epsilon$ for all $\Omega \in \mathbb R^+$ and for all $\csopfin > \csopfin_0$. Consider $\epsilon>0$ arbitrary. Let $R(\csopfin)$ and $R'(\csopfin)$ respectively denote $\limsup_{\Omega \rightarrow 0} (\riskas-\riskasfs)/\riskasfs$ and $\limsup_{\Omega \rightarrow \infty} (\riskas-\riskasfs)/\riskasfs$. Since $L$ is a non-negative function, from \eqref{eq: riskas sop fin} it is seen that $\riskasfs$ is a non-negative, non-decreasing function of $\csopfin$ for any $\Omega$. By Lemma~\ref{lemma: sop fin lim limsup}, $R(\csopfin)$ and $R'(\csopfin)$ tend to $0$ as $\csopfin \rightarrow \infty$, and thus there exists $\csopfin_1$ such that $R(\csopfin_1), R'(\csopfin_1) \leq \epsilon/2$. By definition of $R(\csopfin)$, there exists $\Omega_0$ such that the following inequality holds (note that the left-hand side is a function of $\sigma$ and $\Omega$):
\begin{equation}
\label{eq: cota R pre}
\frac{\riskas-\riskasfs}{\riskasfs} < R(\csopfin_1)+ \frac \epsilon 2 \leq \epsilon \quad \text{for } \Omega<\Omega_0,\ \sigma=\sigma_1.
\end{equation}
The non-decreasing character of $\riskasfs$ with $\sigma$ implies that \eqref{eq: cota R pre} also holds for $\sigma > \sigma_1$, that is,
\begin{equation}
\label{eq: cota R}
\frac{\riskas-\riskasfs}{\riskasfs} < \epsilon \quad \text{for } \Omega < \Omega_0, \ \csopfin \geq \csopfin_1.
\end{equation}
Analogously, there exists $\Omega_0'>\Omega_0$ such that
\begin{equation}
\label{eq: cota R'}
\frac{\riskas-\riskasfs}{\riskasfs} < \epsilon \quad \text{for } \Omega > \Omega_0', \ \csopfin \geq \csopfin_1.
\end{equation}

According to Lemma~\ref{lemma: sop fin cont}, for $\csopfin$ fixed, $\riskasfs$ is a continuous function of $\Omega \in [\Omega_0,\Omega_0']$, and therefore it has an absolute minimum on that interval, which will be denoted as $S_1(\csopfin)$. The non-negative and non-decreasing character of $\riskasfs$ with $\sigma$ implies that $S_1$ is also a non-negative, non-decreasing function. In addition, $S_1(\csopfin) > 0$ for all $\csopfin$ greater than a certain value $\csopfin_2$. This can be seen as follows. By Assumption~\ref{assum: L cota Theta}, $L(x)$ is non-zero for all $x$ outside a bounded interval. If $\csopfin$ is sufficiently large, i.e.~greater than a certain $\csopfin_2$, for any $\Omega \in [\Omega_0,\Omega_0']$ the integration interval in \eqref{eq: riskas sop fin} contains a subinterval where $L$ is non-zero, which gives $\riskasfs>0$. Thus $S_1(\csopfin) > 0$ for $\csopfin > \csopfin_2$.

By arguments similar to those in the above paragraph, $\riskas-\riskasfs$, considered as a function of $\Omega$, has an absolute maximum on $[\Omega_0,\Omega_0']$; and this maximum, denoted as $S_2(\csopfin)$, tends to $0$ as $\csopfin \rightarrow \infty$. Therefore, defining $S(\csopfin)=S_2(\csopfin)/S_1(\csopfin)$ for $\csopfin > \csopfin_2$,
\begin{equation}
\label{eq: cota S pre}
(\riskas-\riskasfs)/\riskasfs \leq S(\csopfin) \quad \text{for }\Omega \in [\Omega_0,\Omega_0'],\ \csopfin > \csopfin_2;
\end{equation}
and $S(\csopfin) \rightarrow 0$ as $\csopfin \rightarrow \infty$. Thus, for the considered $\epsilon$, there exists $\csopfin_3 \geq \csopfin_2$ such that $S(\csopfin) < \epsilon$ for $\csopfin \geq \csopfin_3$. Combined with \eqref{eq: cota S pre}, this gives
\begin{equation}
\label{eq: cota S}
(\riskas-\riskasfs)/{\riskasfs} < \epsilon \quad \text{for } \Omega \in [\Omega_0,\Omega_0'], \ \csopfin \geq \csopfin_3.
\end{equation}
From \eqref{eq: cota R}, \eqref{eq: cota R'} and \eqref{eq: cota S}, choosing $\csopfin_0 = \max\{\csopfin_1,\csopfin_3\}$ is sufficient to satisfy $|\riskas/\riskasfs-1|<\epsilon$ for $\Omega \in \mathbb R^+$, $\csopfin > \csopfin_0$. This completes the proof.
%\qed
\end{proof}

\begin{proof}[Proof of Theorem~\ref{teo: opt}]
%\paragraph{Proof of Theorem~\ref{teo: opt}}
The result will be proved by contradiction. Assume that there exists a possibly randomized estimator $\hatvap$ with $\limsup_{p \rightarrow 0} \risk(p) < \risk^*$. This implies that there exist $\theta < 1$ and a probability $p_\theta$ such that the estimator has
\begin{equation}
\label{eq: hip risk, teo opt}
\risk(p) < \theta \risk^* \quad \text{for all } p < p_\theta.
\end{equation}
For $\nden = \nnum, \nnum+1, \ldots$, let $\Pi_\nden$ denote the distribution function of $\hatvap$ conditioned on $\vanden = \nden$.

By Lemma~\ref{lemma: sop fin conv unif}, let $\csopfin$ be selected such that
\begin{equation}
\label{eq: teo opt 1}
\int_{\nnum/\csopfin}^{\nnum \csopfin} \phi(\nu) L( \Omega/\nu ) \,\diff \nu >
\sqrt[3]{\theta} \int_0^\infty \phi(\nu) L( \Omega/\nu ) \,\diff \nu \quad \text{for all } \Omega \in \mathbb R^+.
\end{equation}
In particular, this implies that
\begin{equation}
\label{eq: teo opt 1 bis}
\int_{\nnum/\csopfin}^{\nnum \csopfin} \phi(\nu) L( \Omega/\nu ) \,\diff \nu >
\sqrt[3]{\theta} \risk^* \quad \text{for all } \Omega \in \mathbb R^+.
\end{equation}

Given $\nu_1, \nu_2$ with $\nu_2 > \nu_1 > 0$, according to Lemma~\ref{lemma: Phi conv unif}, $\Phi(p,\nu) \rightarrow \phi(\nu)$ uniformly on $[\nu_1,\nu_2]$ as $p \rightarrow 0$. By virtue of this, let $p_1 < p_\theta$ be such that
\begin{equation}
\label{eq: teo opt 1 ter}
|\Phi(p,\nu)-\phi(\nu)| < (1 - \sqrt[3]{\theta}) \phi(\nu) \quad \text{for all } p < p_1,\ \nu \in [\nnum/\csopfin, \nnum \csopfin].
\end{equation}
Let $\ndenini = \lceil \nnum \csopfin/p_1 \rceil$. Taking into account that $\lim_{w \rightarrow \infty} ( \sum_{\nden=1}^w 1/\nden - \log w ) = \gamma$, where $\gamma$ is the Euler-Mascheroni constant \citep[equation 6.1.3]{Abramowitz70}, it is easy to see that
\begin{equation}
\lim_{p \rightarrow 0} \left( \sum_{\nden=\ndenini}^{\lfloor \nnum/(\csopfin p) \rfloor} \frac{1}{\nden} - \log\frac{p_1}{p} \right) = \gamma + \log\frac{\nnum}{\csopfin p_1} - \sum_{\nden=1}^{\ndenini-1} \frac{1}{\nden}.
\end{equation}
This implies that there exist $\lambda>0$ and $p'_0$ such that
\begin{equation}
\label{eq: teo opt 2}
\sum_{\nden=\ndenini}^{\lfloor \nnum/(\csopfin p) \rfloor} \frac{1}{\nden} - \log\frac{p_1}{p} > -\lambda \quad \text{for all }p \leq p'_0.
\end{equation}
Let $\lambda$ and $p'_0$ be chosen such that \eqref{eq: teo opt 2} holds, and let $p''_0$ be defined by the equation
\begin{equation}
\label{eq: teo opt 2 bis}
\log\frac{p_1}{p''_0} = \frac{\lambda}{1-\sqrt[3]{\theta}}.
\end{equation}
Since $\lambda>0$ and $\theta<1$, it follows that $p''_0 < p_1$.

Let $p_0 = \min\{p'_0, p''_0\}$. For a given $\nden$, the measure associated with the distribution function $\Pi_\nden$ is obviously finite, and thus sigma-finite. This implies \citep[theorem 18.3]{Billingsley95} that for each $\nden$ the integral in \eqref{eq: risk gen}, considered as a function of $p$, is measurable with respect to Lebesgue measure. In addition, since $p_1 < p_\theta$, it stems from \eqref{eq: hip risk, teo opt} that the series in \eqref{eq: risk gen} converges for $p \leq p_1$. This assures \citep[theorem 13.4(ii)]{Billingsley95} that $\risk(p)$ restricted to $p \leq p_1$ is measurable. Therefore, the integral
\begin{equation}
\label{eq: teo opt 3}
X = \int_{p_0}^{p_1} \frac{\risk(p)}{p} \,\diff p
\end{equation}
exists in the Lebesgue sense, and according to \eqref{eq: hip risk, teo opt} it satisfies
\begin{equation}
\label{eq: teo opt 3 bis}
X < \theta \risk^* \int_{p_0}^{p_1} \frac{\diff p}{p} = \theta \risk^* \log\frac{p_1}{p_0}.
\end{equation}
Substituting \eqref{eq: risk gen} into \eqref{eq: teo opt 3},
\begin{equation}
\label{eq: teo opt 3 ter}
X = \int_{p_0}^{p_1} \frac 1 p \sum_{\nden=\nnum}^\infty f(\nden) \left( \int_0^\infty L(y/p) \,\diff \Pi_\nden(y) \right) \,\diff p.
\end{equation}
Defining $\ndenfin = \lfloor \nnum/(\csopfin p_0) \rfloor$, it is clear from \eqref{eq: teo opt 3 ter} that
\begin{equation}
\label{eq: teo opt 4}
X > \sum_{\nden=\ndenini}^{\ndenfin} \int_{p_0}^{p_1} \left( \int_0^\infty \frac{f(\nden) L(y/p)}{p} \,\diff \Pi_\nden(y) \right) \,\diff p.
\end{equation}
Since both measures in \eqref{eq: teo opt 4} are sigma-finite, and both the inner and outer integrals are finite, the order of integration can be reversed \citep[theorem~18.3]{Billingsley95}, which gives
\begin{equation}
\label{eq: teo opt 5}
X > \sum_{\nden=\ndenini}^{\ndenfin} \int_0^\infty \left( \int_{p_0}^{p_1} \frac{f(\nden) L(y/p)}{p} \,\diff p \right) \diff \Pi_\nden(y) .
\end{equation}
Making the change of variable $\nu = \nden p$ in the inner integral and taking into account that $f(\nden)/p = \Phi(p,\nden p)$, \eqref{eq: teo opt 5} becomes
\begin{equation}
\label{eq: teo opt 6}
X > \sum_{\nden=\ndenini}^{\ndenfin} \frac{1}{\nden} \int_0^\infty \left( \int_{\nden p_0}^{\nden p_1} \Phi(\nu/\nden,\nu) L(\nden y/\nu) \,\diff \nu \right) \diff \Pi_\nden(y) .
\end{equation}
For $\ndenini \leq \nden \leq \ndenfin$ it holds that $\nden p_0 \leq \nnum/\csopfin$ and $\nden p_1 \geq \nnum \csopfin$. Therefore
\begin{equation}
\label{eq: teo opt 7}
X > \sum_{\nden=\ndenini}^{\ndenfin} \frac{1}{\nden} \int_0^\infty \left( \int_{\nnum/\csopfin}^{\nnum \csopfin} \Phi(\nu/\nden,\nu) L(\nden y/\nu) \,\diff \nu \right) \diff \Pi_\nden(y) .
\end{equation}
For $\nu \in [\nnum/\csopfin, \nnum\csopfin]$ and $\ndenini \leq \nden \leq \ndenfin$ it holds that $\nu/\nden < p_1$. Thus \eqref{eq: teo opt 1 ter} gives $\Phi(\nu/\nden,\nu) > \sqrt[3]{\theta} \phi(\nu)$. Substituting into \eqref{eq: teo opt 7},
\begin{equation}
\label{eq: teo opt 8}
X > \sqrt[3]{\theta} \sum_{\nden=\ndenini}^{\ndenfin} \frac{1}{\nden} \int_0^\infty \left( \int_{\nnum/\csopfin}^{\nnum \csopfin} \phi(\nu) L(\nden y/\nu) \,\diff \nu \right) \diff \Pi_\nden(y).
\end{equation}
From \eqref{eq: teo opt 1 bis}, the inner integral in \eqref{eq: teo opt 8} exceeds $\sqrt[3]{\theta} \risk^*$, and thus
\begin{equation}
\label{eq: teo opt 9}
X > \theta^{2/3} \risk^* \sum_{\nden=\ndenini}^{\ndenfin} \frac{1}{\nden}.
\end{equation}
Since $p_0 \leq p'_0$ and $p_0 \leq p''_0 < p_1$, \eqref{eq: teo opt 2} and \eqref{eq: teo opt 2 bis} give
\begin{equation}
\label{eq: teo opt 10}
\sum_{\nden=\ndenini}^{\ndenfin} \frac{1}{\nden} > -\lambda + \log \frac{p_1}{p_0}
% = \log \frac{p_1}{p_0} \left(1 - \frac{\lambda}{\log p_1/p_0} \right)
\geq \log \frac{p_1}{p_0} \left(1 - \frac{\lambda}{\log(p_1/p''_0)} \right) = \sqrt[3]{\theta} \log \frac{p_1}{p_0}.
\end{equation}
Substituting into \eqref{eq: teo opt 9},
\begin{equation}
\label{eq: teo opt 11}
X > \theta \risk^* \log \frac{p_1}{p_0},
\end{equation}
in contradiction with \eqref{eq: teo opt 3 bis}. This establishes the result.
%\qed
\end{proof}

\begin{proof}[Proof of Proposition~\ref{prop: as nonas}]
%\paragraph{Proof of Proposition~\ref{prop: as nonas}}
The proof is analogous to that of \citet[proposition~1]{Mendo10}.
%\qed
\end{proof}

\begin{proof}[Proof of Proposition~\ref{prop: MSE gar}]
%\paragraph{Proof of Proposition~\ref{prop: MSE gar}}
For the considered estimator,
\begin{equation}
\label{eq: risk MSE}
\frac{\E[(\hatvap -p)^2]}{p^2} = \frac{(\nnum-2)^2}{p^2} \E\left[\frac{1}{(\vanden-1)^2}\right] - \frac{2(\nnum-2)}{p} \E\left[\frac{1}{\vanden-1}\right] + 1.
\end{equation}
The equality
\begin{equation}
\label{eq: E 1 vanden 1}
\E\left[\frac{1}{\vanden-1}\right] = \frac{p}{\nnum-1}
\end{equation}
directly stems from the fact that \eqref{eq: hatvap nnum 1 vanden 1} is unbiased. On the other hand, according to \citet{Mikulski76}, for $p \in (0,1)$
\begin{equation}
\label{eq: E 1 vanden 1 cuad}
\Var\left[\frac{\nnum-1}{\vanden-1}\right] \leq \frac{p^2(1-p)}{\nnum-2} < \frac{p^2}{\nnum-2}.
\end{equation}
From \eqref{eq: E 1 vanden 1} and \eqref{eq: E 1 vanden 1 cuad},
\begin{equation}
\label{eq: E 1 vanden 1 cuad 2}
\E\left[\frac{1}{(\vanden-1)^2}\right]
= \left(\E\left[\frac{1}{\vanden-1}\right]\right)^2 + \Var\left[\frac{1}{\vanden-1}\right]
< \frac{p^2}{(\nnum-1)(\nnum-2)}.
\end{equation}
Substituting \eqref{eq: E 1 vanden 1} and \eqref{eq: E 1 vanden 1 cuad 2} into \eqref{eq: risk MSE}, the desired result \eqref{eq: MSE gar} is obtained.
%\qed
\end{proof}

% Finally, Lemma~\ref{lemma: conf gen gar} is used in the proof of Proposition~\ref{prop: conf gen2 gar}.
% *·*

\begin{lemma}
\label{lemma: conf gen gar}
Given $\nnum \geq 3$ and $\Omega \in \mathbb R^+$, considering the loss function \eqref{eq: L conf gen} with $A_1=0$, $A_2 > 0$, if $\fdos \geq (\nnum+\sqrt\nnum+1)/\Omega$ the risk of the estimator \eqref{eq: hatvap Omega vanden mas 1} satisfies $\risk(p) < \riskas$ for any $p \in (0,1)$. Similarly, for the loss function \eqref{eq: L conf gen} with $A_1 > 0$, $A_2=0$, if $\funo \geq \Omega/(\nnum-\sqrt\nnum)$ the inequality $\risk(p) < \riskas$ holds for any $p \in (0,1)$.
\end{lemma}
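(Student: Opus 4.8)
The plan is to reduce each of the two claims to one of the two one-sided tail estimates that, as noted in the discussion preceding \eqref{eq: hatvap Omega vanden mas 1}, are established separately inside the proof of \citet[proposition~3]{Mendo08c_env}. In both cases the mechanism is identical: having set one of $A_1,A_2$ to zero in \eqref{eq: L conf gen}, I would identify $\risk(p)$ and its asymptotic value $\riskas$ with, respectively, a single tail probability of $\vanden$ and the limit of that probability as $p \rightarrow 0$, and then invoke the corresponding bound from that reference.

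First I would treat \eqref{eq: L conf gen} with $A_1 = 0$, $A_2 > 0$, so that $L(x) = A_2$ for $x < 1/\fdos$ and $L(x) = 0$ otherwise. Since the estimator \eqref{eq: hatvap Omega vanden mas 1} is non-randomized, \eqref{eq: risk non-rand} gives $\risk(p) = A_2 \Pr[\hatvap < p/\fdos]$, while \eqref{eq: riskas nu}, together with the observation that $L(\Omega/\nu) = A_2$ precisely when $\nu > \Omega\fdos$, gives $\riskas = A_2 \int_{\Omega\fdos}^\infty \phi(\nu)\,\diff\nu$. It then suffices to quote from the proof of \citet[proposition~3]{Mendo08c_env} the estimate $\Pr[\hatvap < p/\fdos] < \int_{\Omega\fdos}^\infty \phi(\nu)\,\diff\nu$, valid for all $p \in (0,1)$ under $\nnum \geq 3$ and $\fdos \geq (\nnum+\sqrt\nnum+1)/\Omega$, and to multiply through by $A_2 > 0$. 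The case $A_2 = 0$, $A_1 > 0$ is symmetric: here $L(x) = A_1$ for $x > \funo$ and $0$ otherwise, so $\risk(p) = A_1 \Pr[\hatvap > p\funo]$ and, since $L(\Omega/\nu) = A_1$ exactly when $\nu < \Omega/\funo$, $\riskas = A_1 \int_0^{\Omega/\funo} \phi(\nu)\,\diff\nu$ (well defined since $\nnum - \sqrt\nnum > 0$ for $\nnum \geq 3$); the complementary estimate $\Pr[\hatvap > p\funo] < \int_0^{\Omega/\funo} \phi(\nu)\,\diff\nu$ from the same reference, valid when $\funo \geq \Omega/(\nnum-\sqrt\nnum)$, yields the conclusion after multiplying by $A_1 > 0$.

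The step I expect to require the most care is the extraction of these two one-sided bounds from \citet[proposition~3]{Mendo08c_env}, which is stated for the loss function \eqref{eq: L conf}: there the risk equals $\Pr[\hatvap < p/\fdos] + \Pr[\hatvap > p\funo]$ and its asymptotic value equals $\int_{\Omega\fdos}^\infty \phi(\nu)\,\diff\nu + \int_0^{\Omega/\funo} \phi(\nu)\,\diff\nu$, so one must verify that the argument there bounds each summand individually, and strictly, rather than only their sum. This is precisely the observation recorded just before the present lemma, and it is the only ingredient beyond routine bookkeeping. Were a self-contained proof preferred, the substantive work would instead be reproducing the Chernoff-type bounds on the negative-binomial law of $\vanden$ that produce the thresholds $(\nnum+\sqrt\nnum+1)/\Omega$ and $\Omega/(\nnum-\sqrt\nnum)$ — routine but lengthy, which is why I would rely on the cited reference.
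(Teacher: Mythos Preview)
Your proposal is correct and follows essentially the same approach as the paper: the paper's proof is a single sentence stating that the results follow from the arguments in the proof of \citet[proposition~3]{Mendo08c_env}, which is precisely the reduction you describe. Your write-up simply makes explicit the bookkeeping (identifying $\risk(p)$ and $\riskas$ with the one-sided tail probabilities and their asymptotic values) that the paper leaves implicit.
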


\begin{proof}
The stated results follow from the arguments used in the proof of \citet[proposition~3]{Mendo10}.
%\qed
\end{proof}

\begin{proof}[Proof of Proposition~\ref{prop: conf gen2 gar}]
%\paragraph{Proof of Proposition~\ref{prop: conf gen2 gar}}
The result will be proved by approximating the loss function as a sum of terms of the form \eqref{eq: L conf gen} with $A_1, A_2 \geq 0$ and using Lemma~\ref{lemma: conf gen gar}. It may be assumed without loss of generality that $L(x)=0$ for $x \in [\upsilon, \upsilon']$, because if $L(x)=C$ within that interval, defining $L'(x)=L(x)-C$ the risk corresponding to $L$ is expressed as $C$ plus the risk resulting from the loss function $L'$, which satisfies the hypotheses of the proposition.

Let $\epsilon>0$, and suppose for the moment that $L$ is unbounded on the interval $(0,\upsilon)$. This implies that for any $i \in \mathbb N$, the set $V_{\epsilon,i}=\{x \in (0,\upsilon) \st L(x) \geq i\epsilon\}$ is non-empty. In fact, since $L$ is non-increasing on $(0,\upsilon)$, $V_{\epsilon,i}$ is an interval. Let $x_{\epsilon,i}$ be defined as the supremum of $V_{\epsilon,i}$, and let
\begin{equation}
\ell_{\epsilon,i}(x) =
\begin{cases}
\epsilon & \text{if } x \leq x_{\epsilon,i}, \\
0 & \text{otherwise}.
\end{cases}
\end{equation}
If $L$ is bounded on $(0,\upsilon)$, the sets $V_{\epsilon,i}$ are empty for $i$ greater than a certain value. In this case, the corresponding $\ell_{\epsilon,i}$ functions are defined as the null function. In a similar manner, for $L$ unbounded on $(\upsilon',\infty)$, let $V'_{\epsilon,i}=\{x \in (\upsilon',\infty) \st L(x) \geq i\epsilon\}$, which is again non-empty interval; let $x'_{\epsilon,i}$ be its infimum, and
\begin{equation}
\label{eq: L' epsilon i}
\ell'_{\epsilon,i}(x) =
\begin{cases}
\epsilon & \text{if } x \geq x'_{\epsilon,i}, \\
0 & \text{otherwise}.
\end{cases}
\end{equation}
If $L$ is bounded on $(\upsilon',\infty)$, for $i$ greater than a certain value the sets $V'_{\epsilon,i}$ are empty, and the corresponding $\ell'_{\epsilon,i}$ are defined as null. Let $L_{\epsilon,i}(x) = \ell_{\epsilon,i}(x) + \ell'_{\epsilon,i}(x)$ and $L_\epsilon(x) = \sum_{i=1}^\infty L_{\epsilon,i}(x)$. By construction, for all $x \in \mathbb R^+$,
\begin{equation}
\label{eq: L aprox epsilon}
0 \leq L(x)-L_\epsilon(x) \leq \epsilon.
\end{equation}

Each function $L_{\epsilon,i}$ satisfies Assumptions~\ref{assum: L var acot}--\ref{assum: L cota O},
% *!* \ref{assum: L var acot}, \ref{assum: L disc fin compacto} and \ref{assum: L cota O}
and therefore a risk can be defined considering $L_{\epsilon,i}$ as the loss function. This risk will be denoted as $\risk_{\epsilon,i}(p)$. The function $L_\epsilon$ also satisfies Assumptions~\ref{assum: L var acot}--\ref{assum: L cota O}.
% *!* \ref{assum: L var acot}, \ref{assum: L disc fin compacto} and \ref{assum: L cota O}
Let $\risk_\epsilon(p)$ denote its corresponding risk,
\begin{equation}
\label{eq: risk epsilon 1}
\risk_\epsilon(p) = \sum_{\nden=\nnum}^\infty f(\nden) L_\epsilon(\fest(\nden)/p)
= \sum_{\nden=\nnum}^\infty \sum_{i=1}^\infty f(\nden) L_{\epsilon,i}(\fest(\nden)/p)
\end{equation}
For each $\nden$, the inner series in \eqref{eq: risk epsilon 1} converges absolutely; namely, to $f(\nden) L_\epsilon(\fest(\nden)/p)$. In addition, from \eqref{eq: L aprox epsilon} it is seen that $L_\epsilon(\fest(\nden)/p) \leq L(\fest(\nden)/p)$, and this implies that the outer series in \eqref{eq: risk epsilon 1} is also absolutely convergent. This allows interchanging the sums over $\nden$ and $i$ \citep[theorem 8.43]{Apostol74}, which gives
\begin{equation}
\label{eq: risk epsilon 2}
\risk_\epsilon(p) = \sum_{i=1}^\infty \risk_{\epsilon,i}(p).
\end{equation}

Theorem~\ref{teo: Feln incl Felp} assures that $\risk_{\epsilon,i}(p)$ has an asymptotic value $\riskas_{\epsilon,i}$, given by
\begin{equation}
\riskas_{\epsilon,i} = \int_0^\infty \phi(\nu) L_{\epsilon,i}(\Omega/\nu) \,\diff\nu, \\
\end{equation}
Similarly, $\risk_\epsilon(p)$ has an asymptotic value
\begin{equation}
\label{eq: riskas epsilon}
\riskas_\epsilon = \int_0^\infty \phi(\nu) \sum_{i=1}^\infty L_{\epsilon,i}(\Omega/\nu) \,\diff\nu.
\end{equation}
Since $L_{\epsilon,i}$ is a nonnegative function for all $i$, the monotone convergence theorem \citep[theorem 2.3.4]{Athreya06} implies that the sum and integral signs in \eqref{eq: riskas epsilon} commute, and thus
\begin{equation}
\label{eq: riskas epsilon 1}
\riskas_\epsilon = \sum_{i=1}^\infty \riskas_{\epsilon,i}.
\end{equation}
From Lemma~\ref{lemma: conf gen gar}, $\risk_{\epsilon,i}(p) < \riskas_{\epsilon,i}$. Combined with \eqref{eq: risk epsilon 2} and \eqref{eq: riskas epsilon 1}, this gives
\begin{equation}
\label{eq: risk epsilon riskas epsilon}
\risk_\epsilon(p) < \riskas_\epsilon.
\end{equation}
On the other hand, from \eqref{eq: L aprox epsilon} it stems that
\begin{equation}
\label{eq: risk risk epsilon}
0 \leq \risk(p)-\risk_\epsilon(p) \leq \epsilon,
\end{equation}
which in turn implies
\begin{equation}
\label{eq: riskas riskas epsilon}
0 \leq \riskas-\riskas_\epsilon \leq \epsilon.
\end{equation}
From \eqref{eq: risk epsilon riskas epsilon}--\eqref{eq: riskas riskas epsilon},
% *!* \eqref{eq: risk epsilon riskas epsilon}, \eqref{eq: risk risk epsilon} and \eqref{eq: riskas riskas epsilon}
\begin{equation}
\label{eq: risk riskas}
\risk(p) \leq \risk_\epsilon(p) + \epsilon < \riskas_\epsilon + \epsilon  < \riskas + \epsilon.
\end{equation}
Since \eqref{eq: risk riskas} holds for $\epsilon$ arbitrary, the desired inequality $\risk(p) \leq \riskas$ follows.
%\qed
\end{proof}

%\bibliographystyle{elsarticle-harv}
%\bibliography{c:/Users/lmendo/bibliogr/strings_gen,c:/Users/lmendo/bibliogr/strings_months,c:/Users/lmendo/bibliogr/lista,c:/Users/lmendo/bibliogr/lista_sin,lista_local}

\end{document}